\crefname{hypothesis}{Hypothesis}{Hypotheses}
\crefname{example}{Example}{Examples}
\crefname{fact}{Fact}{Facts}
\title{Numerically Efficient and Stable Algorithms for Kernel-Based Regularized System Identification using Givens-Vector Representation%
\thanks{\funding{This work was funded by 
NSFC under contract No.\ 62273287, Shenzhen Science and Technology Innovation Commission under contract No.\ JCYJ20220530143418040, 
the Novo Nordisk Foundation under contract No.\ NNF20OC0061894, the Science and Technology Ph.D.\ Research Startup Project under contract No.\ SZIIT2025KJ060,
and the Postgraduate studentships of The Chinese University of Hong Kong.}}}
\author{Zhuohua Shen\thanks{Department of Statistics and Data Science, The Chinese University of Hong Kong, Hong Kong, China (\email{zhuohuashen@link.cuhk.edu.hk}).}
\and Junpeng Zhang\thanks{School of Artificial Intelligence, Shenzhen University of Information Technology, Shenzhen, China (\email{junpengzhang@link.cuhk.edu.cn}).}
\and Martin S.\ Andersen\thanks{Department of Applied Mathematics and Computer Science, Technical University of Denmark, Lyngby, Denmark (\email{mskan@dtu.dk})}
\and Tianshi Chen\thanks{School of
Data Science and Shenzhen Research Institute of Big Data, The Chinese University of Hong Kong, Shenzhen, Shenzhen, China (\email{tschen@cuhk.edu.cn}).}}
\DeclareMathOperator{\diag}{diag}
\def\N{\mathbb{N}}
\def\R{\mathbb{R}}
\def\b{\boldsymbol b}
\def\c{\boldsymbol c}
\def\d{\boldsymbol d}
\def\e{\boldsymbol e}
\def\f{\boldsymbol f}
\def\g{\boldsymbol g}
\def\p{\boldsymbol p}
\def\q{\boldsymbol q}
\def\r{\boldsymbol r}
\def\s{\boldsymbol s}
\def\u{\boldsymbol u}
\def\w{\boldsymbol w}
\def\x{\boldsymbol x}
\def\y{\boldsymbol y}
\def\z{\boldsymbol z}
\def\boldalpha{\boldsymbol \alpha}
\def\boldchi{\boldsymbol \chi}
\def\boldeta{\boldsymbol \eta}
\def\boldmu{\boldsymbol \mu}
\def\boldnu{\boldsymbol \nu}
\def\0{\boldsymbol{0}}
\def\1{\boldsymbol{1}}
\def\calG{\mathcal{G}}
\def\calH{\mathcal{H}}
\def\calK{\mathcal{K}}
\def\calO{\mathcal{O}}
\def\calS{\mathcal{S}}
\def\calX{\mathcal{X}}
\def\hv{\hat{\nu}}
\def\hbv{\hat{\boldnu}}
\def\tbv{\tilde{\boldnu}}
\def\abs#1{\lvert #1 \rvert}
\def\inp#1#2{\left\langle #1,#2 \right\rangle}
\def\tril{\mathrm{tril}}
\def\triu{\mathrm{triu}}
\def\tr{\mathrm{tr}}
\def\diag{\mathrm{diag}}
\def\DC{\mathrm{DC}}
\def\TC{\mathrm{TC}}
\def\SS{\mathrm{SS}}
\def\EB{\mathrm{EB}}
\def\SURE{\mathrm{SURE}}
\def\GCV{\mathrm{GCV}}
\def\GML{\mathrm{GML}}
\def\logdet{\mathrm{logdet}}
\def\rank{\mathrm{rank}}
\def\rmL{\mathrm{L}}
\def\rmR{\mathrm{R}}
\def\rmd{\mathrm{d}}
\def\rmD{\mathrm{D}}
\def\MRef{\mathsf{Ref}}
\def\GR{\mathsf{GR}}
\def\GvR{\mathsf{GvR}}
\def\GvRt{\mathsf{GvRt}}
\def\GRs{\mathsf{GRs}}
\def\SNR{\mathrm{SNR}}
\def\fit{\mathrm{fit}}
\def\High{\mathsf{High}}
\def\norm#1{\lVert #1 \rVert}
\def\abs#1{\left\lvert #1 \right\rvert}
\def\sbk#1{\left( {#1}\right)}
\def\mbk#1{\left[ {#1}\right]}
\def\cbk#1{\left\{ {#1}\right\}}
\def\argmin#1{\mathop{\arg\min}\limits_{#1}}
\begin{document}

\maketitle
% REQUIRED
\begin{abstract}
Numerically efficient and stable algorithms are essential for kernel-based regularized system identification. 
The state of art algorithms exploit the semiseparable structure of the kernel and are based on the generator representation of the kernel matrix. 
However, as will be shown from both the theory and the practice, the algorithms based on the generator representation are sometimes numerically unstable, which limits their application in practice. 
This paper aims to address this issue by deriving and exploiting an alternative Givens-vector representation of some widely used kernel matrices. Based on the Givens-vector representation, we derive algorithms that yield more accurate results than existing algorithms without sacrificing efficiency.
We demonstrate their usage for the kernel-based regularized system identification.  
Monte Carlo simulations show that the proposed algorithms admit the same order of computational complexity as the state-of-the-art ones based on generator representation, but without issues with numerical stability.
\end{abstract}

\begin{keywords}
Numerical linear algebra, rank structured matrices, system identification
\end{keywords}

% REQUIRED
\begin{MSCcodes}
65F05, 93B30, 65F22, 65C20
\end{MSCcodes}

\vspace{-1mm}

\section{Introduction}\label{sec:intro}

The class of semiseparable matrices is one of the most widely used classes of rank structured matrices, and has applications in many fields, e.g.\ integral equations \cite{greengard1991numerical}, statistics \cite{greenberg1959matrix,keiner2009fast}, Gaussian process regression \cite{foreman2017fast,andersen2020smoothing}, and \textit{kernel-based regularized system identification} (KRSysId) \cite{andersen2020smoothing,CA21}.  
Specifically, a symmetric matrix $K\in\R^{N\times N}$ is a so-called \textit{(extended) $p$-generator representable semiseparable} ($p$-GRS) (see \Cref{def:gen-p-semi}) with $p\in\mathbb N$, if its lower-triangular part $\tril(K)$ has the form
\begin{align*}
    \tril(K) = \tril(U V^T), \qquad U,V\in\R^{N\times p}, \ p\leq N, 
\end{align*}
where $U,V$ are called the generators of $K$ and comprise its \textit{generator representation} (GR).
As is well known from \cite{Vandebril2008a,GoVa13}, operation with $p$-GRS matrices can be performed efficiently via their GR. For example, given a vector $\x\in\R^N$, the cost of computing $K\x$ can be reduced from $\calO(N^2)$ to $\calO(Np)$ floating-point operations (flops).
For KRSysId \cite{CA21}, the bottleneck of computation is the matrix operations (see \Cref{tab:krm_algorithms}) related to the kernel matrix $K_{\boldeta}\in\R^{N\times N}$ in 
\eqref{eq:Psi_M_H_unitImp} and the regression matrix $M_{\boldeta,\gamma}=\Psi_{\boldeta}+\gamma I_N$ in \cref{eq:sysid_rkhs_regmat}, 
where $\Psi_{\boldeta}\in\R^{N\times N}$ is the output kernel matrix \cref{eq:output_kernel_mat}, $\boldeta\in\R^m$ with $m\in\mathbb N$ is the hyper-parameter, $\gamma>0$ is the regularization parameter, and $I_N$ is the $N$-dimensional identity matrix.
For example, for a given output vector $\y\in\R^{N}$, straightforward computation of $M_{\boldeta,\gamma}^{-1}\y$ in \cref{eq:sysid_rkhs_alpha} and $\y^TM_{\boldeta,\gamma}^{-1}\y$,  $\logdet(M_{\boldeta,\gamma})$, and $\tr(M_{\boldeta,\gamma}^{-1})$ in \cref{eq:EB_opt,eq:SURE_opt,eq:GCV_opt,eq:GML_opt} requires $\calO(N^3)$ flops.
Fortunately, if $\Psi_{\boldeta}$ and $K_{
\boldeta}$ are $p$-GRS, the computational complexity can be reduced to $\calO(Np^3)$ through the GR-based  algorithms derived in \cite{andersen2020smoothing,CA21}.
However, the GR-based algorithms have numerical stability issues. 
To illustrate this, we outline two simple examples below; further details can be found in \Cref{sec:two_eg}.

1. The GR may exhibit diverging scales, i.e., the entries of $U$ and $V$ may grow or decay exponentially even when the entries of $K_{\boldeta}$ are moderate. In finite precision arithmetic, this may lead to overflow/underflow and instability in the computation of $K_{\boldeta}\x$ using GR-based algorithms; see \cite[Example 2]{vandebril2005note}.
        For example, consider the diagonal correlated (DC kernel) \cite{COL2012}, which we define in \cref{eq:gen_DC}, with parameters $c=1$, $\lambda =10^{-1}$, $\rho = 10^{-7}$, $N=5$, and $t_i=i$ for $i=1,\dots,5$. This yields a kernel matrix $K_{\boldeta}^{\DC}=\tril(UV^T)+\tril(VU^T,1)$ with generators 
        \begin{align*} 
          U &= \begin{bmatrix} 10^{-8} & 10^{-16} & 10^{-24} & 10^{-32} & 10^{-40}\end{bmatrix}^T \\
          V &= \begin{bmatrix}10^6 & 10^{12}&10^{18}&10^{24}&10^{30}\end{bmatrix}^T.
        \end{align*}
        When applying \cite[Algorithm 4.1]{andersen2020smoothing} to compute  $\y=K_{\boldeta}\x$,
        the entries of $\y$ span an enormous range, e.g.,
        $y_5 =10^{-40} ([10^6, 10^{12}, 10^{18}, 10^{24}, 10^{30}]^T \x)$.
        If we take $\x=[-1,1,-1,1,-1]^T$ and compute in double precision, then the relative error is of order $10^7$ despite the problem of evaluating $K_{\boldeta}\x$ being well conditioned.

2. Efficient GR-based algorithms for computing the inverse of the Cholesky factor $L_{\boldeta,\gamma}$ of $M_{\boldeta,\gamma}$ face numerical instability when $\gamma > 0$ is small, even if $K_{\boldeta}$ is well conditioned. Given a GR $(U,V)$ of $K_{\boldeta}$, by \cite[Theorem 4.1, Algorithms 4.3 and 4.4]{andersen2020smoothing}, the Cholesky factor $L_{\boldeta,\gamma}$ can be written as $L_{\boldeta,\gamma} = \tril(UW^T,-1)+\diag(\c)$ with GR $(U,W)$, and its inverse can be written as $L_{\boldeta,\gamma}^{-1} = \tril(YZ^T,-1)+\diag(\c)^{-1}$ with GR $(Y,Z)$, where $Y=L_{\boldeta,\gamma}^{-1}U$ and $Z=L_{\boldeta,\gamma}^{-T}W(Y^TW-I_2)^{-1}$. 
To illustrate, consider the stable spline (SS) kernel \cite{COL2012}, which we define in \cref{eq:gen_SS}, with parameters $c=1$, $\rho=0.5$, $N=5$, and $t_i=i$ for $i=1,\dots,5$. The condition numbers of $M_{\boldeta,\gamma}$ and $Y^T W - I_2$ are $\kappa(M_{\boldeta,\gamma})\approx 10^4$ and $\kappa(Y^T W - I_2)\approx 10^{16}$, respectively, leading to an inaccurate GR $(Y,Z)$ with the relative error of $Z$ being of order 1 in double precision.
Moreover, even with an accurate GR $(Y,Z)$, we may not be able to accurately compute, for $1\leq j < i \leq 5$, the $(i,j)$-entry $\y_i^T\z_j$ of $\tril(L_{\boldeta,\gamma}^{-1},-1)$, where $\y_i,\z_j\in\R^2$ are the $i$th and $j$th column of $Y^T$ and $Z^T$, respectively.
The reason is that the relative condition numbers \cite[Section~3]{higham2002accuracy} associated with the inner product $\y_i^T\z_j$  when computing $\tril(L_{\boldeta,\gamma}^{-1},-1)$ can be extremely large, up to order $10^{16}$; see \cref{eq:two_eg_cond_inner} for all the relative condition numbers. 
As a result, even if $(Y,Z)$ is accurate to double precision, the entries of $\tril(L_{\boldeta,\gamma}^{-1},-1)$ cannot be reliably computed.
Indeed, if we compute $(Y,Z)$ to 50 decimal digits of accuracy and round them to double precision, then the relative error of $\tril(L_{\boldeta,\gamma}^{-1},-1)$ is approximately $1.95$.

The above examples suggest that the numerical instability of the GR-based algorithms \cite{CL13,CA21,SXAC23,CCDA23} may limit their application in practice. 
To address this issue, we employ the Givens-vector representation (GvR) of $p$-semiseparable kernels (see \Cref{def:semi}) \cite{vandebril2005note,Vandebril2008a,GoVa13}, which offers better numerical stability, but the corresponding algorithms are generally more tedious to derive. 
To this end, we first derive the GvR for some widely used kernel matrices and their corresponding output kernel matrices for the KRSysId.
We then derive the GvR-based implementation of algorithms with computational complexity of $\calO(Np^2)$ flops.
In particular, we provide the GvR for the Cholesky factor $L_{\boldeta,\gamma}$ of $M_{\boldeta,\gamma}$ in \cref{eq:chol_L} as well as an implicit representation of $L_{\boldeta,\gamma}^{-1}$ in \cref{eq:chol_Linv,eq:inv_rep}. Notably, revisiting the two examples above by using GvR-based algorithms shows that, the relative errors of computing $\y=K_{\boldeta}\x$ via \Cref{alg:Ax} and reconstructing $\tril(L_{\boldeta,\gamma}^{-1},-1)$ via \cref{eq:chol_Linv,eq:inv_rep} are of orders $10^{-8}$ and $10^{-11}$, respectively, in double precision.
Moreover, we find a different route to compute $\tr(M_{\boldeta,\gamma}^{-1})$ with $\calO(Np^2)$ flops.
In contrast, the algorithm proposed in \cite{andersen2020smoothing} requires computing the implicit representation of $L_{\boldeta,\gamma}^{-1}$ and takes  $\calO(Np^3)$ flops. 
Finally, we apply our proposed GvR-based algorithms for the KRSysId, and we show through numerical simulations that our proposed implementation outperforms the state-of-the-art GR-based ones in both the numerical stability and efficiency.

In \Cref{sec:background}, we briefly review the KRSysId theory under \textit{reproducing kernel Hilbert space} (RKHS) framework. 
In \Cref{sec:semiseparable}, we introduce semiseparable matrices, GR, and GvR, and derive the GvR of some commonly used kernel matrices and output kernel matrices in the KRSysId. In \Cref{sec:alg}, we provide  GvR-based efficient implementation of algorithms.
In \Cref{sec:experiments}, we run numerical simulations to illustrate the numerical stability, efficiency and accuracy of the proposed algorithms, and finally, we conclude the paper in \Cref{sec:conclusions}.

\section{Background and related work}\label{sec:background}
In this section, we first briefly list notations used throughout the paper, then introduce some necessary background  materials about the KRSysId, and finally,  we review the state-of-the-art implementation of algorithms in the KRSysId.

\subsection{Notation}
Let $\R_+=[0,\infty)$ and $\N=\{1,2,\cdots\}$.
Let $\R_+^N$, and $\R_{++}^N$ be the set of nonnegative $N\times 1$ real vectors, and $N\times 1$ elementwise positive vectors, respectively. 
Let $\e_i\in\R^N$ be the vector of zeros except $1$ in the $i$th entry. 
Denote $\1_N$, $\0_N$, and $\0_{N\times m}$ the $N\times 1 $ vector of ones, $N\times 1 $ vector of zeros, and $N\times m$ matrix of zeros, respectively, where the subscript $N$ is dropped when there is no confusion. 
For vector $\x\in\R^N$, let $x_i$ be the $i$th element of $\x$.    
For $A\in\R^{N\times N}$, denote $A_{i,j}$ or $A(i,j)$ its $(i,j)$-entry, $A(i:j,i':j')$ the MATLAB-like sub-block of $A$ from the $i$th row to the $j$th row and from the $i'$th column to the $j'$th column. 
Denote $\tril(A,k)$ ($\triu(A,k)$) the matrix with all elements above (below) the $k$th superdiagonal being zero and let $\tril(A)=\tril(A,0)$ and $\triu(A)=\triu(A,0)$. 
Given $\d\in\R^N$, $\diag(\d)\in\R^{N\times N}$ is a diagonal matrix with $\d$ as its diagonal part. 
Let $\1(\cdot)$ be the indicator function. 
For $A\in\R^{N\times N}$, $A\succ\0$ means that $A$ is positive definite.   
For sequence $\{S_i\}_{i\in\N}$ where $S_i\in\R^{p\times p}$ for $p\in\N$, define the multiple product $S_{i:j}^{>}=\prod_{k=0}^{i-j}S_{i-k}$ for $i\geq j$ and $I_p$ for $i<j$.

\subsection{Kernel-based regularized system identification (KRSysId)}\label{sec:krm}

In the past decade, the \textit{kernel-based regularized method} has emerged and gradually become a new paradigm for system identification \cite{LCM20}. 

Consider a linear time-invariant (LTI), causal, and stable system described by 
\begin{align}\label{eq:LTI}
    y(t)=(g*u)(t)+\varepsilon(t), \quad t\geq 0,
\end{align}
where $y(t)\in\R$, $u(t)\in\R$, $g(t)$, and $\varepsilon(t)\in\R$ are called the measurement output, input, impulse response, and disturbance of the system at time $t$, respectively, and the convolution of $g$ and $u$ is defined as
\begin{align*}
   (g*u)(t)=\left\{\begin{array}{cc}
    \sum_{\tau=0}^\infty g(\tau)u(t-\tau),\ t\in \{0\}\cup \N, & \quad \text{discrete-time (DT)}, \\
    \int_0^\infty g(\tau)u(t-\tau) d\tau,\ t\in\R_+, & \quad \text{continuous-time (CT)}. 
    \end{array}\right.
\end{align*}
$\varepsilon(t)$ is assumed to be independent and identically distributed (i.i.d) with mean zero and variance $\sigma^2$ and independent of $u(t)$. The goal of identification is to estimate the impulse response $g(t)$ based on $\y=[y(t_1)\ \cdots \ y(t_N)]^T$, and $\{u(t):t\in\R_+\}$ for the CT case, and $\{u(t_i):i\in\{0\}\cup \N \}$ for the DT case with $t_i=i$. In the calculation of $(g*u)(t)$, it is common to assume that $u(t)=0$ when $t<0$.

The KRM can be equivalently formulated in a couple of different ways \cite{pillonetto2014kernel,PCCDL22}. Here, it is
formulated as a function estimation problem in an RKHS determined by a positive semidefinite \textit{kernel} function.
To be specific, we need to first recall some definitions in relation to RKHS.
An RKHS $\calH$ over a nonempty function domain $\calX$ is a Hilbert space of functions $f:\calX\to\R$ equipped with norm $\norm{\cdot}_{\calH}$ such that all the evaluators $f\mapsto f(x)$ are linear and bounded over $\calH$ \cite{aronszajn1950theory,wahba1990spline,halmos2017introduction}.
It can be shown that there exists a unique positive semidefinite kernel $\calK:\calX\times\calX\to\R$ such that $\calK(x,\cdot)\in\calH$, $\calK(x_i,x_j)=\calK(x_j,x_i)$, and $\sum_{i,j=1}^N a_i a_j \calK(x_i,x_j)\geq 0$ for any $N\in\N$, $x_i,x_j\in\calX$ and $a_i,a_j\in\R$, and moreover, the following so-called reproducing property holds: $\inp{\calK(x,\cdot)}{f}_{\calH}=f(x)$ for all $(x,f)\in(\calX,\calH)$,
where $\langle\cdot,\cdot\rangle_{\calH}$  is the inner product of $\calH$ \cite{aronszajn1950theory}.
Conversely, given a positive semidefinite kernel $\calK:\calX\times\calX\to\R$, it can be shown by the Moore--Aronszajn theorem \cite{aronszajn1950theory} that 
there exists a unique RKHS on $\calX$ for which $\calK$ is its reproducing kernel.

For KRM, we first assume that a positive semidefinite kernel $\calK(t,s;\boldeta)$ has been carefully designed to embed the prior knowledge of the underlying system to be identified, where $\boldeta\in\R^m$ is a hyper-parameter vector. In the DT case, we take $\calX=\{0\}\cup \N$, and in the CT case, we take $\calX=\R_+$. 
Then, we let $\calH$ be the RKHS induced by this kernel and take $\calH$ to the hypothesis space in which we will search for the impulse response $g$. Furthermore, we estimate
the impulse response $g$ by minimizing the following regularized least squares criterion
\begin{align}\label{eq:sysid_rkhs}
   \hat{g}=\argmin{g\in\calH } \sum_{i=1}^{N}(y(t_i)-(g*u)(t_i))^2+\gamma \norm{g}_{\calH}^{2},
\end{align}
where $L_t[g]=(g*u)(t)$ is a linear and bounded functional $L_t \colon \calH\to\R$, $\norm{ \cdot }_{\calH}$ is the norm of $\calH$, and $\gamma>0$ is a regularization parameter, which is also regarded as a hyper-parameter.
The representer theorem \cite{wahba1990spline,pillonetto2014kernel,PCCDL22}
shows that  the solution of \cref{eq:sysid_rkhs} has the form
\begin{align}
  \hat{g}(t)=\sum_{i=1}^N \hat{\alpha}_i\bar{a}_i(t;\boldeta) \label{eq:sysid_rkhs_g}, \quad \hat{\boldalpha}=[\hat{\alpha}_1 \ \cdots \ \hat{\alpha}_N]^T,
\end{align}
for some coefficients $\hat{\alpha}_i\in\R$ and the representer $\bar{a}_i$ of $L_{t_i}$ with $L_{t_i}[g]=\inp{g}{\bar{a}_i}_{\calH}$ for all $i$ and $g\in\calH$, and
\begin{align*}
   \bar{a}_i(t;\boldeta)&=\bar{a}(t,t_i;\boldeta)=L_{t_i}[\calK(\cdot,t;\boldeta)]=(\calK(t,\cdot;\boldeta)*u)(t_i) \\
  &=\begin{cases}
       \sum_{\tau=0}^\infty\calK(t,\tau;\boldeta)u(t_i-\tau),& t,t_i\in\{0\}\cup \N, \  \ \quad\text{(DT)},\\
       \int_0^\infty\calK(t,\tau;\boldeta)u(t_i-\tau)\rmd\tau,& t,t_i\in\R_+, \quad\text{(CT)},
   \end{cases}
\end{align*}
By the relation
\begin{align*}
    L_{t_i}[L_{t_j}[\calK]]=L_{t_i}[\bar{a}_j]&=\inp{\bar{a}_j}{\bar{a}_i}_{\calH}=\inp{\bar{a}_i}{\bar{a}_j}_{\calH}=L_{t_j}[\bar{a}_i]=L_{t_j}[L_{t_i}[\calK]],
\end{align*}
plugging \cref{eq:sysid_rkhs_g} into \cref{eq:sysid_rkhs} gives an equivalent problem of \cref{eq:sysid_rkhs} as follows
\begin{align}\label{eq:sysid_rkhs_regmat0}
\hat{\boldalpha}=\argmin{\boldalpha\in\R^N}\sum_{i=1}^N\sbk{
    y(t_i)-\sum_{j=1}^N \alpha_j\inp{\bar{a}_{j}}{\bar{a}_{i}}_{\calH}
    }^2 + \gamma \sum_{i=1}^N \sum_{j=1}^N \alpha_i \alpha_j \inp{\bar{a}_{i}}{\bar{a}_{j}}_{\calH}.
\end{align}
Let the \textit{output kernel matrix} and \textit{output kernel} \cite{pillonetto2010new,pillonetto2014kernel,CA21,PCCDL22} be
\begin{align}
    \Psi_{\boldeta}&=(\Psi(t_i,t_j;\boldeta))_{1\leq i,j\leq N}=(\inp{\bar{a}_i}{\bar{a}_j}_{\calH})_{1\leq i,j\leq N}, \label{eq:output_kernel_mat} \\
    \Psi(t,t';\boldeta)&=\begin{cases}
        \sum_{s=0}^{\infty}\sum_{r=0}^{\infty}\calK(s,r;\boldeta)u(t-s)u(t'-r),\ t,t'\in\{0\}\cup \N,&\ (\text{DT}),\\
        \int_0^\infty\int_0^\infty \calK(s,r;\boldeta)u(t-s)u(t'-r)\rmd r\rmd s,\ t,t'\in\R_+&\ (\text{CT}),
    \end{cases}\label{eq:output_kernel}
\end{align}respectively.
Then \eqref{eq:sysid_rkhs_regmat0} becomes
\begin{align}
  \hat{\boldalpha}
  &=\argmin{\boldalpha\in\R^N}\ \norm{\y-\Psi_{\boldeta}\boldalpha}_2^2+\gamma \boldalpha^T\Psi_{\boldeta}\boldalpha=M_{\boldeta,\gamma}^{-1}\y, \label{eq:sysid_rkhs_alpha} \\
  M_{\boldeta,\gamma}&=\Psi_{\boldeta}+\gamma I_N.\label{eq:sysid_rkhs_regmat}
\end{align} 
Then, we have the fitted values $\hat{\y}=\Psi_{\boldeta}\hat{\boldalpha}=H_{\boldeta,\gamma}\y$, where $H_{\boldeta,\gamma}=\Psi_{\boldeta}M_{\boldeta,\gamma}^{-1}$ is the so-called \textit{influence matrix}, and the predicted output at time $t$ $\hat{y}(t)=(\hat{g}*u)(t)=\sum_{i=1}^N \hat{\alpha}_i\Psi(t,t_i;\boldeta)$.

It is interesting to note that the KRM includes the function estimation in RKHS, which is widely studied in the field of machine learning and statistics, e.g., \cite{wahba1990spline}, as a special case, when considering $u(t)$ to be the unit impulse signal. 
\begin{example}[Function estimation in RKHS]\label{ex:unitImp}
When $u(t)$ is the unit impulse signal, i.e., $u(t)$ is the Dirac delta for CT case and $u(t)=\1(t=0)$ for DT case, we have
$\Psi(t,t';\boldeta)=\calK(t,t';\boldeta)$ and $\hat{y}(t)=\hat{g}(t)$ for all $t,t'$,  then the model \cref{eq:LTI} and the regularized least squares criterion \cref{eq:sysid_rkhs} becomes
\begin{align*}
    y(t)&=g(t)+\varepsilon(t), \quad t\geq 0, \\
    \hat{g}&=\argmin{g\in\calH } \sum_{i=1}^{N}(y(t_i)-g(t_i))^2+\gamma \norm{g}_{\calH}^{2}.
\end{align*}
Let $K_{\boldeta}=(\calK(t_i,t_j;\boldeta))_{1\leq i,j\leq N}$ be the \textit{kernel matrix}, by
\begin{align}\label{eq:Psi_M_H_unitImp}
    \Psi_{\boldeta}=K_{\boldeta}, \ 
    M_{\boldeta,\gamma}=K_{\boldeta}+\gamma I_N, \ 
    H_{\boldeta,\gamma}=K_{\boldeta}M_{\boldeta,\gamma}^{-1},
\end{align}
the solution \eqref{eq:sysid_rkhs_alpha} become
\begin{align*}
    \hat{\boldalpha}&=\argmin{\boldalpha\in\R^N}\ \norm{\y-K_{\boldeta}\boldalpha}_2^2+\gamma \boldalpha^TK_{\boldeta}\boldalpha=M_{\boldeta,\gamma}^{-1}\y.
\end{align*}In particular, when $\calK(t_i,t_j;\boldeta)$ is taken to be the spline kernel,  the function estimation problem further becomes a special case of the so-called \textit{smoothing spline regression problem} without the inclusion of basis functions \cite{wahba1975smoothing,wahba1990spline}.
\end{example}

\subsection{Kernels and hyper-parameter estimation}\label{sec:hyper_parameter_est}
From a theoretical perspective, the major difficulty of KRM lies in the design of a suitable kernel $\calK(t,s;\boldeta)$ and also in the estimation of the hyper-parameters $(\gamma,\boldeta)$.
The issue of kernel design has attracted a lot of interests in the past decade, e.g., \cite{chen2018kernel,ZC18,CP18,BP2020,FC2024,Zorzi2024}. 
Commonly used kernels include the stable spline (SS) kernel \cite{COL2012}, the diagonal correlated  (DC) kernel, and the tuned-correlated (TC) kernel \cite{COL2012}:
\begin{subequations}
\begin{align}
  \calK^{\SS}(t,s;\boldeta^{\SS})&= c\frac{\rho^{(t+s)+\max\{t,s\}  } }{2} - c \frac{\rho^{ 3\max\{t,s\} }}{6}, \quad \boldeta^\SS=(c,\rho)\in \R \times (0,1),  \label{eq:SS}\\
  \calK^{\DC}(t,s;\boldeta^{\mathrm{DC}})&=c \lambda^{t+s}\rho^{\abs{t-s}}, \quad \boldeta^{\DC}=(c,\lambda,\rho)\in \R\times (0,1] \times (0,1), \label{eq:DC}\\
\calK^{\TC}(t,s;\boldeta^{\TC})&=c \rho^{(t+s)+\abs{t-s}},  \hspace*{1.5em} \boldeta^{\TC}=(c,\rho)\in \R\times (0,1) .\label{eq:TC}
\end{align}
\end{subequations}
Note that 
$\calK^{\TC}$ is a special case of $\calK^{\DC}$ by letting $\lambda=\rho$.

The issue of hyper-parameter estimation can be done by minimizing different criteria with respect to  hyper-parameter $(\gamma,\boldeta)$. Four widely used criteria are the empirical Bayes (EB), Stein's unbiased risk estimation (SURE) \cite{mu2018asym}, generalized cross validation (GCV) \cite{Golub1979,wahba1990spline}, and generalized maximum likelihood (GML) \cite{zhang2024asymptotic}, where the objectives are
\begin{subequations}
\begin{align}
    \EB(\gamma,\boldeta) &
  = \y^{T} M_{\boldeta,\gamma}^{-1} \y + \log\det(M_{\boldeta,\gamma}), \label{eq:EB_opt} \\
    \SURE(\gamma,\boldeta) &=  
    \norm{\y-\hat{\y}}^2 + 2\gamma \tr(H_{\boldeta,\gamma}) 
  ,\label{eq:SURE_opt} \\
      \GCV(\gamma,\boldeta) &=  \frac{\norm{\y-\hat{\y}}^2}{(1-\tr(H_{\boldeta,\gamma})/N)^2}=\frac{N^2\norm{\y-\hat{\y}}^2}{(\gamma\tr(M_{\boldeta,\gamma}^{-1}))^{2}},\label{eq:GCV_opt} \\
      \GML(\gamma,\boldeta)&= N\log(\y^T M_{\boldeta,\gamma}^{-1}\y) +\log\det(M_{\boldeta,\gamma})-N\log N, \label{eq:GML_opt}
\end{align}
\end{subequations}
where $1-\tr(H_{\boldeta,\gamma})/N=\gamma \tr(M_{\boldeta,\gamma}^{-1})/N$ is by matrix inversion lemma.
The kernel scaling factor $c$ and the noise variance $\sigma^2$ can be absorbed into the regularization parameter as $\gamma=\sigma^2/c$. Therefore, from a computational perspective, it suffices to take $c=1$ and consider the case presented in \Cref{sec:krm} with the regularization parameter $\gamma$.

From a practical perspective, the major difficulty of KRM lies in the computation of $\hat{\boldalpha}$ in \cref{eq:sysid_rkhs_alpha} and 
\cref{eq:EB_opt,eq:SURE_opt,eq:GCV_opt,eq:GML_opt}, summarized in \Cref{tab:krm_algorithms}. 
Since they include terms
$M_{\boldeta,\gamma}^{-1}\y$, 
$\y^T M_{\boldeta,\gamma}^{-1}\y=\y^T\hat{\boldalpha}$, $\logdet(M_{\boldeta,\gamma})$, 
$\y-\hat{\y}=\y-\Psi_{\boldeta}\hat{\boldalpha}$, $\tr(H_{\boldeta,\gamma})$, and $\tr(M_{\boldeta,\gamma}^{-1})$, 
a straightforward computation  requires $\calO(N^3)$ computational flops. 
Clearly, this is prohibitively expensive for large $N$ and thus it is interesting and important to develop efficient and stable algorithms to compute \cref{eq:sysid_rkhs_alpha} and 
\cref{eq:EB_opt,eq:SURE_opt,eq:GCV_opt,eq:GML_opt}.

\subsection{Related work}
There are two classes of numerically efficient implementation of algorithms for KRM:
the optimization based ones \cite{CL13,CA21,SXAC23,CCDA23,XFMC2024} and the full Bayesian one \cite{PL2023}. 
Here, we are interested in the first class, and the bottleneck is the computation of the hyper-parameter estimation criteria \cref{eq:EB_opt,eq:SURE_opt,eq:GCV_opt,eq:GML_opt}.
By assuming the FIR model with model order $n$,
\cite{CL13} proposed an algorithm with complexity $\calO(Nn^2+n^3)$ that avoids explicit matrix inversion by employing the QR factorization. In \cite{SXAC23}, by assuming the FIR model and the periodic input signal with period $q$, 
an algorithm with complexity $\calO(Nq+q^3+nqp'+nq^2)$ was proposed by exploiting the GR of $K_{\boldeta}$ and the hierarchically
semiseparable structure (HSS) of $\Psi_{\boldeta}$ \cite{Massei2020toolbox}, where $p'$ is the semiseparability rank of the kernel.
In \cite{CCDA23}, an algorithm with complexity $\mathcal{O}((N+n)\log(N+n)+nl^2)$ (provided that the number of function evaluations in the Bayesian optimization loop is fixed) was proposed, where $l$ is the rank of the randomized Nyström approximation.
The algorithm exploits the GR of $K_{\boldeta}$, and leverages stochastic trace estimation to compute $\log\det(M_{\boldsymbol{\eta},\gamma})$ and an iterative solver such as LSQR to compute $M_{\boldsymbol{\eta},\gamma}^{-1}\mathbf{y}$.

In \cite{CA21}, by assuming a class of widely used test input signals in system identification and automatic control, and considering model \cref{eq:LTI}, an algorithm with complexity $\calO(Np^3)$ was proposed by exploiting the GR of $K_{\boldeta}$ and $\Psi_{\boldeta}$, and leveraging the algorithms in \cite{Vandebril2008a,andersen2020smoothing}, where $p$ is the semiseparability rank of $\Psi_{\boldeta}$.
In \cite{XFMC2024}, by considering the frequency response model, an algorithm with complexity $\calO(r^2N(\log(N))^2)$ was proposed by exploiting the hierarchically off-diagonal low-rank (HODLR) structure of the output kernel matrix, where $r$ is the HOLDR rank \cite{Massei2020toolbox}. 

The above implementations, except \cite{XFMC2024}, are based on the GR of the kernel matrix and rely on GR-based algorithms,
which are numerically unstable in some cases \cite{vandebril2005note}, as illustrated in \Cref{sec:intro}.
The GvR-based algorithms to be introduced in the next section can effectively overcome this issue;
see \cite{vandebril2005note,vandebril2007matrix,Vandebril2008a} for a comprehensive overview.

\section{Semiseparable matrices}\label{sec:semiseparable}
\subsection{Generator representation}\label{sec:gen-givens}
The original definition of semiseparable matrices is the inverse of irreducible tridiagonal matrices (i.e., the subdiagonal elements are non-zero), which is also called one-pair matrix \cite{gantmacher2002oscillation,vandebril2005note}. 
Another commonly used definition of semiseparable matrices is based on generators \cite{chandrasekaran2000fast,van2004two,vandebril2005note,Vandebril2008a}, extending the semiseparability rank from $1$ in one-pair matrices to be higher than $1$, defined below, where we mainly focus on the symmetric case.
\begin{definition}\label{def:gen-p-semi}
Let $p\in\N$, a symmetric matrix $A\in\R^{N\times N}$ is said to be (extended) $p$-generator representable semiseparable ($p$-GRS) if 
\begin{align}
    A =  \tril(UV^T) + \triu(VU^T,1) \label{eq:gen-p-sym-semi},
\end{align}
where $U=\begin{bmatrix}
        \boldmu_1 & \cdots \boldmu_N
    \end{bmatrix}^T, \ 
    V=\begin{bmatrix}
        \boldnu_1 & \cdots & \boldnu_N
    \end{bmatrix}^T\in\R^{N\times p}$ with
$\boldmu_i,\boldnu_i\in\R^{p}$ (when $p=1$, we let $\boldmu_i=\mu_i,\ \boldnu_i=\nu_i$) are called \textit{generators} of $A$. 
For a general matrix $A\in\R^{N\times N}$, it is said to be (extended) $\{p,q\}$-GRS if $A=\tril(UV^T)+\triu(PQ^T,1)$ for some $U,V\in\R^{N\times p}$ and $P,Q\in\R^{N\times q}$.  
The $(i,j)$-entry of $A$ can be represented as 
\begin{equation}\label{eq:gen_mat}
    \begin{aligned}
    A(i,j)&=\begin{cases}
        \boldmu_i^T\boldnu_j &\text{ if } 1\leq j\leq i\leq N,\\
        \boldmu_j^T\boldnu_i &\text{ if } 1\leq i<j\leq N.
      \end{cases} 
    \end{aligned}
\end{equation}
\end{definition}
Let $\calG_{N,p}$ be the class of $N\times N$ symmetric $p$-GRS matrices. Then for any $A\in\calG_{N,p}$, $A$ and matrices in the form of $A$-plus-diagonal allow cheap memory storage and fast algorithms by exploiting its GR, such as matrix-vector product \cite{Vandebril2008a}, QR decomposition-based linear system solver \cite{van2004two}, and matrix inversion \cite{gantmacher2002oscillation}.

For KRM, let $K_{\boldeta}^{\SS}$, $K_{\boldeta}^{\DC}$, and $K_{\boldeta}^{\TC}$ denote the kernel matrices of SS, DC, and TC kernels, respectively.
It was shown in \cite[Proposition 2]{CA21}  that $K_{\boldeta}^{\SS}\in\calG_{N,2}$, $K_{\boldeta}^{\DC}\in\calG_{N,1}$, and $K_{\boldeta}^{\TC}\in\calG_{N,1}$ with GR 
\begin{subequations}
\begin{align}
    (\SS)\quad &\boldmu_i=\begin{bmatrix}
        -{\rho^{3 t_i}}/{6} & {\rho^{2 t_i}}/{2}
    \end{bmatrix}^T, \quad \boldnu_j=\begin{bmatrix}
            1 & \rho^{ t_j}
        \end{bmatrix}^T, \label{eq:gen_SS} \\
    (\DC)\quad &\mu_i=(\lambda\rho)^{t_i}, \quad \nu_j=(\lambda/\rho)^{t_j}, \label{eq:gen_DC} \\
    (\TC)\quad &\mu_i=(\rho)^{2t_i}, \quad \nu_j=1. \label{eq:gen_TC}
\end{align}    
\end{subequations}
The structure of the output kernel matrix $\Psi_{\boldeta}$ depends not only on the kernel, but also on the choice of the input. 
In Example~\ref{ex:unitImp}, $\Psi_{\boldeta}=K_{\boldeta}$ by \cref{eq:Psi_M_H_unitImp}, so given $K_{\boldeta}\in\calG_{N,p'}$, we have $\Psi_{\boldeta}\in\calG_{N,p'}$. 
More generally, if $K_{\boldeta}\in\calG_{N,p'}$ and the input $u(t)$ satisfies 
\begin{align}\label{eq:input_condition_GR_output_kernel}
 u(t-b)=\sum_{k=1}^r \pi_k(t) \rho_k(b), \ \pi_k,\rho_k:\R_+\to \R, \ r\in\N,
\end{align}
then $\Psi_{\boldeta}\in\calG_{N,p}$ with $p=p'+r$ by \cite[Theorem 3]{CA21}.
The condition \cref{eq:input_condition_GR_output_kernel} is mild, and many commonly used test input signals in automatic control satisfy this condition including
\begin{subequations}
  \begin{align}
    \text{(Polynomial)} \quad u(t)&=t^q, \ q\in\N, \label{eq:input-poly}\\
    \text{(Sinusoidal)} \quad u(t)&=\sin(\omega t + \theta), \ \omega,\theta\in\R, \label{eq:input-sin}\\
    \text{(Exponential)} \quad u(t)&=e^{-\beta t}, \quad \beta\in\R, \label{eq:input-exp}
  \end{align}
\end{subequations}their products, and their linear combinations.
In particular, we have $r=q+1$ for \cref{eq:input-poly} and $r=2$ for \cref{eq:input-sin}.  
As long as the GR of $\Psi_{\boldeta}$ are available, the fast algorithms derived in \cite{andersen2020smoothing,CA21} can be directly applied to $\Psi_{\boldeta}$ to calculate \cref{eq:sysid_rkhs_alpha} and \cref{eq:EB_opt,eq:SURE_opt,eq:GCV_opt,eq:GML_opt} in at most $\calO(Np^3)$ flops.
\Cref{prop:Giv_output} provides an example for the GR \cref{eq:output_kernel_input_exp_GR} of $\Psi_{\boldeta}\in\calG_{N,2}$ with the DC kernel \cref{eq:DC} and the exponential input \cref{eq:input-exp}.

\subsection{Givens-vector representation}
\Cref{def:gen-p-semi} is strong, as the inverse of general tridiagonal matrices may not have a GR. Besides, even though $A$ has a GR, sometimes with finite precision, the reconstruction of $A$ and its relative arithmetical operations are numerically unstable and lose significant digits, e.g., when the difference of number magnitude between $U$ and $V$ are extremely large; see \cite[Example 2]{vandebril2005note}. 

Such cases appear when there is a nearly zero element in the off-diagonal part. 
Let $p=1$, a simple observation reveals that a symmetric $A\in\calG_{N,1}$ has GR \cref{eq:gen-p-sym-semi} if and only if the following statement is true: if $1\leq j\leq i\leq N$ such that $A(i,j)$ vanishes, then $A(i,1:i)=0$ or $A(j:N,j)=0$ \cite{van2005orthogonal}. 
Numerically, if an close-to-zero entry exists in $(i,j)$ for $1\leq j\leq i\leq N$, then either $\mu_i$ or $\nu_j$ must extremely approach zero, which means that during the GR construction, $\nu_{j+1},\ldots,\nu_i$ or $\mu_{i+1},\ldots,\mu_N$ might attain an extremely large magnitude to compensate for small $\mu_i$ or $\nu_j$, making $A(i,j+1),\ldots,A(i,i)$ or $A(i+1,j),\ldots,A(N,j)$ far away from begin numerically vanished, if needed. 

To overcome the drawbacks, a more general class of semiseparable matrices is defined in terms of submatrices rank \cite{vandebril2005note,van2005orthogonal,Vandebril2008a}.

\begin{definition}[$p$-semiseparable]\label{def:semi}
    A symmetric matrix $A\in\R^{N\times N}$ is called a $p$-semiseparable matrix with semiseparability rank $p$ if for $i=1,\ldots,N$, 
    \begin{enumerate}
        \item $\rank(A(i:N,1:i))\leq p$; and
        \item there exists at least one $i,j$ such that $\rank(A(i:N,1:i))= p$.
    \end{enumerate}
If $A$ is lower triangular and items 1--2 holds, then $A$ is called a lower triangular $p$-semiseparable matrix.
\end{definition}
Denote $\calS_{N,p}$ the class of $N\times N$ symmetric $p$-semiseparable matrices.
We have $\calG_{N,p}\subset\calS_{N,p}$, and $\calS_{N,p}$ also includes other easily-expressed matrices, such as diagonal matrices. 
In \cite{vandebril2005note}, the \textit{Givens-vector representation} (GvR) for $A\in\calS_{N,p}$ was proposed to identically represent this wider class of semiseparable matrices, based on which numerically stable algorithms can be derived. 
Specifically, for $A\in \calS_{N,1}$, the idea is to represent $A$ with $N-1$ Givens transformations  and a vector of length $N$, which are called the GvR of $A$. 
For $A\in\calS_{N,p}$, by \cite[Theorem 8.71]{Vandebril2008a}, we can first rewrite $A=\sum_{k=1}^{p} A_k$ for some $A_k\in\calS_{N,1}$, $k=1,\dots,p$. 
Then for $k=1,\dots,p$, the GvR of $A_k$ is given by the following $(N-1)$ nontrivial Givens transformation $\{G_{i,k}\}_{i=1}^{N-1}$ and $\{\hv_{i,k}\}_{i=1}^N$:
\begin{align*}
  G_{i,k}=\begin{bmatrix}
        c_{i,k} & -s_{i,k} \\ s_{i,k} & c_{i,k}
    \end{bmatrix}, \quad \hv_{i,k}\in\R,
\end{align*}where $c_{i,k}^2+s_{i,k}^2=1$ for $i=1,\ldots,N-1$, such that $A_k$ has $(i,j)$-entry $c_{i,k} s_{i-1:j,k}^> \hv_{j,k}$ for $1\leq j\leq i\leq N$ and $\triu(A_k,1)$ can be computed by symmetry. 
We set $c_{N,k}=1$, and $s_{N,k}=0$ for $k=1,\ldots,p$ \cite{vandebril2005note,Vandebril2008a}.
Let $\c_i=(c_{i,1} , \ldots, c_{i,p})^T$, 
$\s_i=(s_{i,1} , \ldots , s_{i,p})^T$, 
$\hbv_i=(\hv_{i,1} , \ldots , \hv_{i,p})^T$, and
$S_i=\diag(\s_i)$ for $i=1,\ldots,N$, then the GvR of $A$ is
\begin{align}\label{eq:giv_mat_p}
  A(i,j) = 
  \begin{cases}
    \c_i^T S_{i-1:j}^{>} \hbv_j &\text{ if } 1\leq j\leq i\leq N,\\
    \c_j^T S_{j-1:i}^{>} \hbv_i &\text{ if } 1\leq i<j\leq N.
  \end{cases} 
\end{align}For $p=1$, we simply write $\c_i$, $\s_i$, and $\hbv_i$ as $c_i$, $s_i$, and $\hv_i$, respectively.

The construction and retrieving procedure for GvR is detailed in \cite{vandebril2005note,Vandebril2008a}, but we skip it, since they cannot be completed in $\calO(N)$ complexity in the most general case. 
We present only the conversion from GR \cref{eq:gen_mat} to GvR here.
Suppose $A_k\in\calG_{N,1}$ for $k=1,\ldots,p$ with GR $U_k=(\mu_{1,k},\ldots,\mu_{N,k})^T$ and $V_k=(\nu_{1,k},\ldots,\nu_{N,k})^T$, then
\begin{subequations}
\begin{align}
  & G_{N-1,k}\begin{bmatrix}
    r_{N-1,k} \\ 0
  \end{bmatrix} = \begin{bmatrix}
    \mu_{N-1,k} \\ \mu_{N,k}
  \end{bmatrix}, \ G_{\ell,k} \begin{bmatrix}
    r_{\ell,k} \\ 0
  \end{bmatrix}=\begin{bmatrix}
    \mu_{\ell,k} \\ r_{\ell+1,k}
  \end{bmatrix},\quad \ell=N-1,\ldots,1, \label{eq:gen2giv_G} \\
  & \abs{\hv_{i,k}}=\abs{\nu_{i,k}}r_{i,k}, \quad r_{i,k}=\sqrt{ \sum_{j=i}^N \mu_{j,k}^2}, \quad i=N,\ldots,1, \label{eq:gen2giv_vhat}
\end{align}
\end{subequations}
where $c_{i,k}\hv_{i,k}$ and $\mu_{i,k} \nu_{i,k}$ have the same signs.
Particularly, if there are $i=N-1,\ldots,1$ such that $r_{i,k}=0$, then we let $c_{i,k}=1$ and $s_{i,k}=0$; and
if $c_{i,k}=0$, then we let $s_{i,k}=1$.
Intuitively, the GvR construction factorizes $U_k$ into products $c_{i,k} s_{i-1:j,k}^>$ with bounded components $c_{i,k}, s_{i,k} \in [-1,1]$.
This procedure is stable as it consists of Givens rotation, with computational complexity $\calO(N)$ \cite{vandebril2005note,Vandebril2008a}. 

For KRM, by GR \cref{eq:gen_SS,eq:gen_DC,eq:gen_TC} and the procedure \cref{eq:gen2giv_G,eq:gen2giv_vhat}, we can obtain the GvR of the kernel matrices $K_{\boldeta}^{\SS}$, $K_{\boldeta}^{\DC}$, and $K_{\boldeta}^{\TC}$.

\begin{proposition}\label{prop:Giv_SS_DC_TC}
    The kernel matrix $K_{\boldeta}^{\SS}\in\calS_{N,2}$ with $c=1$ has GvR
    \begin{equation}\label{eq:Giv_SS}
        \begin{aligned}
         & \c_{i}^T=\begin{bmatrix}
            \frac{-\rho^{3 t_i}}{\sqrt{\sum_{j=i}^{N}\rho^{6 t_j}}} & \frac{\rho^{2 t_i}}{\sqrt{\sum_{j=i}^N \rho^{4 t_j}}}
        \end{bmatrix},\\
        & \s_{i}^T=\begin{bmatrix}
            \frac{(-1)^{\1(i=N-1)} \sqrt{\sum_{j=i+1}^N \rho^{6 t_j}}}{\sqrt{\sum_{j=i}^N \rho^{6 t_j}}} &
            \frac{\sqrt{\sum_{j=i+1}^N \rho^{4 t_j}}}{\sqrt{\sum_{j=i}^N \rho^{4 t_j}}}
        \end{bmatrix}, \\
        & \hbv_{\ell}^T=\begin{bmatrix}
            \frac{(-1)^{\1(i=N)}}{6}\sqrt{\sum_{j=\ell}^N \rho^{6 t_j}} &
            \frac{\rho^{ t_\ell}}{2}\sqrt{\sum_{j=\ell}^N \rho^{4 t_j}}
        \end{bmatrix},
      \end{aligned}
    \end{equation}
    and the kernel matrix $K_{\boldeta}^{\DC}\in\calS_{N,1}$ with $c=1$ has GvR
    \begin{align}\label{eq:Giv_DC}
        c_i=\frac{(\lambda\rho)^{t_i}}{\sqrt{\sum_{j=i}^N (\lambda \rho)^{2t_j}}}, \ 
        s_i= \frac{\sqrt{\sum_{j=i+1}^{N}(\lambda\rho)^{2t_j}}}{\sqrt{\sum_{j=i}^{N}(\lambda\rho)^{2t_j}}}, \  
        \hv_\ell=  \sbk{\frac{\lambda}{\rho}}^{t_\ell}\sqrt{ \sum_{j=\ell}^{N}(\lambda\rho)^{2 t_j} }, 
    \end{align}
    for $i=1,\ldots,N-1$ and $\ell=1,\ldots,N$.
    Letting $\lambda=\rho$ in the GvR of $K_{\eta}^{\DC}$ gives the GvR of the kernel matrix $K_{\boldeta}^{\TC}\in\calS_{N,1}$. 
\end{proposition}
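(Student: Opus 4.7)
The plan is to verify the three stated GvR formulas by directly executing the GR-to-GvR conversion algorithm \eqref{eq:gen2giv_G}--\eqref{eq:gen2giv_vhat} on the generators \eqref{eq:gen_SS}--\eqref{eq:gen_TC} furnished by \cite[Proposition~2]{CA21}. Membership in $\calS_{N,p}$ is then automatic from $\calG_{N,p}\subset\calS_{N,p}$, so the entire content reduces to closed-form evaluation of the quantities $r_{i,k}$, $c_{i,k}$, $s_{i,k}$, and $\hv_{i,k}$ in \eqref{eq:gen2giv_G}--\eqref{eq:gen2giv_vhat}.

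First I would handle the DC kernel, which is the simplest since it is $1$-GRS with generators $\mu_i=(\lambda\rho)^{t_i}>0$ and $\nu_j=(\lambda/\rho)^{t_j}>0$. Substituting into \eqref{eq:gen2giv_vhat} gives $r_i=\sqrt{\sum_{j=i}^N(\lambda\rho)^{2t_j}}$ directly. Because all $\mu_i$ and $\nu_j$ are positive, every sign choice in \eqref{eq:gen2giv_G} is trivial: $c_i=\mu_i/r_i$ and $s_i=r_{i+1}/r_i$ are both positive, and $\hv_i=\nu_i r_i$. Simplifying yields \eqref{eq:Giv_DC}. The TC formula then follows by setting $\lambda=\rho$ in \eqref{eq:gen_DC} and hence in \eqref{eq:Giv_DC}, since \eqref{eq:TC} is precisely the $\lambda=\rho$ specialization of \eqref{eq:DC}.

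Next I would tackle the SS kernel, which is $2$-GRS with rank-one summands indexed by $k=1,2$. For each $k$ I apply \eqref{eq:gen2giv_G}--\eqref{eq:gen2giv_vhat} separately. For $k=2$ the generators $\mu_{i,2}=\rho^{2t_i}/2$ and $\nu_{j,2}=\rho^{t_j}$ are positive, so $r_{i,2}=\tfrac12\sqrt{\sum_{j=i}^N\rho^{4t_j}}$ and the second column of \eqref{eq:Giv_SS} follows by the same positive-case reasoning as in the DC case, with the $1/2$ factors cancelling between $c_{i,2}$ and $r_{i,2}$ and combining inside $\hv_{i,2}$. For $k=1$ the generator $\mu_{i,1}=-\rho^{3t_i}/6$ is negative, so one must track signs carefully: $r_{i,1}=\tfrac16\sqrt{\sum_{j=i}^N\rho^{6t_j}}$ is positive, forcing $c_{i,1}=\mu_{i,1}/r_{i,1}$ to be negative (first entry of $\c_i$ in \eqref{eq:Giv_SS}). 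The Givens update $s_{i,1}r_{i,1}=r_{i+1,1}$ for $i<N-1$ yields a positive $s_{i,1}$, whereas at $i=N-1$ the update degenerates to $s_{N-1,1}r_{N-1,1}=\mu_{N,1}<0$, which flips the sign. This is exactly what the factor $(-1)^{\mathbf{1}(i=N-1)}$ records. For $\hv_{i,1}$, the rule that $c_{i,1}\hv_{i,1}$ must share the sign of $\mu_{i,1}\nu_{i,1}$ gives $\hv_{i,1}>0$ when $i<N$, but the boundary convention $c_{N,1}=1$ (rather than $\mu_{N,1}/r_{N,1}=-1$) introduces the lone sign flip at $i=N$, i.e.\ the factor $(-1)^{\mathbf{1}(i=N)}$. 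Combining gives the first columns of $\c_i$, $\s_i$, $\hbv_i$ in \eqref{eq:Giv_SS}.

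The main obstacle is this sign bookkeeping for the $k=1$ component of the SS kernel, particularly reconciling the sign conventions inherent in \eqref{eq:gen2giv_G}--\eqref{eq:gen2giv_vhat} with the boundary conventions $c_{N,k}=1,\ s_{N,k}=0$ that appear only at the last index. A quick sanity check is to verify that the resulting GvR reproduces the diagonal and a representative off-diagonal entry of $K_{\boldeta}^{\SS}$ via \eqref{eq:giv_mat_p}: on the diagonal, $\c_j^T \hbv_j=-\rho^{6t_j}/6\cdot\tfrac16\sqrt{\sum_{\ell\ge j}\rho^{6t_\ell}}^{-1}\sqrt{\sum_{\ell\ge j}\rho^{6t_\ell}}+\rho^{2t_j}\cdot\tfrac{\rho^{t_j}}{2}$ collapses to $\rho^{3t_j}/2-\rho^{6t_j}/6$, matching $\calK^{\SS}(t_j,t_j;\boldeta)$ at $c=1$ from \eqref{eq:SS}; the analogous off-diagonal check produces the telescoping that converts $\c_i^T S_{i-1:j}^{>}\hbv_j$ back to $\boldmu_i^T\boldnu_j$, thereby confirming correctness of both the values and the signs.
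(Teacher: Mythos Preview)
Your proposal is correct and follows essentially the same route as the paper's proof: both directly apply the GR-to-GvR conversion \eqref{eq:gen2giv_G}--\eqref{eq:gen2giv_vhat} to the generators \eqref{eq:gen_SS}--\eqref{eq:gen_TC}, with the only nontrivial point being the sign bookkeeping for the $k=1$ component of the SS kernel (the $(-1)^{\mathbf 1(i=N-1)}$ and $(-1)^{\mathbf 1(i=N)}$ factors), which you explain in the same way. Your diagonal sanity check contains a harmless typo---the exponent $6t_j$ should read $3t_j$ in two places---but the conclusion and the main argument are unaffected.
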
 The proof of this proposition is placed in \Cref{sec-proof:Giv_SS_DC_TC} of the Appendix~\ref{sec:proofs}.

In practice, if the GR of a matrix is available, the method given by \cref{eq:gen2giv_G,eq:gen2giv_vhat} provides two ways to construct the GvR of the matrix, either by using \cref{eq:gen2giv_G,eq:gen2giv_vhat} to derive the closed-form expression of $\c_i$, $\s_i$, and $\hbv_{\ell}$ like \cref{eq:Giv_SS,eq:Giv_DC} and their sampled versions \cref{eq:Giv_DC_equspace,eq:Giv_SS_equspace},
or by directly computing its GvR numerically via \cref{eq:gen2giv_G,eq:gen2giv_vhat} when the closed-form expression is hard to derive.
Simulation results in \Cref{sec:experiments} show that even though the GR-based algorithms are numerically unstable, the GvR-based algorithms relying on the above two GvR-construction ways still provide  accurate results.

Constructing GvR via \cref{eq:gen2giv_G,eq:gen2giv_vhat} is also applicable to second-order DC (DC2) and TC (TC2) kernels \cite{Z24}, and other more general kernels, such as the simulation induced (SI) kernels and amplitude modulated locally stationary (AMLS) kernels \cite{CA21}. What's more, if the input $u(t)$ satisfies \cref{eq:input_condition_GR_output_kernel}, then by \cite[Theorem 3]{CA21}, the output kernel $\Psi_{\boldeta}\in\calG_{N,p'+r}\subset\calS_{N,p'+r}$, whose GvR can also be obtained by its GR via \cref{eq:gen2giv_G,eq:gen2giv_vhat}.
For illustration, we show below  the GvR \cref{eq:output_kernel_input_exp_GvR_CT,eq:output_kernel_input_exp_GvR_DT} of $\Psi_{\boldeta}\in\calS_{N,2}$ with the exponential input \cref{eq:input-exp}  
and DC kernel \cref{eq:DC}. 

\begin{proposition}\label{prop:Giv_output}
Consider the output kernel matrix \eqref{eq:output_kernel_mat}.
Suppose that the exponential input \cref{eq:input-exp} and the DC kernel  \cref{eq:DC} are used, and moreover, $T_{\lambda,\rho,\alpha}=\log(\lambda\rho)+\alpha\neq 0$ and $D_{\lambda,\rho,\alpha}=\log(\lambda/\rho)+\alpha\neq 0$.
Then for the CT case, the output kernel matrix $\Phi_{\boldeta}\in\calS_{N,2}$ with the following GvR
\begin{small}
  \begin{equation}\label{eq:output_kernel_input_exp_GvR_CT}
    \begin{aligned}
        \c_i^T &= \begin{bmatrix}
        \frac{\abs{(\lambda\rho)^{t_i}-e^{-\alpha t_i}}}
        {\sqrt{\sum_{j=i}^{N} [(\lambda\rho)^{t_j}-e^{-\alpha t_j}]^2}} &
        \frac{e^{-\alpha t_i}}{\sqrt{\sum_{j=i}^{N}e^{-2\alpha t_j}}}
        \end{bmatrix}, \\
      \s_i^T &= \begin{bmatrix}
        \frac{\sqrt{\sum_{j=i+1}^{N} [(\lambda\rho)^{t_j}-e^{-\alpha t_j}]^2}}
        {\sqrt{\sum_{j=i}^{N} [(\lambda\rho)^{t_j}-e^{-\alpha t_j}]^2}} &
        \frac{\sqrt{\sum_{j=i+1}^{N}e^{-2\alpha t_j}}}{\sqrt{\sum_{j=i}^{N}e^{-2\alpha t_j}}}
      \end{bmatrix}, \\
      \hv_{i,1} &=
        \frac{\abs{(\lambda/\rho)^{t_i}-e^{-\alpha t_i}}
        \sqrt{\sum_{j=i}^{N}[(\lambda\rho)^{t_j}-e^{-\alpha t_j}]^2}}
        {\abs{D_{\lambda,\rho,\alpha}T_{\lambda,\rho,\alpha}}},\\
      \hv_{i,2} &= \frac{(\lambda/\rho)^{t_i}-(\lambda\rho)^{t_i}+C_{\lambda,\rho,\alpha}(\lambda^{2t_i}e^{\alpha t_i}-e^{-\alpha t_i})}{D_{\lambda,\rho,\alpha}T_{\lambda,\rho,\alpha}} \sqrt{\sum_{j=i}^{N}e^{-2\alpha t_j}},
    \end{aligned}
  \end{equation}
\end{small}where $C_{\lambda,\rho,\alpha}={\log\rho}/(\log \lambda + \alpha)$. For the DT case, the output kernel matrix $\Phi_{\boldeta}\in\calS_{N,2}$ with the following GvR
  \begin{small}
  \begin{equation}\label{eq:output_kernel_input_exp_GvR_DT}
    \begin{aligned}
      \c_i^T &= \begin{bmatrix}
        \frac{\abs{e^{-\alpha t_i}-(\lambda\rho)^{t_i}e^{T_{\lambda,\rho,\alpha}}}}
        {\sqrt{\sum_{j=i}^{N} [e^{-\alpha t_j}-(\lambda\rho)^{t_j}e^{T_{\lambda,\rho,\alpha}}]^2}} &
        \frac{e^{-\alpha t_i}}{\sqrt{\sum_{j=i}^{N}e^{-2\alpha t_j}}}
        \end{bmatrix}, \\
        \s_i^T &= \begin{bmatrix}
        \frac{\sqrt{\sum_{j=i+1}^{N} [e^{-\alpha t_j}-(\lambda\rho)^{t_j}e^{T_{\lambda,\rho,\alpha}}]^2}}
        {\sqrt{\sum_{j=i}^{N} [e^{-\alpha t_j}-(\lambda\rho)^{t_j}e^{T_{\lambda,\rho,\alpha}}]^2}} &
        \frac{\sqrt{\sum_{j=i+1}^{N}e^{-2\alpha t_j}}}{\sqrt{\sum_{j=i}^{N}e^{-2\alpha t_j}}}
      \end{bmatrix}, \\\
      \hv_{i,1} &=
        \frac{\abs{e^{-\alpha t_i}-\sbk{{\lambda}/{\rho}}^{t_i}e^{D_{\lambda,\rho,\alpha}}}
        \sqrt{\sum_{j=i}^{N}[e^{-\alpha t_j}-(\lambda\rho)^{t_j}e^{T_{\lambda,\rho,\alpha}}]^2}}
        {\abs{D_{\lambda,\rho,\alpha}'T_{\lambda,\rho,\alpha}'}},\\
      \hv_{i,2} &= \frac{e^{D_{\lambda,\rho,\alpha}}\sbk{{\lambda}/{\rho}}^{t_i}-e^{T_{\lambda,\rho,\alpha}}(\lambda\rho)^{t_i}+C_{\lambda,\rho,\alpha}'(e^{D_{\lambda,\rho,\alpha}+T_{\lambda,\rho,\alpha}}\lambda^{2t_i}e^{\alpha t_i}-e^{-\alpha t_i})}{D_{\lambda,\rho,\alpha}'T_{\lambda,\rho,\alpha}'} \sqrt{\sum_{j=i}^{N}e^{-2\alpha t_j}},
    \end{aligned}
  \end{equation}
  \end{small}where
  $T_{\lambda,\rho,\alpha}'=1-e^{T_{\lambda,\rho,\alpha}}$, 
  $D_{\lambda,\rho,\alpha}'=1-e^{D_{\lambda,\rho,\alpha}}$,
  and 
  $C_{\lambda,\rho,\alpha}'=(e^{D_{\lambda,\rho,\alpha}}-e^{T_{\lambda,\rho,\alpha}})/(1-e^{D_{\lambda,\rho,\alpha}+T_{\lambda,\rho,\alpha}})$.

\end{proposition}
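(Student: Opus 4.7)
The plan is to compute the generator representation (GR) of $\Psi_{\boldeta}$ in closed form and then convert it to the GvR using \eqref{eq:gen2giv_G}--\eqref{eq:gen2giv_vhat}. Existence of a rank-$2$ GR follows from \cite[Theorem 3]{CA21}, since the exponential input satisfies $u(t-b)=e^{-\alpha t}e^{\alpha b}$ so that \eqref{eq:input_condition_GR_output_kernel} holds with $r=1$, and the DC kernel contributes $p'=1$. What I need beyond existence is the specific GR whose image under the conversion formulas is the pair \eqref{eq:output_kernel_input_exp_GvR_CT}--\eqref{eq:output_kernel_input_exp_GvR_DT}.

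For the CT case with $t_i \ge t_j$, I substitute $\calK^{\DC}$ and $u(t)=e^{-\alpha t}$ into \eqref{eq:output_kernel}, factor $e^{-\alpha(t_i+t_j)}$ out of the double integral, and resolve $\rho^{|s-r|}$ by splitting the inner integral at $r=s$ and the outer at $s=t_j$. Every piece is then an elementary integral of $e^{Ax}$, $e^{Bx}$, or $e^{(A+B)x}$ with $A=D_{\lambda,\rho,\alpha}$, $B=T_{\lambda,\rho,\alpha}$; the nondegeneracy hypotheses $T_{\lambda,\rho,\alpha}\neq 0$, $D_{\lambda,\rho,\alpha}\neq 0$ enter precisely here and produce the $1/(AB)$ prefactor. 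Using $(B-A)/(A+B)=C_{\lambda,\rho,\alpha}$ to collapse the resulting constants, I rearrange the answer into $\Psi(t_i,t_j)=\boldmu_i^T\boldnu_j$ with $\boldmu_i=((\lambda\rho)^{t_i}-e^{-\alpha t_i},\,e^{-\alpha t_i})^T$ and $\boldnu_j=(D_{\lambda,\rho,\alpha}T_{\lambda,\rho,\alpha})^{-1}((\lambda/\rho)^{t_j}-e^{-\alpha t_j},\,X_j)^T$, where $X_j=(\lambda/\rho)^{t_j}-(\lambda\rho)^{t_j}+C_{\lambda,\rho,\alpha}(\lambda^{2t_j}e^{\alpha t_j}-e^{-\alpha t_j})$. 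Feeding $(\boldmu_i,\boldnu_j)$ into \eqref{eq:gen2giv_G}--\eqref{eq:gen2giv_vhat} componentwise then reproduces \eqref{eq:output_kernel_input_exp_GvR_CT}: the tail sums $r_{i,k}^2=\sum_{j=i}^N\mu_{j,k}^2$ are the square-root denominators of $\c_i$ and $\s_i$, the entries of $\c_i$ are $|\mu_{i,k}|/r_{i,k}$, and $\hv_{i,k}=|\nu_{i,k}|\,r_{i,k}$ with sign chosen so that $c_{i,k}\hv_{i,k}$ and $\mu_{i,k}\nu_{i,k}$ agree in sign; the observation that $(\lambda\rho)^{t_i}-e^{-\alpha t_i}$ has the sign of $T_{\lambda,\rho,\alpha}$ (and analogously for $D_{\lambda,\rho,\alpha}$) makes the absolute values in the formula consistent.

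The DT case proceeds identically but replaces the double integrals by geometric sums $\sum_{s=0}^t x^s=(1-x^{t+1})/(1-x)$; the three closed-form factors produce the primed constants $T'_{\lambda,\rho,\alpha}$, $D'_{\lambda,\rho,\alpha}$, $C'_{\lambda,\rho,\alpha}$, and each exponential $(\lambda\rho)^{t_i}$ or $(\lambda/\rho)^{t_j}$ in the CT generators is replaced by its discrete counterpart such as $e^{-\alpha t_i}-(\lambda\rho)^{t_i}e^{T_{\lambda,\rho,\alpha}}$, after which \eqref{eq:gen2giv_G}--\eqref{eq:gen2giv_vhat} delivers \eqref{eq:output_kernel_input_exp_GvR_DT}. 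The main obstacle is the algebraic bookkeeping in the CT integration: the two splittings produce four sub-regions and six exponential integrals whose raw combination is a linear combination of the four basis functions $(\lambda\rho)^{t_i}$, $(\lambda/\rho)^{t_i}$, $e^{-\alpha t_i}$, and $\lambda^{2t_i}e^{\alpha t_i}$, and considerable care is needed to repackage this into exactly the rank-$2$ product above in which $\mu_{i,2}=e^{-\alpha t_i}$; once that specific pairing is identified, the passage to GvR is mechanical.
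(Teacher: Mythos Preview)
Your proposal is correct and follows essentially the same two-step approach as the paper: first derive a rank-$2$ GR of $\Psi_{\boldeta}$ by explicit integration (CT) or summation (DT), then feed each component into the conversion formulas \eqref{eq:gen2giv_G}--\eqref{eq:gen2giv_vhat}. The only cosmetic difference is that the paper organizes the double integral through the intermediate quantities $f_{11}^{(1)},f_{11}^{(2)},\bar{\ell}_1,\bar{\rho}_1$ of \cite[Theorem~3]{CA21} rather than splitting the region directly, and places the factor $1/T_{\lambda,\rho,\alpha}$ in $\bar{\mu}_1$ instead of $\bar{\nu}_1$; since the GvR is invariant under such rescalings of the generators, both routes yield the same \eqref{eq:output_kernel_input_exp_GvR_CT}--\eqref{eq:output_kernel_input_exp_GvR_DT}.
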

The proof of this proposition is placed in \Cref{sec-proof:Giv_output} of the Appendix~\ref{sec:proofs}.

\section{Algorithms}\label{sec:alg}
In this section, we provide the fast algorithms for computing \cref{eq:sysid_rkhs_alpha} and \cref{eq:EB_opt,eq:SURE_opt,eq:GCV_opt,eq:GML_opt} for KRM.
Specifically, assume that we have the GvR of $\Psi_{\boldeta}\in\calS_{N,p}$, the calculation of \cref{eq:sysid_rkhs_alpha}, $\hat{\y}$, and the criteria \cref{eq:EB_opt,eq:SURE_opt,eq:GCV_opt,eq:GML_opt} are summarized in \Cref{tab:krm_algorithms}.
The computational cost are all  at most $\calO(Np^2)$ flops.
In comparison, for the computation of $\tr(M_{\boldeta,\gamma}^{-1})$, the GR-based implementation in \cite{CA21} costs $\calO(Np^3)$ flops, as the GR representation of $L_{\boldeta,\gamma}^{-1}$ is required.

\begin{table}[htbp]
\centering
\caption{Fast algorithms for computing KRM quantities given $\Psi_{\boldeta}\in\calS_{N,p}$}
\label{tab:krm_algorithms}
\begin{tabular}{lcc}
\hline
Quantitiy & Algorithm & Cost \\
\hline
GvR: $M_{\boldeta,\gamma} = \Psi_{\boldeta} + \gamma I_N$ & -- & -- \\
GvR: Cholesky factor $L_{\boldeta,\gamma}$ of $M_{\boldeta,\gamma}$  & \Cref{alg:Chol} & $\calO(Np^2)$ \\
$\hat{\boldalpha} = L_{\boldeta,\gamma}^{-T}(L_{\boldeta,\gamma}^{-1}\y)$ & \Cref{alg:Lx=y,alg:Ltx=y} & $\calO(Np)$ \\
$\hat{\y} = \Psi_{\boldeta}\hat{\boldalpha}$ & \Cref{alg:Ax} & $\calO(Np)$ \\
$\y^T M_{\boldeta,\gamma}^{-1}\y $ & $\y^T\hat{\boldalpha}$ & $\calO(N)$ \\
$\log\det(M_{\boldeta,\gamma})$ & $2\sum_{i=1}^{N}\log [L_{\boldeta,\gamma}(i,i)]$  & $\calO(N)$ \\
$\tr(M_{\boldeta,\gamma}^{-1})$ &  \Cref{alg:diagADinv} & $\calO(Np^2)$\\
$\tr(H_{\boldeta,\gamma})$ & \Cref{alg:traceADtAtD} & $\calO(Np^2)$ \\
\hline
\end{tabular}
\end{table}

\subsection{Matrix-vector product}\label{sec:Ax}
For the remaining parts of this section, assume $A\in\calS_{N,p}$ and consider the matrix-vector product $A\x$, where $\x\in\R^N$. We decompose 
\begin{align*}
  A\x=\underbrace{\tril(A,-1)\x}_{\y^{\rmL}}+\underbrace{\diag(A)\x}_{\y^{\rmD}}+\underbrace{\triu(A,1)\x}_{\y^{\rmR}},
\end{align*}
and let $\y=\y^{\rmL}+\y^{\rmD}=\tril(A)\x$.
The $i$th element $y_i^\rmL$ of $\y^{\rmL}$ is 
\begin{align}\label{eq:trilAx_yi}
  y_i^\rmL =\c_i^T \boldchi_i, \quad \text{where } \boldchi_i&=\begin{cases}
    \0_p & \text{ if } i=1, \\
    \sum_{j=1}^{i-1}S_{i-1:j}^{>}\hbv_j x_j & \text{ if } i=2,\ldots,N,
  \end{cases}
\end{align}with relation $\boldchi_i=S_{i-1}(\boldchi_{i-1}+\hbv_{i-1}x_{i-1})$ for $i=2,\ldots,N$.
Combined with $y_i^\rmD=(\c_i^T\hbv_i)x_i$, we can write the formulas of $\y$ as the so-called \textit{discrete-time forward system with homogeneous boundary conditions} (DTFSwHBC) \cite{eidelman2013separable}:
\begin{equation}\label{eq:trilAx_forwardSys}
  \begin{cases}
    \boldchi_i=S_{i-1}\boldchi_{i-1}+S_{i-1}\hbv_{i-1}x_{i-1}, & i=2,\ldots,N\\
    y_i = \c_i^T \boldchi_i + (\c_i^T\hbv_i)x_i, & i=1,\ldots,N\\
    \boldchi_1=\0_p,
  \end{cases} 
\end{equation}
where $x_i$, $y_i$, and $\boldchi_i$ are called the system input, output, and state, respectively. Often $\tril(A)$ is called the matrix of the input-output operator
of the system \cref{eq:trilAx_forwardSys}. 
The algorithm to compute $\y=\tril(A)\x$ in terms of system \cref{eq:trilAx_forwardSys} was firstly introduced by \cite{eidelman2013separable} and clearly, $\y^{\rmR}$ can be computed in a similar way.
\Cref{alg:Ax} shows the recursive evaluation of $A\x$ in $\calO(Np)$ flops, the same order as the GR-based implementation \cite{Vandebril2008a,andersen2020smoothing}.
The high-efficiency comes from the small sub-block rank and column/row dependency. 
For example, for $p=1$ and $i=1,\ldots,N-1$, we have $(s_i c_{i+1}/c_i)\cdot\tril(A)(i,1:i)=\tril(A)(i+1,1:i)$, thus allowing a recursive relation. 

\begin{algorithm}
  \caption{Matrix-vector product $A\x$}
  \label{alg:Ax}
  \begin{algorithmic}
  \STATE{\textbf{Input:} GvR $\c_i,\s_i,\hbv_i\in\R^p$ of $A\in\calS_{N,p}$, and $\x\in\R^N$.}  
  \STATE{\textbf{Output:} $\z\in\R^N$ such that $A\x=\z$.} 
  \STATE{Initialize $\boldchi^\rmL\leftarrow\0_p$; $\boldchi^\rmR\leftarrow\0_p$;} 
  \FOR{$i=1\ldots,N$}
    \STATE{$y_i^\rmL\leftarrow \c_i^T\boldchi^\rmL$;}
    \STATE{$\boldchi^\rmL\leftarrow \s_{i}\circ(\boldchi^\rmL+\hbv_{i}x_{i})$ if $i\neq N$;}
    \STATE{$y_i^\rmD\leftarrow \c_i^T\hbv_i x_i$;}
  \ENDFOR
  \FOR{$i=N,\ldots,1$}
    \STATE{$y_i^\rmR\leftarrow \hbv_i^T\boldchi^\rmR$;}
    \STATE{$\boldchi^\rmR\leftarrow \s_{i-1}\circ(\boldchi^\rmR+\c_{i}x_{i})$ if $i\neq 1$;}
  \ENDFOR
  \STATE{$\z\leftarrow\y^\rmL+\y^\rmD+\y^\rmR$}
  \end{algorithmic}
\end{algorithm}

\subsection{Cholesky factorization of $A+D$}\label{sec:Chol}
Let $\d\in\R_+^N$ and $D=\diag(\d)$, then the $(i,j)$-entry of $A+D$ can obviously be written as follows:
\begin{align}\label{eq:giv_mat_quasi}
  (A+D)(i,j) = 
  \begin{cases}
    \c_i^T S_{i-1:j}^{>} \hbv_j &\text{ if } 1\leq j< i\leq N,\\
    \c_i^T \hbv_i + d_i &\text{ if } 1\leq i=j\leq N,\\
    \c_j^T S_{j-1:i}^{>} \hbv_i &\text{ if } 1\leq i<j\leq N, 
  \end{cases} 
\end{align}
As well known, the semiseparable-plus-diagonal matrix $A+D$ belongs to the class of the \textit{symmetric quasiseparable} or \textit{$p$-quasiseparable matrices} with quasiseparability rank $p$ \cite{Vandebril2008a}. 
Below, it is shown that the Cholesky factor $L$ of $(A+D)=LL^T$ has a representation containing the Givens-vector $\c_i$ and $\s_i$.
\begin{proposition}\label{prop:L}
    Suppose $A+D\succ \0$, then the Cholesky factor $L$ of $A+D$ has $(i,j)$-entry
    \begin{align}\label{eq:chol_L}
    L (i,j) = 
    \begin{cases}
    \c_i^T S_{i-1:j}^{>} \w_j &\text{ if } 1\leq j< i\leq N,\\
    f_i &\text{ if } 1\leq i=j\leq N,
  \end{cases} 
\end{align}where $f_i>0$ and $\w_i\in\R^p$ have the recursive relations 
\begin{align*}
    f_i &= \sqrt{\c_i^T(\hbv_i-P_i\c_i)+d_i}, \quad i=1,\ldots,N, \\
    \w_i &= (\hbv_i-P_i\c_i)/f_i, \qquad \quad \hspace{9pt} i=1,\ldots,N-1,
\end{align*}with $P_1=\0_{p\times p}$ and $P_i=S_{i-1}(\w_{i-1}\w_{i-1}^T+P_{i-1})S_{i-1}$ for $i=2,\ldots,N$.
\end{proposition}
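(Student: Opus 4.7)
Since $A+D \succ \0$, the Cholesky factor $L$ of $A+D$ exists, is lower triangular, and is uniquely determined by the requirement $LL^T = A+D$ together with positive diagonal entries. The strategy is therefore to define $L$ by \cref{eq:chol_L} with $(f_i,\w_i)$ and $(P_i)$ as in the statement, and then verify directly that $LL^T = A+D$ with $f_i>0$; uniqueness of the Cholesky factorization then completes the proof.

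The key preliminary step is to unfold the recursion for $P_i$ into the closed form
\begin{align*}
  P_j \;=\; \sum_{k=1}^{j-1} S_{j-1:k}^{>} \w_k \w_k^T \bigl(S_{j-1:k}^{>}\bigr)^T, \qquad j=1,\ldots,N,
\end{align*}
which I would establish by induction on $j$: for $j=1$ the sum is empty and equals $P_1 = \0$; for the step, since $S_{j-1}$ is diagonal (hence symmetric) the recursion $P_j = S_{j-1}(\w_{j-1}\w_{j-1}^T + P_{j-1})S_{j-1}$ together with the identity $S_{j-1} S_{j-2:k}^{>} = S_{j-1:k}^{>}$ gives exactly the displayed sum.

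With this in hand, I would compute $(LL^T)(i,j)$ for $1\le j<i\le N$ by splitting the sum as
\begin{align*}
  (LL^T)(i,j) \;=\; L(i,j)\,f_j \;+\; \sum_{k=1}^{j-1} L(i,k)\,L(j,k).
\end{align*}
Using the multiplicative identity $S_{i-1:k}^{>} = S_{i-1:j}^{>} S_{j-1:k}^{>}$ to factor out the common prefix $\c_i^T S_{i-1:j}^{>}$ from every term, and then invoking the closed form of $P_j$, the right-hand side collapses to $\c_i^T S_{i-1:j}^{>}\bigl(f_j \w_j + P_j \c_j\bigr)$. Matching this against the required value $(A+D)(i,j) = \c_i^T S_{i-1:j}^{>} \hbv_j$ from \cref{eq:giv_mat_quasi} is therefore equivalent to the identity $f_j \w_j + P_j \c_j = \hbv_j$, which is precisely the defining relation $\w_j = (\hbv_j - P_j\c_j)/f_j$. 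An analogous manipulation for $i=j$ reduces $(LL^T)(i,i) = f_i^2 + \c_i^T P_i \c_i$ to $(A+D)(i,i) = \c_i^T \hbv_i + d_i$, which is exactly the formula $f_i^2 = \c_i^T(\hbv_i - P_i\c_i) + d_i$.

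The last thing to check is that $f_i$ is a well-defined positive real number, so that $\w_i$ is well-defined and the expression under the square root makes sense. The main obstacle of the proof is precisely this positivity: one must argue that the quantity $\c_i^T(\hbv_i - P_i\c_i) + d_i$ is strictly positive. I would handle this by induction on $i$ simultaneously with the construction: given that rows $1,\ldots,i-1$ of $L$ have been correctly constructed (so that the leading $(i-1)\times(i-1)$ block of $LL^T$ matches $A+D$), the Schur complement of that block in $A+D$ is positive definite because $A+D\succ \0$, and its $(1,1)$-entry equals $(A+D)(i,i) - \sum_{k=1}^{i-1} L(i,k)^2$, which by the computation above equals $\c_i^T(\hbv_i - P_i\c_i)+d_i$. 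This forces it to be strictly positive, so $f_i>0$ is well-defined, and the induction proceeds.
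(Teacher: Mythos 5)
Your proposal is correct and takes essentially the same approach as the paper: the unfolded form $P_j=\sum_{k=1}^{j-1}S_{j-1:k}^{>}\w_k\w_k^T(S_{j-1:k}^{>})^T$, the prefix factorization $S_{i-1:k}^{>}=S_{i-1:j}^{>}S_{j-1:k}^{>}$, and the Schur-complement identification of $f_i^2$ are exactly the computations the paper performs via its block partition of $A+D$ and the update $L_{22}L_{22}^T=A_{22}+D_{22}-L_{21}L_{21}^T$. The only difference is organizational (you verify $LL^T=A+D$ and invoke uniqueness rather than deriving the formulas forward), and you make the positivity of $f_i$ explicit where the paper leaves it implicit.
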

\begin{proof}

For $i=2,\ldots,N-1$, we introduce the following block partitions:
\begin{displaymath}
  A+D = \begin{bmatrix}
    A_{11} + D_{11} & A_{21}^T \\ A_{21} & A_{22}+D_{22}
  \end{bmatrix}, \quad
  L = \begin{bmatrix}
    L_{11} & \0 \\ L_{21} & L_{22}
  \end{bmatrix},
\end{displaymath}
where $A_{11}=A(1:i-1,1:i-1)$, $A_{21}=A(i:N,1:i-1)$, $A_{22}=A(i:N,i:N)$, $D_{11}=\diag(d_1,\ldots,d_{i-1})$, and $D_{22}$, $L_{11}$, $L_{21}$, and $L_{22}$ are similar decomposed blocks.
\begin{displaymath}
  L_{21}=\begin{bmatrix}
    \c_i^T S_{i-1:1}^>\w_1 & \cdots & \c_i^T S_{i-1}\w_{i-1} \\
    \vdots & \ddots & \vdots \\
    \c_N^T S_{N-1:1}^>\w_1 & \cdots & \c_N^T S_{N-1:i-1}^>\w_{i-1}
  \end{bmatrix}, \
  L_{22}=\begin{bmatrix}
    f_i & \0 \\
    \tilde{L}_{22} & \cdots
  \end{bmatrix}, 
\end{displaymath}
where $\tilde{L}_{22}=L_{22}(2:N-i+1,1)$. 
By $L_{22}L_{22}^T = A_{22} + D_{22} - L_{21}L_{21}^T$, it follows from the $(1,1)$-entry of $L_{22}L_{22}^T$ that
\begin{align*}
  f_i^2 &= \c_i^T \hbv_i + d_i - \sum_{j=1}^{i-1}\c_i^T S_{i-1:j}^>\w_j\w_j^T S_{j:i-1}^<\c_i 
  = \c_i^T \sbk{\hbv_i - P_i\c_i} + d_i, \\
  P_i &= \sum_{j=1}^{i-1} S_{i-1:j}^>\w_j\w_j^T S_{j:i-1}^<.
\end{align*}
Next, the remaining rows of the first column of $L_{22}L_{22}^T$ is 
\begin{align*}
  &L_{22}L_{22}^T(2:N-i+1,1) = f_i \tilde{L}_{22}\\
  =&\begin{bmatrix}
    \c_{i+1}^T S_{i}\hbv_i \\ 
    \c_{i+2}^T S_{i+1:i}^>\hbv_i \\
    \vdots \\
    \c_{N}^TS_{N-1:i}^>\hbv_i
  \end{bmatrix}-
  \begin{bmatrix}
    \c_{i+1}^TS_i\sum_{j=1}^{i-1} S_{i-1:j}^> \w_j \w_j^T S_{j:i-1}^< \c_i \\
    \c_{i+2}^TS_{i+1:i}^>\sum_{j=1}^{i-1} S_{i-1:j}^> \w_j \w_j^T S_{j:i-1}^< \c_i \\
    \vdots \\
    \c_{N}^TS_{N-1:i}^>\sum_{j=1}^{i-1} S_{i-1:j}^> \w_j \w_j^T S_{j:i-1}^< \c_i 
  \end{bmatrix}\\ 
  =&\begin{bmatrix}
    \c_{i+1}^T S_i (\hbv_i - P_i \c_i) & \cdots & 
    \c_{N}^T S_{N-1:i}^> (\hbv_i - P_i \c_i)
  \end{bmatrix}^T,
\end{align*} 
hence we obtain
\begin{align*}
  \w_i &= \frac{\hbv_i - P_i \c_i}{f_i}, \quad  \tilde{L}_{22}=\begin{bmatrix}
  \c_{i+1}S_i \w_i & \cdots & \c_N^T S_{N-1:i}^>\w_i
  \end{bmatrix}^T.
\end{align*}
Thus, we have shown the recursive relations for $f_i$ and $\w_i$ by defining $P_1=\0_{p\times p}$ and $P_i=S_{i-1}(\w_{i-1}\w_{i-1}^T+P_{i-1})S_{i-1}$ for $i=2,\ldots,N$.
\end{proof}

\Cref{alg:Chol} computes all the $f_i$ and $\w_i$ in $\calO(Np^2)$ flops, which obtains the same order of complexity as GR.  
Note that if $\d=\0_p$ and $A\succ \0$, then by \Cref{alg:Chol} with $d_i=0$, the Cholesky factor $L$ inherit the semiseparability structure with $L(i,j)=\c_i^T S_{i-1:j}^> \w_j$ for $1\leq j\leq i\leq N$.
To see this, let $\tilde{\w}_i=\hbv_i-P\c_i$, then 
\begin{align*}
  \c_i^T\w_i=\frac{\tilde{\w}_i}{f_i} = \frac{\c_i^T\tilde{\w}_i}{(\c_i^T\tilde{\w}_i+d_i)^{1/2}} \overset{d_i=0}{=} (\c_i^T\tilde{\w}_i)^{1/2} = (\c_i^T\tilde{\w}_i+0)^{1/2} = f_i = L(i,i).
\end{align*}

\begin{algorithm}
  \caption{Cholesky factorization of $A+D=LL^T$, where $A+D \succ \0$.}
  \label{alg:Chol}
  \begin{algorithmic}
  \STATE{\textbf{Input:} GvR $\c_i,\s_i,\hbv_i\in\R^p$ of $A\in\calS_{N,p}$ and $\d\in\R_+^N$ such that $A+D\succ \0$.}  
  \STATE{\textbf{Output:} $\w_i\in\R^p$ ($i=1,\ldots,N-1$)  and $f_i$ ($i=1,\ldots,N$) in \cref{eq:chol_L}.} 
  \STATE{Initialize $P\leftarrow \0_{p\times p}$;}
  \FOR{$i=1,\ldots,N$}
    \STATE{$\w_i\leftarrow\hbv_i - P \c_i$; \quad $f_i\leftarrow (\c_i^T\w_i+d_i)^{1/2}$;}
    \STATE{$\w_i\leftarrow\w_i/f_i$;}
    \STATE{$P\leftarrow S_{i}(\w_i\w_i^T + P)S_{i}$ if $i\neq N$;}
  \ENDFOR
  \end{algorithmic}
\end{algorithm}

The representation \cref{eq:chol_L} allows us to compute the determinant of $(A+D)$ by $\det(A+D)=\det(L L^T)=\prod_{i=1}^{n} f_i^2$,
and the products $L\x$ and $L^T\x$ via \Cref{alg:Lx,alg:Ltx} in $\calO(Np)$ flops, which are served as a special case of \Cref{alg:Ax} with $\c_i\hbv_i$ replaced by $f_i$.
Hence, 
the $i$th element $y_i$ of product $L\x=\y$ is
\begin{align*}
  \c_i^T \boldchi_i + f_i x_i= y_i, \quad \text{where } \boldchi_i=\begin{cases}
    \0_p & \text{ if } i=1, \\
    \sum_{j=1}^{i-1}S_{i-1:j}^{>}\w_j x_j & \text{ if } i=2,\ldots,N,
  \end{cases}
\end{align*}
and similar to \cref{eq:trilAx_forwardSys}, we can write
\begin{equation}\label{eq:Lx=y_system}
  \begin{cases}
    \boldchi_i=S_{i-1}\boldchi_{i-1}+S_{i-1}\w_{i-1}x_{i-1}, & i=2,\ldots,N\\
    y_i = \c_i^T \boldchi_i + f_i x_i, & i=1,\ldots,N,\\
    \boldchi_1=\0_p.
  \end{cases} 
\end{equation}
We can also compute the forward/backward substitution $L\x=\y$ and $L^T \x=\y$ for $\x,\y\in\R^N$.
For example, for the forward substitution, writing the second line of \cref{eq:Lx=y_system} as $x_i=f_i^{-1}(y_i-\c_i^T\boldchi_i)$ gives the solution $\x$ recursively.
Hence the recursive implementation to compute $L\x=\y$ and $L^T \x=\y$ for $\x,\y\in\R^N$ cost only $O(Np)$ flops as well. See \Cref{alg:Lx=y,alg:Ltx=y}.

\subsection{Inverse of Cholesky factor}\label{sec:Chol_inv}

Assume the same settings as \Cref{sec:Chol} and $\d\in\R_{++}^N$.
Since the Cholesky factor $L$ in \cref{eq:chol_L} is $p$-quasiseparable, $L^{-1}$ is $p$-quasiseparable as well by \cite[Theorem 8.46]{Vandebril2008a}.
To calculate $L^{-1}$, first recall from \Cref{sec:Ax} that we can compute the product $L\x=\y$ through its associated DTFSwHBC \cref{eq:Lx=y_system}  with state $\boldchi_i$, input $\x$, and output $\y$. Then it is interesting to note that \cite{eidelman2013separable} proposes a method for computing the inverse of (block) quasiseparable matrix $\tilde{L}\in\R^{N\times N}$ by using the DTFSwHBC associated with $\tilde{L}\x=\y$. 
Specifically,
for a lower-triangular matrix $\tilde{L}\in\R^{N\times N}$ with block sizes $1\times 1$ for simplicity and quasiseparable generators $\tilde{L}(i,j)=\p_i^T A_{i:j}^> \q_j$ for $1\leq j < i \leq N$, and $\tilde{L}(i,i)=g_i$ for $i=1,\ldots,N$,
then by \cite[Theorem 13.2,Theorem 13.3,Corollary 13.5]{eidelman2013separable}, its corresponding DTFSwHBC for $\tilde{L}\x=\y$ with state $\tilde{\boldchi}_i$, input $\x$, and output $\y$ is 
\begin{align}\label{eq:Lx=y_quasi_system}
  \begin{cases}
    \tilde{\boldchi}_i=A_{i-1}\tilde{\boldchi}_{i-1}+\q_{i-1}x_{i-1}, & i=2,\ldots,N,\\
    y_i = \p_i^T \tilde{\boldchi}_i + g_i x_i, & i=1,\ldots,N,\\
    \tilde{\boldchi}_1=\0_p,
  \end{cases} 
\end{align}
where the coefficients $(A_i,\q_i,\p_i,g_i)$ are dependent on the quasiseparable generators of $\tilde{L}$. 
The idea to compute $\tilde{L}^{-1}$ is that by swapping the input $\x$ and output $\y$ of \eqref{eq:Lx=y_quasi_system}, we obtain a new DTFSwHBC, which can be shown to be the one associated with $\tilde{L}^{-1}\y=\x$. 
Moreover, we can derive from the coefficients of this new DTFSwHBC the corresponding quasiseparable generators of $\tilde{L}^{-1}$.

In the following, we extend this idea from quasiseparable matrices to semiseparable matrices. 
First, we start from the DTFSwHBC \cref{eq:Lx=y_system}, swap 
its input $\x$ and output $\y$, and obtain for $i=2,\ldots,N$,
\begin{align*}
    \boldchi_i 
    &= S_{i-1} (I_p-\w_{i-1}f_{i-1}^{-1}\c_{i-1}^T)[\boldchi_{i-1}+(I_p-\w_{i-1}f_{i-1}^{-1}\c_{i-1}^T)^{-1}\w_{i-1}f_{i-1}^{-1}y_{i-1}].
\end{align*}Then we let $\bar{S}_i=S_{i} (I_p-\w_{i}f_{i}^{-1}\c_{i}^T)$ and $\bar{\w}_i= f_i^{-1}(I_p-\w_i f_i^{-1}\c_i^T)^{-1}\w_i$, and obtain a new DTFSwHBC as follows  
\begin{align}\label{eq:Lx=y_inv_system}
    \begin{cases}
        \boldchi_{i}=\bar{S}_{i-1}\boldchi_{i-1} + \bar{S}_{i-1}\bar{\w}_{i-1}y_{i-1}, & i=2,\ldots,N\\
        x_i=(-f_i^{-1}\c_i^T)\boldchi_i + f_i^{-1} y_i, & i=1,\ldots,N\\
        \boldchi_1=\0_p.
    \end{cases} 
\end{align}   
Furthermore, we  ``guess'' that the coefficients $(\bar{S}_{i-1},\bar{S}_{i-1}\bar{\w}_{i-1},-f_i^{-1}\c_i^T,f_i^{-1})$ of the DTFSwHBC \eqref{eq:Lx=y_inv_system} can be used to construct an implicit representation of $L^{-1}$ and prove it rigorously in the following  \Cref{thm:Chol_inv}.
\begin{remark}
    This extension is non-trivial.
    In \cref{eq:Lx=y_quasi_system}, the coefficients $(A_i,\q_i)$ are directly obtained form the quasiseparable generators of $\tilde{L}$, which is not the case for our coefficients $(S_{i-1},S_{i-1}\w_{i-1})$ in \cref{eq:Lx=y_system} due to the common term $S_{i-1}$.
    This feature also makes the the coefficients $(\bar{S}_{i-1},\bar{S}_{i-1}\bar{\w}_{i-1})$  in DTFSwHBC \cref{eq:Lx=y_inv_system} more complicated as $\bar{\w}_{i}$ involves the matrix inverse $(I_p-\w_if_i^{-1}\c_i^T)^{-1}$, whose existence should be guaranteed. 
\end{remark}
\begin{theorem}[Inverse of $L$]\label{thm:Chol_inv}
  Suppose $A+D\succ\0$. 
  Let $L$ in \cref{eq:chol_L} be the Cholesky factor of $A+D=LL^T$. If $f_i>0$ and $d_i > 0$ for all $i=1,\ldots,N$, then $L^{-1}$ can be represented by 
  \begin{align}\label{eq:chol_Linv}
  L^{-1}(i,j) = 
  \begin{cases}
    \bar{\c}_i^T \bar{S}_{i-1:j}^{>} \bar{\w}_j &\text{ if } 1\leq j< i\leq N,\\
    \bar{f}_i &\text{ if } 1\leq i=j\leq N,
  \end{cases}
  \end{align}
where 
\begin{equation}\label{eq:inv_rep}
    \begin{aligned}
        \bar{\c}_i &= -f_i^{-1}\c_i, \qquad \qquad \qquad \qquad \ 
        \bar{S}_i = {S}_i(I_p-\w_i f_i^{-1} \c_i^T), \\
        \bar{\w}_i&= f_i^{-1}(I_p-\w_i f_i^{-1}\c_i^T)^{-1}\w_i,\quad 
        \bar{f}_i = f_i^{-1},
    \end{aligned} 
\end{equation}
with $\bar{\c}_i$ and $\bar{f}_i$ range from $i=1,\ldots,N$, and $\bar{S}_i$ and $\bar{\w}_i$ range from $i=1,\ldots,N-1$.
\end{theorem}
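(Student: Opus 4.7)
The plan is to invert the DTFSwHBC \eqref{eq:Lx=y_system} associated with $L\x=\y$ and read off the quasiseparable generators of $L^{-1}$ from the resulting system; this is precisely the manipulation foreshadowed in \eqref{eq:Lx=y_inv_system}. The theorem then reduces to (i) verifying that the inversion is algebraically legitimate, and (ii) identifying the coefficients of the inverted system with the entries of $L^{-1}$ in the form claimed by \eqref{eq:chol_Linv}.

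For (i), since $A+D\succ\0$ and $d_i>0$, \Cref{prop:L} guarantees $f_i>0$ for all $i$, so $L$ is invertible. Solving the output equation $y_i=\c_i^T\boldchi_i+f_i x_i$ for $x_i$ yields $x_i=\bar{f}_i y_i+\bar{\c}_i^T\boldchi_i$ with $\bar{\c}_i,\bar{f}_i$ as in \eqref{eq:inv_rep}. Substituting this into the state update of \eqref{eq:Lx=y_system} gives
\[
\boldchi_i=S_{i-1}(I_p-\w_{i-1}f_{i-1}^{-1}\c_{i-1}^T)\boldchi_{i-1}+S_{i-1}\w_{i-1}f_{i-1}^{-1}y_{i-1}.
\]
To put this into the canonical form $\boldchi_i=\bar{S}_{i-1}\boldchi_{i-1}+\bar{S}_{i-1}\bar{\w}_{i-1}y_{i-1}$ with $\bar{S}_i,\bar{\w}_i$ from \eqref{eq:inv_rep}, the rank-one perturbation $I_p-\w_i f_i^{-1}\c_i^T$ must be nonsingular for each $i$; one then checks directly that $\bar{S}_i\bar{\w}_i=S_i\w_i f_i^{-1}$, matching the required coefficient of $y_{i-1}$.

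The main obstacle is precisely this nonsingularity check, and it is where the hypothesis $d_i>0$ enters. By the Sherman--Morrison identity, $I_p-\w_i f_i^{-1}\c_i^T$ is invertible if and only if $1-f_i^{-1}\c_i^T\w_i\neq 0$. Using the recurrences of \Cref{prop:L}, namely $f_i^2=\c_i^T(\hbv_i-P_i\c_i)+d_i$ and $\w_i=(\hbv_i-P_i\c_i)/f_i$, a short computation gives $\c_i^T\w_i=(f_i^2-d_i)/f_i$, so $1-f_i^{-1}\c_i^T\w_i=d_i/f_i^2>0$. This not only verifies invertibility but also shows that the scalar is strictly positive, giving a well-behaved inverse and confirming the role of the assumption $d_i>0$ in the theorem.

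For (ii), I would unroll the new state recursion by induction on $i$ to get $\boldchi_i=\sum_{j=1}^{i-1}\bar{S}_{i-1:j}^{>}\bar{\w}_j y_j$, so the output equation reads $x_i=\bar{f}_i y_i+\sum_{j=1}^{i-1}\bar{\c}_i^T\bar{S}_{i-1:j}^{>}\bar{\w}_j y_j$. Since the inverted system defines the unique $\x$ satisfying $L\x=\y$, this map must be $L^{-1}$, and because $L^{-1}$ is lower triangular, $x_i=\sum_{j=1}^{i}L^{-1}(i,j)y_j$. Matching coefficients of each $y_j$ forces $L^{-1}(i,i)=\bar{f}_i$ and $L^{-1}(i,j)=\bar{\c}_i^T\bar{S}_{i-1:j}^{>}\bar{\w}_j$ for $j<i$, which is exactly \eqref{eq:chol_Linv} with the generators \eqref{eq:inv_rep}.
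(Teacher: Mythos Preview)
Your proposal is correct and follows essentially the same route as the paper: both establish invertibility of $I_p-\w_i f_i^{-1}\c_i^T$ via the Sherman--Morrison identity together with the key computation $f_i-\c_i^T\w_i=d_i/f_i>0$, both rely on the identity $\bar{S}_i\bar{\w}_i=f_i^{-1}S_i\w_i$, and both finish by an induction that identifies $\sum_{j=1}^{i-1}\bar{S}_{i-1:j}^{>}\bar{\w}_j y_j$ with the state $\boldchi_i$. The only cosmetic difference is that the paper phrases the last step as verifying $L^{-1}L=I_N$ (computing $(L^{-1}\y)_i=x_i$), whereas you phrase it as unrolling the inverted system and matching coefficients against $L^{-1}$; the underlying algebra is identical.
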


\begin{proof}
    The conditions $f_i>0$ and $d_i>0$ guarantee $f_i-\c_i^T\w_i>0$ and the existence of $(I_p-\w_i f_i^{-1}\c_i^T)^{-1}$. 
    To see this, recall \Cref{alg:Chol} that if $f_i=(\c_i^T\tilde{\w}_i+d_i)^{1/2}$ and $\w_i=\tilde{\w}_i/f_i$ for some $\tilde{\w}_i\in\R^p$, then 
    \begin{align}
      f_i-\c_i^T\w_i = \frac{f_i^2-\c_i^T\tilde{\w}_i}{f_i}=\frac{\c_i^T\tilde{\w}_i+d_i-\c_i^T\tilde{\w}_i}{f_i}=\frac{d_i}{f_i} > 0. \label{eq:dfratio}
    \end{align}
    Moreover, by matrix inversion lemma, in $\bar{\w}_i$,
    \begin{align}\label{eq:matrix_inverse_w}
        (I_p-\w_if_i^{-1}\c_i^T)^{-1} = I_p + \w_i(f_i-\c_i^T \w_i)^{-1}\c_i^T,
    \end{align}
    so the condition $f_i-\c_i^T\w_i>0$ implies the existence of $(I_p-\w_if_i^{-1}\c_i^T)^{-1}$. 
 
    Now we show that $L$ in \cref{eq:chol_L} and $L^{-1}$ in \cref{eq:chol_Linv} satisfy $L^{-1}L=I_N$.
    It is equivalent to show for any $\x\in\R^N$, $L^{-1}L\x=\x$ with $\y:=L\x$ satisfying \cref{eq:Lx=y_system}.
    We show it by computing $(L^{-1}\y)_i$ for each $i=1,\ldots,N$.
    In the proof, we use the important relation 
    $\bar{S}_i \bar{\w}_i = f_i^{-1} S_i \w_i$ for $i=1,\ldots,N$.
    Recall \cref{eq:Lx=y_system} that $\boldchi_i= \sum_{j=1}^{i-1}S_{i-1:j}^> \w_j x_j
    =S_{i-1}\w_{i-1}x_{i-1}+S_{i-1}\boldchi_{i-1}$ for $i>1$ and $y_i=\c_i^T \boldchi_i + f_i x_i$, we first show
    \begin{align}
        \sum_{k=1}^i \bar{S}_{i:k}^> \bar{\w}_k y_k = \sum_{k=1}^i {S}_{i:k}^> {\w}_k x_k = \boldchi_{i+1}, \quad i=1,\ldots,N-1, \label{eq:proof_inv_chi}
    \end{align}
    by induction.
    When $i=1$, the left hand side $\bar{S}_1\bar{\w}_1 y_1
    =S_1\w_1x_1=\boldchi_2$.
    Suppose \cref{eq:proof_inv_chi} holds for some $i\leq N-2$, then we show \cref{eq:proof_inv_chi} also holds for $i+1$.
    Note that
    \begin{align*}
        \sum_{k=1}^{i+1} \bar{S}_{i+1:k}^> \bar{\w}_k y_k &= \bar{S}_{i+1} \bar{\w}_{i+1}y_{i+1} + \bar{S}_{i+1} \sum_{k=1}^{i} \bar{S}_{i:k}^> \bar{\w}_k y_k 
        =\bar{S}_{i+1} \bar{\w}_{i+1}y_{i+1} + \bar{S}_{i+1} \boldchi_{i+1}
    \end{align*}
    by induction hypothesis. 
    By $\boldchi_{i+2}=S_{i+1}\w_{i+1}x_{i+1}+S_{i+1}\boldchi_{i+1}$ and $y_{i+1}=\c_{i+1}^T\boldchi_{i+1}+f_{i+1}x_{i+1}$, we have
    \begin{align*}
        & \sum_{k=1}^{i+1} \bar{S}_{i+1:k}^> \bar{\w}_k y_k-\boldchi_{i+2} = \bar{S}_{i+1} \bar{\w}_{i+1}y_{i+1} - S_{i+1}\w_{i+1}x_{i+1} + (\bar{S}_{i+1}-S_{i+1}) \boldchi_{i+1} \\
        & \quad = f_{i+1}^{-1}S_{i+1}\w_{i+1}(\c_{i+1}^T\boldchi_{i+1}+f_{i+1}x_{i+1}) - S_{i+1}\w_{i+1}x_{i+1} - S_{i+1}\w_{i+1}f_{i+1}^{-1}\c_{i+1}^T \boldchi_{i+1},
    \end{align*}which equals to $\0_p$.
    Therefore, by induction, we have shown \cref{eq:proof_inv_chi}.

    Then we prove $(L^{-1}\y)_i=x_i$ for all $i=1,\ldots,N$.
    When $i=1$, $(L^{-1}\y)_1 = \bar{f}_1(\c_1^T\boldchi_1 + f_1 x_1)=\bar{f}_1 f_1 x_1$ by $\boldchi_1=\0_p$.
    For $i>1$, 
    \begin{align*}
        (L^{-1}\y)_{i} &= \sum_{k=1}^{i}[L^{-1}(i,k)] y_k 
        = f_i^{-1} y_i - f_i^{-1} \c_i^T \sum_{k=1}^{i-1}\bar{S}_{i-1:k}^> \bar{\w}_k y_k 
        \overset{\cref{eq:proof_inv_chi}}{=} \frac{1}{f_{i}} \sbk{y_{i} - \c_i^T \boldchi_{i}}
    \end{align*}which is $x_i$.
    This completes the proof.
\end{proof}

As shown by \cref{eq:dfratio,eq:matrix_inverse_w}, the two conditions $f_i>0$ and $d_i>0$ guarantee the existence of the inverse $(I_p-\w_if_i^{-1}\c_i^T)^{-1}$.
They are mild, since $f_i>0$ by positive definiteness, and in KRM, $D=\gamma I_N$ for some $\gamma>0$ such that $d_i=\gamma>0$. 
On the other hand, if $d_i=0$ for all $i$, the Cholesky factor $L$ is $p$-semiseparable and cannot attain the form \cref{eq:chol_Linv}. 
By \cite[Theorem 8.45]{Vandebril2008a}, $L^{-1}$ is a lower-triangular band matrix with bandwidth $p$.
The details are skipped here.

\begin{remark}\label{rmk:dpp_gen}
  The condition $\d\in\R_{++}^N$ also appears in the GR-based algorithm.
  Suppose $A\in\calG_{N,p}$ has GR \cref{eq:gen-p-sym-semi} and $\d\in\R_{++}^N$, then the Cholesky factor $L=\tril(U Q^T,-1)+ \diag(\g)$ for some $\g\in\R_{++}^N$. 
  \cite[Theorem 4.1]{andersen2020smoothing} shows that $L^{-1}=\tril(YZ^T,-1)+\diag(\g)^{-1}$, whose existence relies on the non-singularity of $QY^T-I_N$, which guaranteed by  $g_i-\u_i^T\q_i=d_i/g_i>0$, analogous to \cref{eq:dfratio}. 
  Here $\u_i$ and $\q_i$ are the $i$th row of $U$ and $Q$, respectively. 
\end{remark}
Obviously,  computing $L^{-1}$ through its implicit representation in \cref{eq:inv_rep}  requires $\calO(Np^2)$ flops, as it involves the inverse $(I_p-\w_if_i^{-1}\c_i^T)^{-1}$ obtained through \cref{eq:matrix_inverse_w}, which may be inaccurate when $f_i$ is close to $\c_i^T\w_i$.
However, it is worth mentioning that, 
we find a different route to compute the trace of $L^{-1}$ with $\calO(Np^2)$ flops and without using the implicit representation of $L^{-1}$ in \cref{eq:inv_rep}, as detailed in the following section.

\subsection{Algorithms concerning the trace of inversion}\label{sec:alg-tr-inv}
Assume the same settings as \Cref{sec:Chol} and $\d\in\R_{++}^N$.
\subsubsection{The diagonal elements of $(A+D)^{-1}$}
Let $\b\in\R^N$ with $b_i$ the $i$th diagonal element of $(A+D)^{-1}$, i.e., $b_i=\e_i^T(A+D)^{-1}\e_i=\norm{L^{-1}\e_i}_2^2$, where $L$ is the Cholesky factor in \cref{eq:chol_L}. By the representation \cref{eq:inv_rep} of $L^{-1}$ in \Cref{thm:Chol_inv}, we have
\begin{align*}
  b_i&=f_i^{-2} + \sum_{j=i+1}^{N}[\bar{\w}_i^T (\bar{S}_{j-1:i}^>)^T\bar{\c}_j][\bar{\c}_j^T \bar{S}_{j-1:i}^> \bar{\w}_i] 
  = f_i^{-2} + f_i^{-2}{\w}_i^T {S}_{i}^TP_i {S}_i{\w}_i
\end{align*} 
where $P_i=
  \sum_{j=i+1}^{N}(\bar{S}_{j-1:i+1}^>)^T\bar{\c}_j\bar{\c}_j^T\bar{S}_{j-1:i+1}^>
\in\R^{p\times p}$ for $i\leq N-1$ and $\0_{p\times p}$ for $i=N$.
Define $R_i=S_i^T P_i S_i$ and $\p_i=R_i \w_i$, then $b_i=f_i^{-2}(1+\w_i^T\p_i)$, and  
\begin{align*}
  P_i&=\bar{\c}_{i+1}\bar{\c}_{i+1}^T + \bar{S}_{i+1}^T P_{i+1} \bar{S}_{i+1} 
=b_{i+1}\c_{i+1}\c_{i+1}^T - f_{i+1}^{-1}(\c_{i+1}\p_{i+1}^T + \p_{i+1}\c_{i+1}^T) + R_{i+1}.
\end{align*}   
\Cref{alg:diagADinv} computes $\b$ recursively in $\calO(Np^2)$ flops.

\begin{algorithm}
  \caption{Diagonal elements of $(A+D)^{-1}$, where $A+D \succ 0$ and $\d\in\R_{++}^N$.}
  \label{alg:diagADinv}
  \begin{algorithmic}
  \STATE{\textbf{Input:} Representation $\c_i$, $\s_i$, $\w_i$ and $\f$ of $L$ in \cref{eq:chol_L};}  
  \STATE{\textbf{Output:} $\b\in\R^{N}$ such that $b_i=\e_i^T(A+D)^{-1}\e_i=\norm{L^{-1}\e_i}_2^2$;} 
  \STATE{Initialize $P\leftarrow \0_{p\times p}$, $R\leftarrow \0_{p\times p}$, $\p\leftarrow\0_p$;}
  \STATE{$b_N\leftarrow f_N^{-2}$;} 
  \FOR{$i=N-1,\ldots,1$}
    \STATE{$P\leftarrow b_{i+1}\c_{i+1}\c_{i+1}^T - f_{i+1}^{-1}(\c_{i+1}\p^T + \p\c_{i+1}^T) + R$;}
    \STATE{$R \leftarrow S_i^T P S_i$;\quad $\p\leftarrow R \w_i$; \quad $b_i\leftarrow f_i^{-2}(1+\w_i^T\p)$;}
  \ENDFOR
  \end{algorithmic}
\end{algorithm}

\subsubsection{The trace of $(A+D)^{-1}(\tilde{A}+\tilde{D})$}
Let $\tilde{\d}\in\R^N$, $\tilde{D}=\diag(\tilde{\d})$, and $\tilde{A}\in\calS_{N,\tilde{p}}$ with GvR $\tilde{c}_{i,k}, \tilde{s}_{i,k}$ and $\tilde{\nu}_{i,k}$ for $i=1,\ldots,N$, $k=1,\ldots,\tilde{p}$ such that          
\begin{align}\label{eq:tildeA}
  \tilde{A}(i,j) = 
  \begin{cases}
    \tilde{\c}_i^T \tilde{S}_{i-1:j}^{>} \tbv_j &\text{ if } 1\leq j\leq i\leq N,\\
    \tilde{\c}_j^T \tilde{S}_{j-1:i}^{>} \tbv_i &\text{ if } 1\leq i<j\leq N,  
  \end{cases} 
\end{align}
where $\tilde{\c}_{\bullet}$, $\tilde{S}_{\bullet}$ and $\tbv_{\bullet}$ have the same form as the ones in \cref{eq:giv_mat_p}. 
By \cref{eq:chol_Linv}, 
\begin{align*}
  \tr((A+D)^{-1}(\tilde{A}+\tilde{D}))&=\tr(L^{-1}(\tilde{A}+\tilde{D})L^{-T})=\sum_{i=1}^{N} q_i,
\end{align*}where $q_i:=\e_i^T L^{-1} (\tilde{A}+\tilde{D}) L^{-T}\e_i$.
Denote $\tilde{A}_{i}+\tilde{D}_{i}$ the leading principal minor of $\tilde{A}+\tilde{D}$ of order $i$, and $\tilde{V}_{i}:=\begin{bmatrix}
        \tilde{S}_{i:1}^>\tbv_1 & \tilde{S}_{i:2}^>\tbv_2 & \cdots & \tilde{S}_{i}\tbv_{i}
    \end{bmatrix}\in\R^{\tilde{p}\times i}$ and $\bar{W}_i:=\begin{bmatrix}
        \bar{S}_{i:1}^>\bar{\w}_1 & \bar{S}_{i:2}^>\bar{\w}_2 & \cdots & \bar{S}_{i}\bar{\w}_{i}
    \end{bmatrix}\in\R^{p\times i}$.
Since $L^{-T}\e_i=\begin{bmatrix}
    \bar{\c}_i^T\bar{W}_{i-1} & \bar{f}_i & \0_{N-i}^T
\end{bmatrix}^T$, $q_i$ only contains the first $i$th element of $L^{-T}\e_i$,
\begin{align*}
    q_i &= \begin{bmatrix}
    \bar{\c}_i^T\bar{W}_{i-1} & \bar{f}_i 
\end{bmatrix} \begin{bmatrix}
    \tilde{A}_{i-1} + \tilde{D}_{i-1} & \tilde{V}_{i-1}^T\tilde{\c}_i \\
    \tilde{\c}_i^T\tilde{V}_{i-1} & \tilde{\c}_i^T\tbv_i+\tilde{d}_i
\end{bmatrix} \begin{bmatrix}
    \bar{W}_{i-1}^T \bar{\c}_i \\ \bar{f}_i
\end{bmatrix} \\
    &=\bar{\c}_i^T \bar{W}_{i-1} (\tilde{A}_{i-1}+\tilde{D}_{i-1})\bar{W}_{i-1}^T \bar{\c}_i + 2\bar{f}_i \tilde{\c}_i^T \tilde{V}_{i-1} \bar{W}_{i-1}^T \bar{\c}_i + \bar{f}_i^2(\tilde{\c}_i^T\tbv_i+\tilde{d}_i), \ i\geq 2,
\end{align*}
and $q_1={f}_1^{-2}(\tilde{\c}_1^T\tbv_1+\tilde{d}_1)$. 
For simplicity, we define $R_i:=\tilde{V}_i\bar{W}_i^T\in\R^{\tilde{p}\times p}$ and symmetric $P_i=\bar{W}_i(\tilde{A}_i+\tilde{D}_i)\bar{W}_i^T\in\R^{p\times p}$ with $R_0=\0_{\tilde{p}\times p}$ and $P_0=\0_{p\times p}$. 
We further define vectors $\p_i=P_{i-1}\c_i\in\R^{p}$ and $\r_i=R_{i-1}^T\tilde{\c}_i\in\R^{p}$. Then $q_i$ can be rewritten as 
\begin{align*}
  q_i=f_i^{-2}(\c_i^T\p_i-2\r_i^T\c_i+\tilde{\c}_i^T\tbv_i+\tilde{d}_i),
\end{align*}
where by \cref{eq:inv_rep}, $R_i$ and $P_i$ have recursive relations  
\begin{align*}
  R_i&=\sum_{j=1}^{i}\tilde{S}_{i:j}^>\tbv_j\bar{\w}_j^T (\bar{S}_{i:j}^>)^T 
  =\tilde{S}_{i}\tbv_i\bar{\w}_i^T\bar{S}_{i}^T + \tilde{S}_iR_{i-1}\bar{S}_i^T\\
  &=\tilde{S}_i\mbk{R_{i-1}+f_i^{-1}(\tbv_i-R_{i-1}\c_i)\w_i^T}S_i,\\
  P_i&=\begin{bmatrix}
    \bar{S}_i\bar{W}_{i-1} & \bar{S}_i\bar{\w}_i 
  \end{bmatrix}\begin{bmatrix}
    \tilde{A}_{i-1} + \tilde{D}_{i-1} & \tilde{V}_{i-1}^T\tilde{\c}_i \\
    \tilde{\c}_i^T\tilde{V}_{i-1} & \tilde{\c}_i^T\tbv_i+\tilde{d}_i
\end{bmatrix}\begin{bmatrix}
  \bar{W}_{i-1}^T\bar{S}_i^T \\ \bar{\w}_i^T\bar{S}_i^T
\end{bmatrix} \\
&=\bar{S}_i\mbk{P_{i-1}+R_{i-1}^T\tilde{\c}_i\bar{\w}_i^T + \bar{\w}_i\tilde{\c}_i^T R_{i-1} + \bar{\w}_i(\tilde{\c}_i^T\tbv_i+\tilde{d}_i)\bar{\w}_i^T}\bar{S}_i^T\\
&=S_i\cbk{
  P_{i-1} + f_i^{-1}\mbk{(\r_i-\p_i)\w_i^T + \w_i(\r_i^T-\p_i^T)} + q_i\w_i\w_i^T}S_i,
\end{align*}
for $i=1,\ldots,N$.
\Cref{alg:traceADtAtD} implements this calculation in $\calO (Np\tilde{p})$ flops. Algorithm \ref{alg:diagADinv} is a special case of this algorithm by letting $\tilde{A}=\0_{N\times N}$ and $\tilde{D}=I_N$, i.e., all the $\tilde{c}_i$, $\tilde{\s}_i$, and $\tbv_i$ become $\0_{\tilde{p}}$, and changing the output $b$ by $q_i$ for $i=1,\ldots,N$, as the diagonal elements of $(A+D)^{-1}$.  

\begin{algorithm}
  \caption{Trace of $L^{-1}(\tilde{A}+\tilde{D})L^{-T}$}
  \label{alg:traceADtAtD}
  \begin{algorithmic}
  \STATE{\textbf{Input:} Representation $\c_i$, $\s_i$, $\w_i\in\R^p$ and $\f\in\R^N$ for $L$ in \cref{eq:chol_L}; $\tilde{\c}_i$, $\tilde{\s}_i$, and $\tbv_i\in\R^{\tilde{p}}$ for $\tilde{A}$ in \cref{eq:tildeA}; and $\tilde{\d}\in\R^N$ such that $\tilde{D}=\diag(\tilde{\d})$;}  
  \STATE{\textbf{Output:} $b\in\R$ such that $b=\tr(L^{-1}(\tilde{A}+\tilde{D})L^{-T})$;} 
  \STATE{Initialize $P\leftarrow \0_{p\times p}$, $R\leftarrow \0_{\tilde{p}\times p}$;}
  \FOR{$i=1,\ldots,N$}
    \STATE{$\p\leftarrow P\c_i$; \quad $\r\leftarrow R^T\tilde{\c}_i$;}
    % \STATE{$\r\leftarrow R^T\tilde{\c}_i$;}
    \STATE{$q_i\leftarrow f_i^{-2}(\c_i^T\p-2\r^T\c_i+\tilde{\c}_i^T\tbv_i+\tilde{d}_i)$;}
    \STATE{$R\leftarrow \tilde{S}_i\mbk{R+f_i^{-1}(\tbv_i-R \c_i)\w_i^T}S_i$;}
    \STATE{$P\leftarrow S_i\cbk{
  P + f_i^{-1}\mbk{(\r-\p)\w_i^T + \w_i(\r^T-\p^T)} + q_i\w_i\w_i^T}S_i$;}
  \ENDFOR
  \STATE{$b\leftarrow\sum_{i=1}^{N}q_i;$}
  \end{algorithmic}
\end{algorithm}

\Cref{alg:diagADinv,alg:traceADtAtD} do not use the implicit representation of $L^{-1}$ in \cref{eq:chol_Linv}.
Therefore, when some $d_i$ are close to zero, these two algorithms are numerically stable. 
In contrast,  \cite[Algorithm 4.5]{andersen2020smoothing} may fail in this case, which uses the GR of $L^{-1}$ and has a computational complexity of $\calO(Np^3)$.
Although one can compute $\b=\sum_{i=1}^N\norm{L^{-1}\e_i}^2$ based on the GR of $L$ via \cite[Algorithm C.3]{andersen2020smoothing}, it costs $\calO(N^2 p)$ flops.

\section{Experimental results}
\label{sec:experiments}
In this section, we run Monte Carlo simulations to test the numerical stability, accuracy and efficiency of the proposed implementation of algorithms using the GvR obtained by the analytic form ($\GvR$) or the GvR obtained by numerical computation from the GR of a kernel matrix via \cref{eq:gen2giv_G,eq:gen2giv_vhat} ($\GvRt$). The proposed implementation of algorithms is compared with 
(i) the MATLAB built-in functions for the quantities in \Cref{tab:krm_algorithms} ($\MRef$); and 
(ii) GR-based algorithms in \cite{andersen2020smoothing}, where $\tr(M_{\boldeta,\gamma}^{-1})$ is computed via \cite[Algorithm 4.5]{andersen2020smoothing} ($\GR$) or by more stable but computationally more expensive $\sum_{i=1}^N\norm{L_{\boldeta,\gamma}^{-1}\e_i}^2$ via \cite[Algorithm C.3]{andersen2020smoothing} ($\GRs$). 
We choose GCV \cref{eq:GCV_opt} as hyper-parameter optimization criterion, done by first choosing an initial point of $(\boldeta,\gamma)$ via grid search, and then applying the MATLAB function $\mathtt{fmincon}$ with the interior-point algorithm to optimize the hyper-parameters.

We consider the KRM for the DT case. In particular, we consider the following two choices of the input $u(t)$ 
\begin{enumerate}
  \item[(S1)] unit impulse signal $u(t)=\1(t=0)$ in Example~\ref{ex:unitImp}, and
  \item[(S2)] exponential signal $u(t)=e^{-\alpha t}$ with $\alpha=0.5$ in \cref{eq:input-exp};
\end{enumerate}
and the DC kernel \cref{eq:DC}: In (S1), $\Psi_{\boldeta}=K_{\boldeta}^{\DC}\in\calS_{N,1}$ such that $\Psi_{\boldeta}$ has GR \cref{eq:gen_DC} and analytic form of GvR \cref{eq:Giv_DC_equspace}; and
in (S2), $\Psi_{\boldeta}\in\calS_{N,2}$ has GR \cref{eq:output_kernel_input_exp_GR} and GvR \cref{eq:output_kernel_input_exp_GvR_DT} in (S2).
The noise $\varepsilon(t)$ follows from a zero-mean Gaussian distribution with signal-to-noise ratio $\SNR=10$.

\begin{figure}[t]
  \centering
  \includegraphics[width=1.0\textwidth]{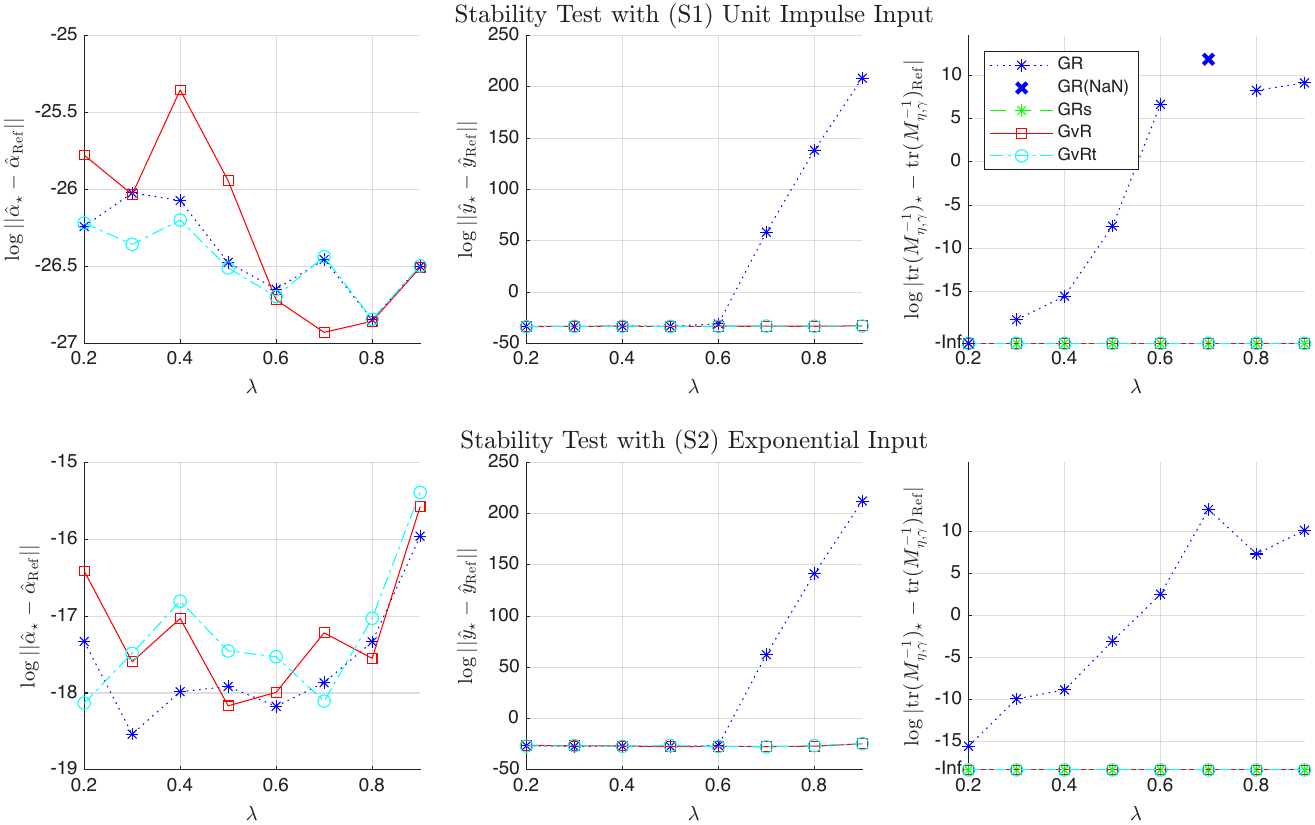}
  \caption{The logarithms of the averaged difference norms with respect to $\lambda$ using methods $\star\in\{\GR,\GRs,\GvR,\GvRt\}$ while fixing $(c,\rho,\gamma)=(1,0.6,10^{-4})$. 
  In the first two columns, $\GR$ and $\GRs$ are the same.  
  The first row uses the unit impulse input (S1) where $\GR$ returns NaN when $\lambda=0.7$, and the second row uses the exponential input $u(t)=e^{-0.5t}$ (S2).
  The experiments are repeated 80 times.}
  \label{fig:stab_alpha05}
\end{figure}

\begin{table}[t]
\centering
\caption{Averaged model fits for accuracy and efficiency test.}
\label{tab:sim_avg_fit}
\begin{tabular}{cc|ccccc}
 &  & $\GR$ & $\GRs$ & $\GvR$ & $\GvRt$ & $\MRef$ \\ \hline
Accuracy & Unit Impulse (S1) & 92.38 & 96.00 & 98.14 & 98.08 & 98.13 \\ 
& Exponential (S2) & 67.57 & 71.21 & 74.45 & 73.86 & 74.06 \\
Efficiency & Unit Impulse (S1) & 91.95 & 95.80 & 98.14 & 97.68 & 98.14 \\ 
& Exponential (S2) & 82.20 & 82.43 & 83.63 & 83.38 & 83.64 \\
\hline
\end{tabular}
\end{table}

\begin{figure}[t]
  \centering
  \includegraphics[width=1.0\textwidth]{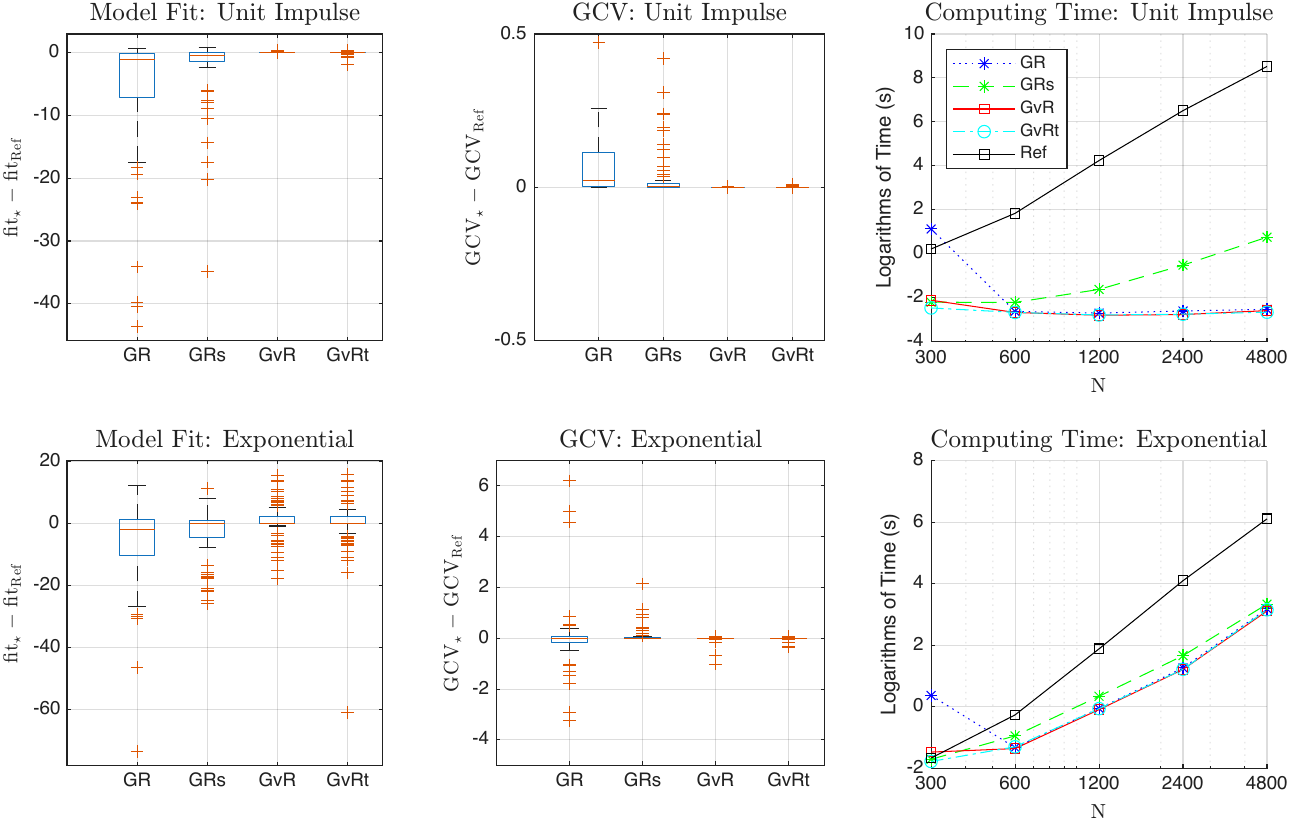}
  \caption{The first column shows the distributions of the model fit difference for $\GR$, $\GRs$, $\GvR$, and $\GvRt$, while the second column shows the distributions of the optimized GCV objectives for the four methods over 80 repeated experiments.
  The third column displays the logarithms of the averaged computation time (in seconds) for evaluating the GCV 200 times with respect to $N$ over 10 repeats, where the simulation is run on a Mac mini with Apple M4 Pro chip with 14-core CPU and 48 GB unified memory.
  }
  \label{fig:acc_eff_alpha05}
\end{figure}

\subsection{Stability and Accuracy Test}\label{sec:sim_stability_accuracy}
We generate 80 DT linear systems of 10th order with the moduli of all the poles within $[0.1,0.9]$.
Let $N=600$.
We test the numerical stability of computing $\hat{\boldalpha}$, $\hat{\y}=\Psi_{\boldeta}\hat{\boldalpha}$, and $\tr(M_{\boldeta,\gamma}^{-1})$ by fixing the hyper-parameter $(c,\rho,\gamma)=(1,0.6,10^{-4})$ and varying $\lambda=0.2,0.3,\ldots,0.9$.
\Cref{fig:stab_alpha05} displays the logarithms of the 80-repetition averaged $\norm{\hat{\boldalpha}_{\star}-\hat{\boldalpha}_{\MRef}}$ and $\norm{\hat{\y}_{\star}-\hat{\y}_{\MRef}}$ for $\star\in\{\GR,\GvR,\GvRt\}$ and $\abs{\tr(M_{\boldeta,\gamma}^{-1})_{\star}-\tr(M_{\boldeta,\gamma}^{-1})_{\MRef}}$ for $\star\in\{\GR,\GRs,\GvR,\GvRt\}$.
The performance of $\GR$ and $\GvR$ are similar in computing $\hat{\boldalpha}$ in both inputs (S1) and (S2), but GvR-based algorithms $\GvR$ and $\GvRt$ are more stable in computing $\hat{\y}$ and $\tr(M_{\boldeta,\gamma}^{-1})$ when $\lambda>\rho$, aligned with previous analysis, although $\GvRt$ is based on top of the GR.
Notably, when $\lambda=0.7$, $\GR$ fails to compute $\tr(M_{\boldeta,\gamma}^{-1})$ and return NaN due to the ill-conditioned GR of $L_{\boldeta,\gamma}^{-1}$. 
We also test the stability by varying $\alpha=0.5,1.0,1.5$ in (S2) and the results are displayed in \Cref{fig:stab_changeAlpha} of the Appendix~\ref{sec:sim_additional}.
For larger $\alpha$, i.e., faster decay rate, $\GR$-based algorithms become more unstable for a wider range of $\lambda$.

Next, we compare the accuracy by the model fit for the impulse response $\hat{g}$:
\begin{align*}
    \fit=100\left(1-\left[\frac{\sum_{k=1}^N|g_0(k)-\hat{g}(k)|}{\sum_{k=1}^N|g^0(k)-\bar{g}^0|}\right]^{1/2}\right),\quad \bar{g}^0=\frac{1}{N}\sum_{k=1}^Ng^0(k),
\end{align*}
where $g^0(k)$ and $\hat{g}(k)$ are the true and the estimated impulse response at the $k$th order, respectively.
Fix $c=1$, and use grid search to optimize $(\rho,\lambda,\gamma)$.
The averaged model fits out of 80 simulation runs for the five methods are shown in \Cref{tab:sim_avg_fit}, where our $\GvR$ and $\GvRt$ have values closer to $\MRef$ than $\GR$ and $\GRs$.
To display the deviation, the first two columns of \Cref{fig:acc_eff_alpha05} show the distributions of the model fit difference $(\fit_{\star}-\fit_{\MRef})$ and optimized GCV objectives $(\GCV_{\star}-\GCV_{\MRef})$ for $\star\in\{\GR,\GRs,\GvR,\GvRt\}$.
For model (S1), our $\GvR$ and $\GvRt$ have the highest accuracy with minimal deviation from the baseline result $\MRef$, whereas $\GR$ and $\GRs$ can significantly deviate from $\MRef$, suggesting their instability and poor accuracy.
For model (S2), although all three methods exhibit some deviation from $\MRef$, the values of $\GvR$ and $\GvRt$ still concentrate around zero more tightly than those of $\GR$ and $\GRs$ with shorter box height.
Comparing the GCV values, $\GvR$ and $\GvRt$ yield results closer to $\MRef$ with less variability than the $\GR$ and $\GRs$ in (S1), while in (S2), our GvR-based methods are still generally more accurate than $\GR$.
It is worth mentioning that in (S2), the maximum value $(\GCV_{\GR}-\GCV_{\MRef})$ over the 80 simulation runs is 8943, a huge derivation unplotted in \Cref{fig:acc_eff_alpha05}.
The regime of hyper-parameters in which the $\GR$ and $\GRs$-based algorithms face numerical instability has a neglectable effect on the performance of $\GvRt$, showing the robustness of the procedure \cref{eq:gen2giv_G,eq:gen2giv_vhat}.

\subsection{Efficiency Test}\label{sec:sim_efficiency}
To test the efficiency, we generate data from the first 10 systems out of the 80 systems in \Cref{sec:sim_stability_accuracy} with $N=300,600,1200,2400,4800$.
For each $N$, we identify the system and record the average accumulative computation time for evaluating the GCV \cref{eq:GCV_opt} 200 times in the initial grid search (with size 200).
The averaged model fits are reported in \Cref{tab:sim_avg_fit}.
The third column of \Cref{fig:acc_eff_alpha05} displays the averaged computation time with respect to varying $N$, indicating the superior efficiency of our $\GvR$ and $\GvRt$ compared to $\MRef$.

\section{Conclusions}
\label{sec:conclusions}

The existing works on efficient algorithms for the KRSysId almost all exploit the semiseparable structure of the kernel matrix and are based on its GR. 
However, the GR-based algorithms can be numerically unstable or lose the accuracy significantly. 
To overcome this issue, we proposed to use the GvR of semiseparable kernels in this paper. 
In particular, we first derived the GvR of some commonly used kernel matrices and output kernel matrices in the KRSysID.
We then derived the GvR-based algorithms and showed that they have a computational complexity of at most $\calO(Np^2)$. 
Monte Carlo simulation result shows that the proposed implementation of algorithms is more stable, more accurate, and more efficient than the state-of-art GR-based ones.

\appendix

\section{Details of the Two Examples in \Cref{sec:intro}}\label{sec:two_eg}
This section provides details of the two examples mentioned in \Cref{sec:intro} about the matrix-vector product $K_{\boldeta}\x$ for $\x\in\R^N$ and inverse Cholesky factor $L_{\boldeta,\gamma}^{-1}$ of $M_{\boldeta,\gamma}=K_{\boldeta}+\gamma I_N$ for $p$-GRS $K_{\boldeta}\in\R^{N\times N}$
\begin{align*}
    K_{\boldeta} = \tril(UV^T) + \triu(VU^{T},1)
\end{align*}where its GR $U=[\boldmu_1,\ldots,\boldmu_N]^T$ and $V=[\boldnu_1,\ldots,\boldeta_N]^T\in\R^{N\times p}$.
We use $(\cdot)_{\GR}$, $(\cdot)_{\GvR}$, and $(\cdot)_{\MRef}$ to denote the results using GR-based algorithms, GvR-based algorithms and MATLAB built-in operations with 16 decimal digits of precision (double precision), and $(\cdot)_{\GR(\High)}$ for GR-based algorithms with 50 decimal digits.

In the first example, we compute $\y=K_{\boldeta}\x$.
To be specific, recall $x_i$ and $y_i$ the $i$th element of $\x$ and $\y$, respectively.
Applying \cite[Algorithm 4.1]{andersen2020smoothing} to compute $\y=K_{\boldeta}\x$ yields that
\begin{equation*}\label{eq:two_eg_Kx}
    \begin{aligned}
        y_i &= \boldmu_i^T \bar{\boldnu}_i + \boldnu_i^T \bar{\boldmu}_i, \\
    \bar{\boldmu}_i & = \sum_{j=i+1}^N \boldmu_j x_j = \bar{\boldmu}_{i-1} - \boldmu_i x_i, \quad \bar{\boldmu}_0=U^T\x, \ \bar{\boldmu}_{N}=\0_p,\\
    \bar{\boldnu}_i &= \sum_{j=i}^{i} \boldnu_j x_j = \bar{\boldnu}_{i-1} + \boldnu_i x_i, \quad \bar{\boldnu}_0=\0_p,
    \end{aligned}
\end{equation*}for $i=1,\ldots,N$.
Recall that we use $K_{\boldeta}^{\DC}$ with $N=5$, $t_i=i$, $\lambda=0.1$, $\rho=10^{-7}$, and $c=1$ in \cref{eq:gen_DC}.
In this case, since $\lambda\rho=10^{-8}$, $\lambda/\rho=10^6$, we have 
\begin{align*}
    U &= \begin{bmatrix}
        10^{-8} & 10^{-16} & 10^{-24} & 10^{-32} & 10^{-40}
    \end{bmatrix}^T, \\
    V &= \begin{bmatrix}
        10^{6} & 10^{12}&10^{18}& 10^{24}&10^{30}
    \end{bmatrix}^T.
\end{align*}
By the above procedure, the elements $y_i$ of $\y=K_{\boldeta}^{\DC}\x$ are
\begin{equation}\label{eq:two_eg_y15}
    \begin{aligned}
    y_1 &
    = 10^{-8}(10^6x_1) +10^6(10^{-16}x_2 + 10^{-24}x_3 + 10^{-32}x_4 + 10^{-40}x_5), \\
    y_2 &
    = 10^{-16}(10^6x_1+10^{12}x_2) +10^{12}(10^{-24}x_3 + 10^{-32}x_4 + 10^{-40}x_5), \\
    y_3 &
    = 10^{-24}(10^6x_1+10^{12}x_2+10^{18}x_3) +10^{18}(10^{-32}x_4 + 10^{-40}x_5), \\
    y_4 &
    = 10^{-32}(10^6x_1+10^{12}x_2+10^{18}x_3+10^{24}x_4) +10^{24}(10^{-40}x_5), \\
    y_5 &=10^{-40}(10^6 x_1 + 10^{12}x_2 + 10^{18}x_3 + 10^{24} x_4 + 10^{30} x_5).
    \end{aligned}
\end{equation}
Their products span enormous range.
For example, when $\x=[-1,1,-1,1,-1]^T$ with mild magnitude, the GR-based result has a much larger relative error measured by vector 2-norm compared to our GvR-based one using \Cref{alg:Ax}
\begin{align*}\label{eq:two_eg_Kx_relative_error}
    \frac{\norm{\y_{\GR}-\y_{\MRef}}_2}{\norm{\y_{\MRef}}_2} \approx \num{6.224530e+07}, \quad \frac{\norm{\y_{\GvR}-\y_{\MRef}}_2}{\norm{\y_{\MRef}}_2} \approx \num{1.421267e-08}.
\end{align*}

In the second example, for a $p$-GRS matrix $K_{\boldeta}\in\R^{N\times N}$, then by \cite[Algorithm 4.3]{andersen2020smoothing}, the Cholesky factor $L_{\boldeta,\gamma}$ of $M_{\boldeta,\gamma}=K_{\boldeta}+ \gamma I_N=L_{\boldeta,\gamma}L_{\boldeta,\gamma}^T$ has GR 
\begin{align*}\label{eq:two_eg_L_GR}
    L_{\boldeta,\gamma} = \tril(UW^T,-1)+\diag(\c), \quad W\in\R^{N\times p}, \ \c\in\R_{++}^p,
\end{align*}and by \cite[Algorithm 4.4]{andersen2020smoothing}, its inverse $L_{\boldeta,\gamma}^{-1}$ has GR
\begin{align*}\label{eq:two_eg_Linv_GR}
    L_{\boldeta,\gamma}^{-1} = \tril(YZ^T,-1)+\diag(\c)^{-1},
\end{align*}where $Y=L_{\boldeta,\gamma}^{-1}U$ and $Z=L_{\boldeta,\gamma}^{-T}W(Y^TW-I_p)^{-1}$.
However, when $\gamma\approx 0$, the condition numbers $\kappa(M_{\boldeta,\gamma})$ and $\kappa(Y^T W - I_p)$ will be extremely large.
As a result, the implicit GR $(Y,Z)$ is inaccurate.
In our example, $K_{\boldeta}^{\SS}$ with $N=5$, $t_i=i$, $p=2$, $\rho=0.1$, and $c=1$ in \cref{eq:gen_SS}, and $\gamma=10^{-8}$.
Then
\begin{align*}\label{eq:two_eg_M_Y_cond}
    \kappa(M_{\boldeta,\gamma}) = \num{3.191245e+04}, \quad \kappa(Y^TW-I_2) \approx \num{6.890193e+16},
\end{align*}
and as a consequence, the relative error measured by spectral norm
\begin{align*}\label{eq:two_eg_Z_cond}
    \frac{\norm{Z_{\GR}-Z_{\GR(\High)}}_2}{\norm{Z_{\GR(\High)}}_2} \approx \num{1.002317}.
\end{align*}

Moreover, having accurate $Y$ and $Z$ 
does not allow us to accurately reconstruct, for $1\leq j < i \leq N$, the $(i,j)$-entry $\y_i^T\z_j$ of $\tril(L_{\boldeta,\gamma}^{-1},-1)$, where $\y_i,\z_j\in\R^p$ are the $i$th and $j$th column of $Y^T$ and $Z^T$, respectively.
To see this, recall that the relative condition number associated with this inner product \cite[Section~3]{higham2002accuracy} is
\begin{align}\label{eq:inner_cond}
    \frac{|\y_i|^T|\z_j|}{|\y_i^T\z_j|},
\end{align}where $\abs{\cdot}$ takes the element-wise absolute value, which may be extremely large as well,
making the computation of entries of $\tril(L_{\boldeta,\gamma}^{-1},-1)$ unreliably even if $Y$ and $Z$ are accurate to double precision.
In our $K_{\boldeta}^{\SS}$ example, the matrix of \cref{eq:inner_cond} when computing $\tril(L_{\boldeta,\gamma}^{-1},-1)$ using high precision $Y_{\GR(\High)}$ and $Z_{\GR(\High)}$ is
\begin{align}\label{eq:two_eg_cond_inner}
    \begin{bmatrix}
        \num{0.00} & \num{0.00} & \num{0.00} & \num{0.00} & \num{0.00} \\
        \num{2.50e+06} & \num{0.00} & \num{0.00} & \num{0.00} & \num{0.00} \\
        \num{1.04e+06} & \num{3.26e+10} & \num{0.00} & \num{0.00} & \num{0.00} \\
        \num{1.04e+06} & \num{1.81e+10} & \num{8.94e+13} & \num{0.00} & \num{0.00} \\
        \num{1.04e+06} & \num{1.81e+10} & \num{5.06e+13} & \num{3.20e+16} & \num{0.00}
    \end{bmatrix}.
\end{align}
If we compute $Y_{\GR(\High)}$ and $Z_{\GR(\High)}$ and round them to double precision to form $\tril(L_{\boldeta,\gamma}^{-1},-1)_{\GR}$, then the relative error is much larger than computing $\bar{\c}_i$, $\bar{S}_i$, and $\bar{\w}_i$ in \cref{eq:inv_rep} with only double precision then forming $\tril(L_{\boldeta,\gamma}^{-1},-1)_{\GvR}$ explicitly in \cref{eq:chol_Linv}. In particular, we have
\begin{equation*}\label{eq:two_eg_trilLinv_GR}
    \begin{aligned}
        \frac{\norm{\tril(L_{\boldeta,\gamma}^{-1},-1)_{\GR}-\tril(L_{\boldeta,\gamma}^{-1},-1)_{\MRef}}_2}{\norm{\tril(L_{\boldeta,\gamma}^{-1},-1)_{\MRef}}_2}&\approx \num{1.945209},  \\
        \frac{\norm{\tril(L_{\boldeta,\gamma}^{-1},-1)_{\GvR}-\tril(L_{\boldeta,\gamma}^{-1},-1)_{\MRef}}_2}{\norm{\tril(L_{\boldeta,\gamma}^{-1},-1)_{\MRef}}_2}&\approx \num{1.050701e-11}.
    \end{aligned}
\end{equation*}

\section{Proofs}\label{sec:proofs}

\subsection{Proof of \Cref{prop:Giv_SS_DC_TC}}\label{sec-proof:Giv_SS_DC_TC}
\begin{proof}[Proof for SS kernels \cref{eq:Giv_SS}]\label{proof:Giv_SS}
  Recall the GR of SS kernels \cref{eq:gen_SS}
    \begin{align*}
        \mu_{i,1}=-\frac{\rho^{3 t_i}}{6}, \ \nu_{i,1}=1, \
        \mu_{i,2}=\frac{\rho^{2 t_i}}{2},\ \nu_{i,2}= \rho^{t_i},\quad i=1,\ldots,N.
    \end{align*}
  We start with $k=1$, i.e., find $c_{i.1}$, $s_{i,1}$, and $\hv_{i,1}$,
  At step $i=N$, since $\nu_{N,1}=1> 0$, the signs of $\hv_{N,1}$ and $\mu_{N,1}$ should be the same, so by \cref{eq:gen2giv_vhat}, $\hv_{N,1}=\nu_{N,1}\mu_{N,1}=-\rho^{3 t_N}/6$.
  At step $i=N-1$, by \cref{eq:gen2giv_G}, rotation components $c_{N-1,1}$ and $s_{N-1,1}$ satisfy
  \begin{align*}
    \begin{bmatrix}
      c_{N-1,1} & -s_{N-1,1} \\ s_{N-1,1} & c_{N-1,1}
    \end{bmatrix}\begin{bmatrix}
      r_{N-1,1} \\ 0
    \end{bmatrix} = 
    \begin{bmatrix}
      \mu_{N-1,1} \\ \mu_{N,1}
    \end{bmatrix}
    =\frac 1 6\begin{bmatrix}
      -\rho^{3 t_{N-1}} \\ -\rho^{3 t_{N}}
    \end{bmatrix},
  \end{align*} 
  where 
  \begin{align*}
    r_{N-1,1} &= \sqrt{\mu_{N,1}^2+\mu_{N-1,1}^2}=\frac{1}{6}\sqrt{\rho^{6 t_N}+\rho^{6 t_{N-1}}}, \\
    c_{N-1,1} &= \frac{\mu_{N-1,1}}{r_{N-1,1}} = \frac{-\rho^{3 t_{N-1}}}{\sqrt{\rho^{6 t_N}+\rho^{6 t_{N-1}}}}, \\
    s_{N-1,1} &= \frac{\mu_{N,1}}{r_{N-1,1}} = \frac{-\rho^{3 t_N}}{\sqrt{\rho^{6 t_N}+\rho^{6 t_{N-1}}}}.
  \end{align*}
  By \cref{eq:gen2giv_vhat}, as $c_{N-1,1}$, $\mu_{N-1,1}<0$ and $\nu_{N-1,1}>0$,
  \begin{align*}
      \hv_{N-1,1}=\nu_{N-1,1} r_{N-1,1}=\frac{1}{6}\sqrt{\rho^{6 t_N}+\rho^{6 t_{N-1}}}.
  \end{align*}
  At step $i=N-2,\ldots,1$, $r_{i,1}=\sqrt{\sum_{j=i}^N \mu_{j,1}^2}=\frac{1}{6}\sqrt{\sum_{j=i}^N \rho^{6 t_j}}$. 
  Thus 
  \begin{align*}
    c_{i,1}&=\frac{\mu_{i,1}}{r_{i,1}}=\frac{-\rho^{3 t_i}}{\sqrt{\sum_{j=i}^N \rho^{6 t_j}}}, \ s_{i,1}=\frac{r_{i+1,1}}{r_{i,1}}=\frac{\sqrt{\sum_{j=i+1}^N \rho^{6 t_j}}}{\sqrt{\sum_{j=i}^N \rho^{6 t_j}}}, \\
    \hv_{i,1}&=\nu_{i,1}r_{i,1}=\frac{1}{6}\sqrt{\sum_{j=i}^N \rho^{6 t_j}},
  \end{align*}
  by $c_{i,1},\mu_{i,1}<0$ and $\nu_{i,1}>0$.
  Note that the above equations are compatible with the case $c_{N-1,1}$ and $\hv_{N-1,1}$, but not $s_{N-1,1}$ and $\hv_{N,1}$ due to the sign. 
  So we write
    \begin{align*}
      s_{i,1}&=\frac{(-1)^{\1(i=N-1)} \sqrt{\sum_{j=i+1}^N \rho^{6 t_j}}}{\sqrt{\sum_{j=i}^N \rho^{6 t_j}}}, \ i=1,\ldots,N-1, \\
      \hv_{i,1}& =\frac{(-1)^{\1(i=N)}}{6} \sqrt{\sum_{j=i}^N \rho^{6 t_j}},\ i=1,\ldots,N.
  \end{align*}
  For $k=2$, we derive $c_{i,2},s_{i,2}$, and $\hv_{i,2}$.
  At step $i=N$, since $\nu_{N,2},\mu_{N,2}>0$, by \cref{eq:gen2giv_vhat}, $\hv_{N,2}=\nu_{N,2}{\mu_{N,2}}=\rho^{t_N} \cdot \rho^{t_N}/2=\rho^{3 t_N}/2$.
  At step $i=N-1,\ldots,1$, we have 
  \begin{align*}
    \begin{bmatrix}
      c_{i,2} & -s_{i,2} \\ s_{i,2} & c_{i,2}
    \end{bmatrix}\begin{bmatrix}
      r_{i,2} \\ 0
    \end{bmatrix} = 
    \begin{bmatrix}
      \mu_{i,2} \\ \mu_{i+1,2}
    \end{bmatrix}
    =\begin{bmatrix}
      \rho^{2 t_{i}}/2 \\ \rho^{2 t_{i+1}}/2
    \end{bmatrix},
  \end{align*}
  where $r_{i,2} = \sqrt{\sum_{j=i}^N \mu_{j,2}^2}=(1/2)\sqrt{\sum_{j=i}^N \rho^{4 t_j}}$, and
  \begin{align*}
    c_{i,2} &= \frac{\mu_{i,2}}{r_{i,2}}=\frac{\rho^{2 t_i}}{\sqrt{\sum_{j=i}^N \rho^{4 t_j}}}, \quad
    s_{i,2} = \frac{r_{i+1,2}}{r_{i,2}}=\frac{\sqrt{\sum_{j=i+1}^N \rho^{4 t_j}}}{\sqrt{\sum_{j=i}^N \rho^{4 t_j}}}.
  \end{align*}
  Since $c_{i,2}$, $\nu_{i,2}$, and $\mu_{i,2}>0$, by \cref{eq:gen2giv_vhat}, we have 
  \begin{align*}
      \hv_{i,2}&= \nu_{i,2} r_{i,2}=\frac{\rho^{ t_i}}{2}\sqrt{\sum_{j=i}^N \rho^{4 t_j}},
  \end{align*}which is compatible with the case $i=N$.
  Thus \cref{eq:Giv_SS} is proved for both $k=1,2$.
\end{proof}

\begin{proof}[Proof for DC kernels \cref{eq:Giv_DC}]\label{proof:Giv_DC}
Recall the GR \cref{eq:gen_DC} of DC kernels $\mu_i=(\lambda\rho)^{t_i}>0$ and $\nu_i=(\lambda/\rho)^{t_i}>0$.
    At step $i=N$, as $\nu_N,\mu_N>0$, by \cref{eq:gen2giv_vhat}, $\hv_N=\nu_N \mu_N=({\lambda}/{\rho})^{t_N}(\lambda\rho)^{t_N}=\lambda^{2t_N}$.
    At step $i=N-1,\ldots,1$,
    $r_i = \sqrt{\sum_{j=i}^N \mu_{j}^2} = \sqrt{\sum_{j=i}^N (\lambda\rho)^{2t_{j}}}$.
    By the Givens rotation \cref{eq:gen2giv_G},
    \begin{align*}
      \begin{bmatrix}
        c_{i} & -s_{i} \\ s_{i} & c_{i}
      \end{bmatrix}\begin{bmatrix}
        r_{i} \\ 0
      \end{bmatrix} = 
      \begin{bmatrix}
        \mu_{i} \\ r_{i+1}
      \end{bmatrix}
      =\begin{bmatrix}
        (\lambda\rho)^{t_{i}} \\ 
        \cbk{\sum_{j=i+1}^N (\lambda\rho)^{2t_{j}}}^{1/2}
      \end{bmatrix},
    \end{align*} 
    we can compute $c_i$ and $s_i$ by
    \begin{align*}
        c_i&=\frac{\mu_i}{r_i}=\frac{(\lambda\rho)^{t_i}}{\sqrt{\sum_{j=i}^N (\lambda\rho)^{2t_j}}}, \quad
        s_i=\frac{r_{i+1}}{r_i}=\frac{\sqrt{\sum_{j=i+1}^N (\lambda\rho)^{2t_j}}}{\sqrt{\sum_{j=i}^N (\lambda\rho)^{2t_j}}}.
    \end{align*}
    Since $c_i$, $\mu_i$, $\nu_i > 0$, we have $\hv_i>0$, thus by \cref{eq:gen2giv_vhat},
    \begin{align*}
        \hv_i = \nu_i r_i = \sbk{\frac{\lambda}{\rho}}^{t_i}\sqrt{\sum_{j=i}^N (\lambda\rho)^{2t_j}}, \quad i=N-1,\ldots,1,
    \end{align*}which is compatible with the case $i=N$. 
    This completes the proof of \cref{eq:Giv_DC}.
  
\end{proof}

\begin{remark}
In practice, suppose $t_i=T i$ for $i=1,\ldots,N$ and sampling time $T>0$, then the equi-spaced version of GvR \cref{eq:Giv_DC} for $K_{\boldeta}^{\DC}$ is
\begin{equation}\label{eq:Giv_DC_equspace}
\begin{aligned}
        c_i &= \sqrt{\frac{1-(\lambda\rho)^{2T}}{1-(\lambda\rho)^{2T(N-i+1)}}}, \quad
        s_i = (\lambda\rho)^{T} \sqrt{\frac{1-(\lambda\rho)^{2T(N-i)}}{1-(\lambda\rho)^{2T(N-i+1)}}}, \\
        \hv_{\ell} &= \lambda^{2T\ell}\sqrt{\frac{1-(\lambda\rho)^{2T(N-\ell+1)}}{1-(\lambda\rho)^{2T}}},
\end{aligned}
\end{equation}
for $i=1,\ldots,N-1$ and $\ell=1,\ldots,N$.
The equi-spaced version of GvR \cref{eq:Giv_SS} for $K_{\boldeta}^{\SS}$ is
\begin{equation}\label{eq:Giv_SS_equspace}
        \begin{aligned}
         & \c_{i}^T=\begin{bmatrix}
            -\sqrt{\frac{1-\rho^{6T}}{1-\rho^{6T(N-i+1)}}} &
            \sqrt{\frac{1-\rho^{4T}}{1-\rho^{4T(N-i+1)}}}
        \end{bmatrix},\\
        & \s_{i}^T=\begin{bmatrix}
            (-1)^{\1(i=N-1)} \rho^{3T} \sqrt{\frac{1-\rho^{6T(N-i)}}{1-\rho^{6T(N-i+1)}}} & 
            \rho^{2T} \sqrt{\frac{1-\rho^{4T(N-i)}}{1-\rho^{4T(N-i+1)}}}
        \end{bmatrix}, \\
        & \hbv_{\ell}^T=\begin{bmatrix}
            \frac{(-1)^{\1(i=N)} \rho^{3T\ell}}{6}\sqrt{\frac{1-\rho^{6T(N-\ell+1)}}{1-\rho^{6T}}} &
            \frac{\rho^{3T\ell}}{2} \sqrt{\frac{1-\rho^{4T(N-\ell+1)}}{1-\rho^{4T}}}
        \end{bmatrix},
      \end{aligned}
\end{equation}
for $i=1,\ldots,N-1$ and $\ell=1,\ldots,N$.    
\end{remark}

\subsection{Proof of \Cref{prop:Giv_output}}\label{sec-proof:Giv_output}
    
Recall the exponential input signal $u(t)=e^{-\alpha t}$ and $u(t-b)=e^{-\alpha t}e^{\alpha b}$ for $\alpha\in\R$ in \cref{eq:input-exp} and DC kernel \cref{eq:DC}. 
    In this case, \cref{eq:input_condition_GR_output_kernel} holds with $\pi_1(t)=e^{-\alpha t}$ and $\rho_1(b)=e^{\alpha b}$.
    Since $K_{\boldeta}\in\calG_{N,1}$, by \cite[Theorem 3]{CA21}, the output kernel matrix $\Psi_{\boldeta}\in\calG_{N,2}\subset\calS_{N,2}$.
    We first  derive the GR of $\Psi_{\boldeta}$ in \Cref{sec-proof:Giv_output_GR} and then use \cref{eq:gen2giv_G,eq:gen2giv_vhat} to obtain its GvR in \Cref{sec-proof:Giv_output_GvR}.

\subsubsection{GR of $\Psi_{\boldeta}$}\label{sec-proof:Giv_output_GR}
We assume $T_{\lambda,\rho,\alpha}=\log(\lambda\rho)+\alpha\neq 0$ and $D_{\lambda,\rho,\alpha}=\log(\lambda/\rho)+\alpha\neq 0$.
Then $\Psi_{\boldeta}$ has GR
  \begin{align*}
    \boldmu_i = 
    \begin{bmatrix}
      \bar{\mu}_1(t_i) & \bar{\mu}_2(t_i)
    \end{bmatrix}^T, \quad 
    \boldnu_j = 
    \begin{bmatrix}
      \bar{\nu}_1(t_j) & \bar{\nu}_2(t_j)
    \end{bmatrix}^T,
  \end{align*}
  where $\bar{\mu}_2(t) = e^{-\alpha t}$, and
  \begin{equation}\label{eq:output_kernel_input_exp_GR}
    \begin{aligned}
      \bar{\mu}_1(t) &= \begin{cases}
      \frac{(\lambda\rho)^t - e^{-\alpha t}}{T_{\lambda,\rho,\alpha}} & \ \text{ (CT)}, \\
      \frac{e^{-\alpha t}-(\lambda\rho)^te^{T_{\lambda,\rho,\alpha}}}{T_{\lambda,\rho,\alpha}'} & \ \text{ (DT)},
    \end{cases} \quad
    \bar{\nu}_1(s) =\begin{cases}
      \frac{(\lambda/\rho)^s - e^{-\alpha s}}{D_{\lambda,\rho,\alpha}}& \ \text{ (CT)}, \\
      \frac{e^{-\alpha s}-(\lambda/\rho)^s e^{D_{\lambda,\rho,\alpha}}}{D_{\lambda,\rho,\alpha}'}& \ \text{ (DT)},
    \end{cases} \\
    \bar{\nu}_2(s) &= \begin{cases}
      \frac{(\lambda/\rho)^s -(\lambda\rho)^s+C_{\lambda,\rho,\alpha}(\lambda^{2s} e^{\alpha s} - e^{-\alpha s})}{D_{\lambda,\rho,\alpha}T_{\lambda,\rho,\alpha}} & \ \text{ (CT)}, \\
      \frac{e^{D_{\lambda,\rho,\alpha}}(\lambda/\rho)^s - e^{T_{\lambda,\rho,\alpha}}(\lambda\rho)^s + C_{\lambda,\rho,\alpha}'(e^{D_{\lambda,\rho,\alpha}+T_{\lambda,\rho,\alpha}}\lambda^{2s}e^{\alpha s}-e^{-\alpha s})}
      {D_{\lambda,\rho,\alpha}'T_{\lambda,\rho,\alpha}'} & \ \text{ (DT)},
    \end{cases}
    \end{aligned}
  \end{equation}
  with 
  $C_{\lambda,\rho,\alpha}={\log\rho}/(\log \lambda + \alpha)$, 
  $T_{\lambda,\rho,\alpha}'=1-e^{T_{\lambda,\rho,\alpha}}$, 
  $D_{\lambda,\rho,\alpha}'=1-e^{D_{\lambda,\rho,\alpha}}$,
  and 
  $C_{\lambda,\rho,\alpha}'=(e^{D_{\lambda,\rho,\alpha}}-e^{T_{\lambda,\rho,\alpha}})/(1-e^{D_{\lambda,\rho,\alpha}+T_{\lambda,\rho,\alpha}})$.

\begin{proof}
We drop the $\boldeta$ inside $\calK(t,s;\boldeta)$ and $\Psi(t,s;\boldeta)$ for simplicity.
  The DC kernel 
  \begin{align*}
    \calK^{\DC}(t,s)= \begin{cases}
      (\lambda\rho)^t (\lambda/\rho)^{s}, \text{ if } t\geq s, \\
      (\lambda/\rho)^{t} (\lambda\rho)^s, \text{ if } t<s,
    \end{cases}
  \end{align*}
  with $p'=1$, $\mu_1(t)=(\lambda\rho)^t$, and $\nu_1(s)=(\lambda/\rho)^s$.
  Then by \cite[Theorem 3]{CA21}, its output kernel $\Psi_{\boldeta}\in\calG_{N,2}$ with 
  \begin{align*}
    & \Psi(t,s)=\begin{cases}
      \bar{\mu}_1(t)\bar{\nu}_1(s)+\bar{\mu}_2(t)\bar{\nu}_2(s), & \text{ if } t\geq s, \\
      \bar{\nu}_1(t)\bar{\mu}_1(s)+\bar{\nu}_2(t)\bar{\mu}_2(s), & \text{ if } t < s,
    \end{cases} \\
    & \bar{\mu}_1(t) = \pi_1(t) f_{11}^{(1)}(t), \quad \bar{\mu}_2(t) = \pi_1(t), \\
    & \bar{\nu}_1(s) = \pi_1(s) f_{11}^{(2)}(s), \quad \bar{\nu}_2(s) = \bar{\ell}_1(s) + \bar{\rho}_1(s).
  \end{align*}
  We compute $f_{11}^{(1)}(t)$, $f_{11}^{(2)}(s)$, $\bar{\ell}_1(s)$, and $\bar{\rho}_1(s)$ by \cite[Equations~(22c)--(22f)]{CA21}.
  For the CT case,
  \begin{align*}
    f_{11}^{(1)} (t) &= \int_{0}^t \mu_1(b) \rho_1(b) \rmd b
    = \int_{0}^{t} (\lambda\rho)^b e^{\alpha b} \rmd b 
    = \frac{(\lambda\rho)^t e^{\alpha t} - 1}{T_{\lambda,\rho,\alpha}}, \\
    f_{11}^{(2)} (s) &= \int_{0}^s \nu_1(a) \rho_1(a) \rmd a = \int_{0}^{s} \sbk{\frac{\lambda}{\rho}}^a e^{\alpha a} \rmd a = \frac{(\lambda/\rho)^s e^{\alpha s} - 1}{D_{\lambda,\rho,\alpha}}, \\
    \bar{\ell}_1(s) &= -\bar{\nu}_1(s) f_{11}^{(1)}(s) 
    % = -\pi_1(s) f_{11}^{(2)}(s)f_{11}^{(1)}(s) 
    = -e^{-\alpha s} \mbk{ \frac{(\lambda/\rho)^s e^{\alpha s} - 1}{D_{\lambda,\rho,\alpha}} } \mbk{ \frac{(\lambda\rho)^s e^{\alpha s} - 1}{T_{\lambda,\rho,\alpha}}},
  \end{align*}where we define $T_{\lambda,\rho,\alpha}=\log(\lambda\rho)+\alpha\neq 0$ and $D_{\lambda,\rho,\alpha}=\log(\lambda/\rho)+\alpha \neq 0$, and by $1/D_{\lambda,\rho,\alpha}-1/T_{\lambda,\rho,\alpha}=2\log \rho/(D_{\lambda,\rho,\alpha}T_{\lambda,\rho,\alpha})$,
  \begin{align*}
    \bar{\rho}_1(s) &= \int_{0}^{s} \sbk{ \int_{0}^{s} \calK^{\DC}(b,a)u(s-a)\rmd a} \rho_1(b) \rmd b \\
    &=e^{-\alpha s} \int_{0}^{s} \sbk{ \int_{0}^{s} \calK^{\DC}(b,a)e^{\alpha a}\rmd a} e^{\alpha b} \rmd b \\
    &=e^{-\alpha s} \int_{0}^{s} \mbk{
      (\lambda\rho)^b \int_{0}^{b} \sbk{\frac{\lambda}{\rho}}^a e^{\alpha a}\rmd a + \sbk{\frac{\lambda}{\rho}}^b \int_{b}^{s} (\lambda\rho)^a e^{\alpha a}\rmd a
    }e^{\alpha b} \rmd b \\
    &= e^{-\alpha s} \int_{0}^{s} \mbk{\frac{\lambda^{2b} e^{\alpha b} - (\lambda\rho)^b}{D_{\lambda,\rho,\alpha}} + \frac{\sbk{{\lambda}/{\rho}}^b (\lambda\rho)^s e^{\alpha s} - \lambda^{2b} e^{\alpha b}}{T_{\lambda,\rho,\alpha}}} e^{\alpha b}\rmd b \\
    &=\frac{e^{-\alpha s}}{D_{\lambda,\rho,\alpha}}\mbk{\frac{\lambda^{2s} e^{2\alpha s} - 1}{2\log\lambda + 2\alpha} - \frac{\sbk{\lambda\rho}^s e^{\alpha s} - 1}{T_{\lambda,\rho,\alpha}}} \\
    &\qquad + \frac{\sbk{\lambda\rho}^s }{T_{\lambda,\rho,\alpha}} \cdot \frac{\sbk{\lambda/\rho}^s e^{\alpha s} - 1}{D_{\lambda,\rho,\alpha}} - \frac{e^{-\alpha s}}{T_{\lambda,\rho,\alpha}} \cdot \frac{\lambda^{2s} e^{2\alpha s} - 1}{2\log{\lambda} + 2\alpha} \\
    &= \frac{C_{\lambda,\rho,\alpha}(\lambda^{2s} e^{\alpha s} - e^{-\alpha s})+\lambda^{2s}e^{\alpha s} - 2(\lambda\rho)^s + e^{-\alpha s}}{D_{\lambda,\rho,\alpha}T_{\lambda,\rho,\alpha}},
  \end{align*}$C_{\lambda,\rho,\alpha}=\log\rho/(\log \lambda + \alpha)$.
  Hence, the GR is
  \begin{align*}
    \bar{\mu}_1(t) &
    = \frac{(\lambda\rho)^t - e^{-\alpha t}}{T_{\lambda,\rho,\alpha}}, \quad
    \bar{\nu}_1(s) 
    = \frac{(\lambda/\rho)^s - e^{-\alpha s}}{D_{\lambda,\rho,\alpha}}, \quad
    \bar{\mu}_2(t)=e^{-\alpha t}, \\
    \bar{\nu}_2(s) &= \frac{(\lambda/\rho)^s -(\lambda\rho)^s+C_{\lambda,\rho,\alpha}(\lambda^{2s} e^{\alpha s} - e^{-\alpha s})}{D_{\lambda,\rho,\alpha}T_{\lambda,\rho,\alpha}}.
  \end{align*}

  Next, for the DT case, define $T_{\lambda,\rho,\alpha}'=1-e^{T_{\lambda,\rho,\alpha}}$ and $D_{\lambda,\rho,\alpha}'=1-e^{D_{\lambda,\rho,\alpha}}$, then
      \begin{align*}
        f_{11}^{(1)}(t) &= \sum_{b=0}^t \mu_1(b) \rho_1(b) = \sum_{b=0}^{t} (\lambda\rho)^b e^{\alpha b} = \frac{1-e^{T_{\lambda,\rho,\alpha}(t+1)}}{T_{\lambda,\rho,\alpha}'}, \\
        f_{11}^{(2)}(s) &= \sum_{a=0}^s \nu_1(a) \rho_1(a) = \sum_{a=0}^s \sbk{\frac{\lambda}{\rho}}^{a} e^{\alpha a} = \frac{1-e^{D_{\lambda,\rho,\alpha}(s+1)}}{D_{\lambda,\rho,\alpha}'},\\
        \bar{\ell}_1(s) &
        = -e^{-\alpha s} 
        \mbk{\frac{1-e^{T_{\lambda,\rho,\alpha}(t+1)}}{T_{\lambda,\rho,\alpha}'}}
        \mbk{\frac{1-e^{T_{\lambda,\rho,\alpha}(t+1)}}{T_{\lambda,\rho,\alpha}'}}
        .
      \end{align*}  
      By $(\lambda\rho)^te^{\alpha t}=e^{T_{\lambda,\rho,\alpha}t}$ and $(\lambda/\rho)^s e^{\alpha s}=e^{D_{\lambda,\rho,\alpha}s}$, 
      we have
      \begin{small}
      \begin{align*}
        & \bar{\rho}_1(s) = \sum_{b=0}^{s} \sbk{\sum_{a=0}^{s} \calK^{\DC}(b,a) u(s-a)} \rho_1(b) \\
        &=\sum_{b=0}^{s} \mbk{
          \sum_{a=0}^{b} \calK^{\DC}(b,a) u(s-a) + 
          \sum_{a=b+1}^{s} \calK^{\DC}(b,a) u(s-a)
        } e^{\alpha b} \\
        &= e^{-\alpha s} \sum_{b=0}^{s} \mbk{
          (\lambda\rho)^b\sum_{a=0}^{b} \sbk{\frac{\lambda}{\rho}}^a e^{\alpha a} + 
          \sbk{\frac{\lambda}{\rho}}^b \sum_{a=b+1}^{s} (\lambda\rho)^a e^{\alpha a}
        } e^{\alpha b} \\
        &= \frac{e^{-\alpha s}[1-2e^{T_{\lambda,\rho,\alpha}(s+1)}+e^{(T_{\lambda,\rho,\alpha}+D_{\lambda,\rho,\alpha})(s+1)}]}
        {D_{\lambda,\rho,\alpha}'T_{\lambda,\rho,\alpha}'} + 
        \frac{C_{\lambda,\rho,\alpha}'(e^{T_{\lambda,\rho,\alpha}+D_{\lambda,\rho,\alpha}}\lambda^{2s}e^{\alpha s}-e^{-\alpha s})}
        {D_{\lambda,\rho,\alpha}'T_{\lambda,\rho,\alpha}'},
      \end{align*}
      \end{small}
      where $C_{\lambda,\rho,\alpha}'
      =(e^{D_{\lambda,\rho,\alpha}}-e^{T_{\lambda,\rho,\alpha}})/(1-e^{D_{\lambda,\rho,\alpha}+T_{\lambda,\rho,\alpha}})$.
      Thus
      \begin{align*}
        \bar{\mu}_1(t) &= \frac{e^{-\alpha t}-(\lambda\rho)^te^{T_{\lambda,\rho,\alpha}}}{T_{\lambda,\rho,\alpha}'}, \quad
        \bar{\nu}_1(s) = \frac{e^{-\alpha s}-(\lambda/\rho)^s e^{D_{\lambda,\rho,\alpha}}}{D_{\lambda,\rho,\alpha}'}, \quad
        \bar{\mu}_2(t) = e^{-\alpha t}, \\
        \bar{\nu}_2(s) &= 
        \frac{e^{D_{\lambda,\rho,\alpha}}(\lambda/\rho)^s - e^{T_{\lambda,\rho,\alpha}}(\lambda\rho)^s + C_{\lambda,\rho,\alpha}'(e^{D_{\lambda,\rho,\alpha}+T_{\lambda,\rho,\alpha}}\lambda^{2s}e^{\alpha s}-e^{-\alpha s})}
        {D_{\lambda,\rho,\alpha}'T_{\lambda,\rho,\alpha}'}.
      \end{align*}
We thus show \cref{eq:output_kernel_input_exp_GR}.
\end{proof}

\subsubsection{GvR of $\Psi_{\boldeta}$}\label{sec-proof:Giv_output_GvR}
For simplicity, we drop the subscripts of such that $T=T_{\lambda,\rho,\alpha}$, $D=D_{\lambda,\rho,\alpha}$, $C=C_{\lambda,\rho,\alpha}$, $T'=T_{\lambda,\rho,\alpha}'$, $D'=D_{\lambda,\rho,\alpha}'$, and $C'=C_{\lambda,\rho,\alpha}'$.
\begin{proof}[Proof for the CT case \cref{eq:output_kernel_input_exp_GvR_CT}]
    Recall the GR \cref{eq:output_kernel_input_exp_GR} of $\Psi_{\boldeta}$ that when $k=1$, 
    \begin{align*}
        \mu_{i,1}&=\frac{(\lambda\rho)^{t_i} - e^{-\alpha t_i}}{T}, \quad \nu_{i,1}=\frac{(\lambda/\rho)^{t_i} - e^{-\alpha t_i}}{D}.
    \end{align*}
    Note that $T>0$ if and only if $\lambda\rho>e^{-\alpha}$, and $D>0$ if and only if $\lambda/\rho>e^{-\alpha}$, so we have $\mu_{i,1}\geq 0$ and $\nu_{i,1}\geq 0$ for all $i=1,\ldots,N$.
    When $i=N$, since $\hv_{N,1}$ has the same sign as $\nu_{N,1}\mu_{N,1}\geq 0$, by \cref{eq:gen2giv_vhat}, we have
    \begin{align*}
        \hv_{N,1} = \nu_{N,1}\mu_{N,1}=\frac{[(\lambda/\rho)^{t_N} - e^{-\alpha t_N}][(\lambda\rho)^{t_i} - e^{-\alpha t_i}]}{DT}\geq 0.
    \end{align*}
    When $i=N-1,\ldots,1$, by the Givens rotation \cref{eq:gen2giv_G},
    \begin{align*}
        r_{i,1} &= \sqrt{\sum_{j=i}^N \mu_{j,1}^2} = \frac{1}{\abs{T}}\sqrt{\sum_{j=i}^N [(\lambda\rho)^{t_j}-e^{-\alpha t_j}]^2}, \\
        c_{i,1} &= \frac{\mu_{i,1}}{r_{i,1}} = \frac{\abs{T}}{T}\cdot \frac{(\lambda\rho)^{t_i} - e^{-\alpha t_i}}{\sqrt{\sum_{j=i}^N [(\lambda\rho)^{t_j}-e^{-\alpha t_j}]^2}} = \frac{\abs{(\lambda\rho)^{t_i} - e^{-\alpha t_i}}}{\sqrt{\sum_{j=i}^N [(\lambda\rho)^{t_j}-e^{-\alpha t_j}]^2}}, \\
        s_{i,1} &= \frac{r_{i+1,1}}{r_{i,1}} = \frac{\sqrt{\sum_{j=i+1}^N [(\lambda\rho)^{t_j}-e^{-\alpha t_j}]^2}}{\sqrt{\sum_{j=i}^N [(\lambda\rho)^{t_j}-e^{-\alpha t_j}]^2}},
    \end{align*}
    and by $c_{i,1}\geq 0$, we have $\hv_{i,1}\geq 0$ and by \cref{eq:gen2giv_vhat},
    \begin{align*}
        \hv_{i,1} = \nu_{i,1}r_{i,1} = \abs{\nu_{i,1}}r_{i,1} = \frac{\abs{(\lambda/\rho)^{t_i} - e^{-\alpha t_i}}\sqrt{\sum_{j=i}^N [(\lambda\rho)^{t_j}-e^{-\alpha t_j}]^2}}{\abs{DT}},
    \end{align*}
    which is compatible with the case $i=N$.

    For $k=2$, 
    \begin{align*}
        \mu_{i,2}=e^{-\alpha t_i},\quad \nu_{i,2}=\frac{(\lambda/\rho)^{t_i} -(\lambda\rho)^{t_i}+C(\lambda^{2{t_i}} e^{\alpha {t_i}} - e^{-\alpha {t_i}})}{DT}.
    \end{align*}
    By $\mu_{i,2}>0$, $\hv_{N,2}$ has the same sign as $\nu_{N,2}$, and by \cref{eq:gen2giv_vhat}, we have
    \begin{align*}
        \hv_{N,2}=\nu_{N,2}\mu_{N,2}=\frac{(\lambda/\rho)^{t_N} -(\lambda\rho)^{t_N}+C(\lambda^{2{t_N}} e^{\alpha {t_N}} - e^{-\alpha {t_N}})}{DT} \cdot e^{-\alpha t_N}.
    \end{align*}
    When $i=N-1,\ldots,1$, by the Givens rotation \cref{eq:gen2giv_G},
    \begin{align*}
        r_{i,2} &= \sqrt{\sum_{j=i}^{N}\mu_{j,2}^2} = \sqrt{\sum_{j=i}^N e^{-2\alpha t_j}}, \\
        c_{i,2} &= \frac{\mu_{i,2}}{r_{i,2}} = \frac{e^{-\alpha t_i}}{\sqrt{\sum_{j=i}^N e^{-2\alpha t_j}}},
        \quad s_{i,2}=\frac{r_{i+1,2}}{r_{i,2}} = \frac{\sqrt{\sum_{j=i+1}^N e^{-2\alpha t_j}}}{\sqrt{\sum_{j=i}^N e^{-2\alpha t_j}}}.
    \end{align*}
    Since $c_{i,2}, \mu_{i,2}>0$, $\hv_{i,2}$ has the same sign as $\nu_{i,2}$, and by \cref{eq:gen2giv_vhat}, we have 
    \begin{align*}
        \hv_{i,2} = \nu_{i,2}r_{i,2} = \frac{(\lambda/\rho)^{t_i} -(\lambda\rho)^{t_i}+C(\lambda^{2{t_i}} e^{\alpha {t_i}} - e^{-\alpha {t_i}})}{DT} \cdot \sqrt{\sum_{j=i}^N e^{-2\alpha t_j}},
    \end{align*}which is compatible with the case $i=N$.
    Hence \cref{eq:output_kernel_input_exp_GvR_CT} is proved.
    \end{proof}

    \begin{proof}[Proof for the DT case \cref{eq:output_kernel_input_exp_GvR_DT}]
        When $k=1$, 
        \begin{align*}
            \mu_{i,1} = \frac{e^{-\alpha t_i}-(\lambda\rho)^{t_i}e^{T}}{T'}, \quad 
            \nu_{i,1} = \frac{e^{-\alpha t_i}-(\lambda/\rho)^{t_i} e^{D}}{D'}.
        \end{align*}
        By $e^{T}=(\lambda\rho)e^{\alpha}$ and $e^D=(\lambda/\rho)e^{\alpha}$, we have
        \begin{align*}
            T'>0 \ \Longleftrightarrow \ T<0 \ \Longleftrightarrow \ e^{-\alpha } > \lambda\rho 
            \Longleftrightarrow e^{-\alpha(t+1)} > (\lambda\rho)^{t+1} , \ t\geq 0,
        \end{align*}
        so $\mu_{i,1}\geq 0$.
        Similarly, $D'>0$ if and only if $e^{-\alpha(t+1)}>(\lambda/\rho)^{t+1}$ and so $\nu_{i,1}\geq 0$.
        When $i=N$, since $\hv_{N,1}$ has the same sign as $\mu_{N,1}\nu_{N,1}\geq 0$, by \cref{eq:gen2giv_vhat}, we have
        \begin{align*}
            \hv_{N,1} = \nu_{N,1} \mu_{N,1} = \frac{e^{-\alpha t_N}-(\lambda/\rho)^{t_N} e^{D}}{D'} \cdot \frac{e^{-\alpha t_N}-(\lambda\rho)^{t_N}e^{T}}{T'}.
        \end{align*}
        When $i=N-1,\ldots,1$, by the Givens rotation \cref{eq:gen2giv_G},
        \begin{align*}
            r_{i,1} &= \sqrt{\sum_{j=i}^N \mu_{j,1}^2} = \frac{1}{\abs{T'}} \sqrt{\sum_{j=i}^N [e^{-\alpha t_j}-(\lambda\rho)^{t_j}e^{T}]^2}, \\
            c_{i,1} &= \frac{\mu_{i,1}}{r_{i,1}} = \frac{\abs{T'}}{T'}\cdot \frac{e^{-\alpha t_i}-(\lambda\rho)^{t_i}e^{T}}{\sqrt{\sum_{j=i}^N [e^{-\alpha t_j}-(\lambda\rho)^{t_j}e^{T}]^2}} = \frac{\abs{e^{-\alpha t_i}-(\lambda\rho)^{t_i}e^{T}}}{\sqrt{\sum_{j=i}^N [e^{-\alpha t_j}-(\lambda\rho)^{t_j}e^{T}]^2}}, \\
            s_{i,1} &= \frac{r_{i+1,1}}{r_{i,1}} = \frac{\sqrt{\sum_{j=i+1}^N [e^{-\alpha t_j}-(\lambda\rho)^{t_j}e^{T}]^2}}{\sqrt{\sum_{j=i}^N [e^{-\alpha t_j}-(\lambda\rho)^{t_j}e^{T}]^2}}.
        \end{align*}
        And by $c_{i,1}\geq 0$, we have $\hv_{i,1}\geq 0$ and by \cref{eq:gen2giv_vhat},
        \begin{align*}
            \hv_{i,1} = \nu_{i,1} r_{i,1} = \abs{\nu_{i,1}} r_{i,1} = \frac{\abs{e^{-\alpha t_i}-(\lambda/\rho)^{t_i} e^{D}}\sqrt{\sum_{j=i}^N [e^{-\alpha t_j}-(\lambda\rho)^{t_j}e^{T}]^2}}{\abs{D'T'}},
        \end{align*}
        which is compatible with the case $i=N$.

        When $k=2$, $\mu_{i,2}=e^{-\alpha t_i}$ and
        \begin{align*}
            \nu_{i,2}=\frac{e^{D}(\lambda/\rho)^{t_i} - e^{T}(\lambda\rho)^{t_i} + C'(e^{D+T}\lambda^{2{t_i}}e^{\alpha {t_i}}-e^{-\alpha {t_i}})}{D'T'}.
        \end{align*}
        By $\mu_{i,2}>0$, $\hv_{N,2}$ has the same sign as $\nu_{N,2}$, and by \cref{eq:gen2giv_vhat}, we have
        \begin{align*}
            \hv_{N,2} = \nu_{N,2}\mu_{N,2} = \frac{e^{D}(\lambda/\rho)^{t_N} - e^{T}(\lambda\rho)^{t_N} + C'(e^{D+T}\lambda^{2{t_N}}e^{\alpha {t_N}}-e^{-\alpha {t_N}})}{D'T'}e^{-\alpha t_N}.
        \end{align*}
        When $i=N-1,\ldots,1$, by the Givens rotation \cref{eq:gen2giv_G},
        \begin{align*}
            r_{i,2} &= \sqrt{\sum_{j=i}^N \mu_{j,2}^2} = \sqrt{\sum_{j=i}^N e^{-2\alpha t_j}}, \\
            c_{i,2} &= \frac{\mu_{i,2}}{r_{i,2}} = \frac{e^{-\alpha t_i}}{\sqrt{\sum_{j=i}^N e^{-2\alpha t_j}}}, \quad 
            s_{i,2} = \frac{r_{i+1,2}}{r_{i,2}} = \frac{\sqrt{\sum_{j=i+1}^N e^{-2\alpha t_j}}}{\sqrt{\sum_{j=i}^N e^{-2\alpha t_j}}}.
        \end{align*}
        Since $c_{i,2},\mu_{i,2}>0$, $\hv_{i,2}$ has the same sign as $\nu_{i,2}$, and by \cref{eq:gen2giv_vhat}, we have
        \begin{align*}
            \hv_{i,2} = \nu_{i,2}r_{i,2} = \frac{e^{D}(\lambda/\rho)^{t_i} - e^{T}(\lambda\rho)^{t_i} + C'(e^{D+T}\lambda^{2{t_i}}e^{\alpha {t_i}}-e^{-\alpha {t_i}})}{D'T'} \cdot
            \sqrt{\sum_{j=i}^N e^{-2\alpha t_j}},
        \end{align*}
        which is compatible with the case $i=N$.
        Hence \cref{eq:output_kernel_input_exp_GvR_DT} is proved.
\end{proof}

\section{Additional algorithms}
\Cref{alg:Lx,alg:Ltx,alg:Lx=y,alg:Ltx=y} give the GvR-based algorithms for computing $L\x$, $L^T\x$, $L\x=\y$, and $L^T\x=\y$ for $L$ in \cref{eq:chol_L}, respectively.

\begin{algorithm}
  \caption{Triangular product $L\x$.}
  \label{alg:Lx}
  \begin{algorithmic}
  \STATE{\textbf{Input:} GvR $\c_i,\s_i,\w_i$, and $f_i$ of $L$ in \cref{eq:chol_L}, and $\y\in\R^N$.}  
  \STATE{\textbf{Output:} $\y\in\R^N$ such that $L\x=\y$} 
  \STATE{Initialize $\boldchi^\rmL\leftarrow\0_p$} 
  \FOR{$i=1\ldots,N$}
    \STATE{$y_i\leftarrow \c_i^T\boldchi^\rmL+ f_i x_i$ }
    \STATE{$\boldchi^\rmL\leftarrow \s_{i}\circ(\boldchi^\rmL+\w_{i}x_{i})$ if $i<N$}
  \ENDFOR
  \end{algorithmic}
\end{algorithm}

\begin{algorithm}
  \caption{Adjoint triangular product $L^T\x$.}
  \label{alg:Ltx}
  \begin{algorithmic}
  \STATE{\textbf{Input:} \textbf{Input:} GvR $\c_i,\s_i,\w_i$, and $f_i$ of $L$ in \cref{eq:chol_L}, and $\y\in\R^N$.}  
  \STATE{\textbf{Output:} $\y\in\R^N$ such that $L^T\x=\y$} 
  \STATE{Initialize $\boldchi^\rmR\leftarrow\0_p$} 
  \FOR{$i=N,\ldots,1$}
    \STATE{$y_i\leftarrow \w_i^T\boldchi^\rmR + f_i x_i$ }
    \STATE{$\boldchi^\rmR\leftarrow \s_{i-1}\circ (\boldchi^\rmR+\c_{i}x_{i})$ if $i>1$}
  \ENDFOR
  \end{algorithmic}
\end{algorithm}

\begin{algorithm}
  \caption{Forward substitution $L\x=\y$.}
  \label{alg:Lx=y}
  \begin{algorithmic}
  \STATE{\textbf{Input:} GvR $\c_i,\s_i,\w_i$, and $f_i$ of $L$ in \cref{eq:chol_L}, and $\y\in\R^N$.}  
  \STATE{\textbf{Output:} $\x\in\R^N$ such that $L\x=\y$.} 
  \STATE{Initialize $\boldchi\leftarrow\0_p$} 
  \FOR{$i=1,\ldots,N$}
    \STATE{$x_i\leftarrow (-c_i^T \boldchi + y_i)/f_i$}
    \STATE{$\boldchi\leftarrow \s_{i}\circ(\boldchi+\w_{i}x_{i})$ if $i<N$}
  \ENDFOR
  \end{algorithmic}
\end{algorithm}

\begin{algorithm}
  \caption{Backward substitution $L^T\x=\y$.}
  \label{alg:Ltx=y}
  \begin{algorithmic}
  \STATE{\textbf{Input:} GvR $\c_i,\s_i,\w_i$, and $f_i$ of $L$ in \cref{eq:chol_L}, and $\y\in\R^N$.}  
  \STATE{\textbf{Output:} $\x\in\R^N$ such that $L^T\x=\y$} 
  \STATE{Initialize $\boldchi\leftarrow\0_p$} 
  \FOR{$i=N,\ldots,1$}
    \STATE{$x_i=(-\w_i^T\boldchi + y_i)/f_i$ }
    \STATE{$\boldchi\leftarrow \s_{i-1}\circ (\boldchi+\c_{i}x_{i})$ if $i>1$}
  \ENDFOR
  \end{algorithmic}
\end{algorithm}

\section{Additional simulation results}\label{sec:sim_additional}
We provide additional stability tests in \Cref{sec:sim_stability_accuracy} by varying $\alpha=0.5,1.0,1.5$ in (S2) to investigate the impact of decay rate on the algorithms.
The results are shown in \Cref{fig:stab_changeAlpha}.

\begin{figure}
  \includegraphics[width=1\textwidth]{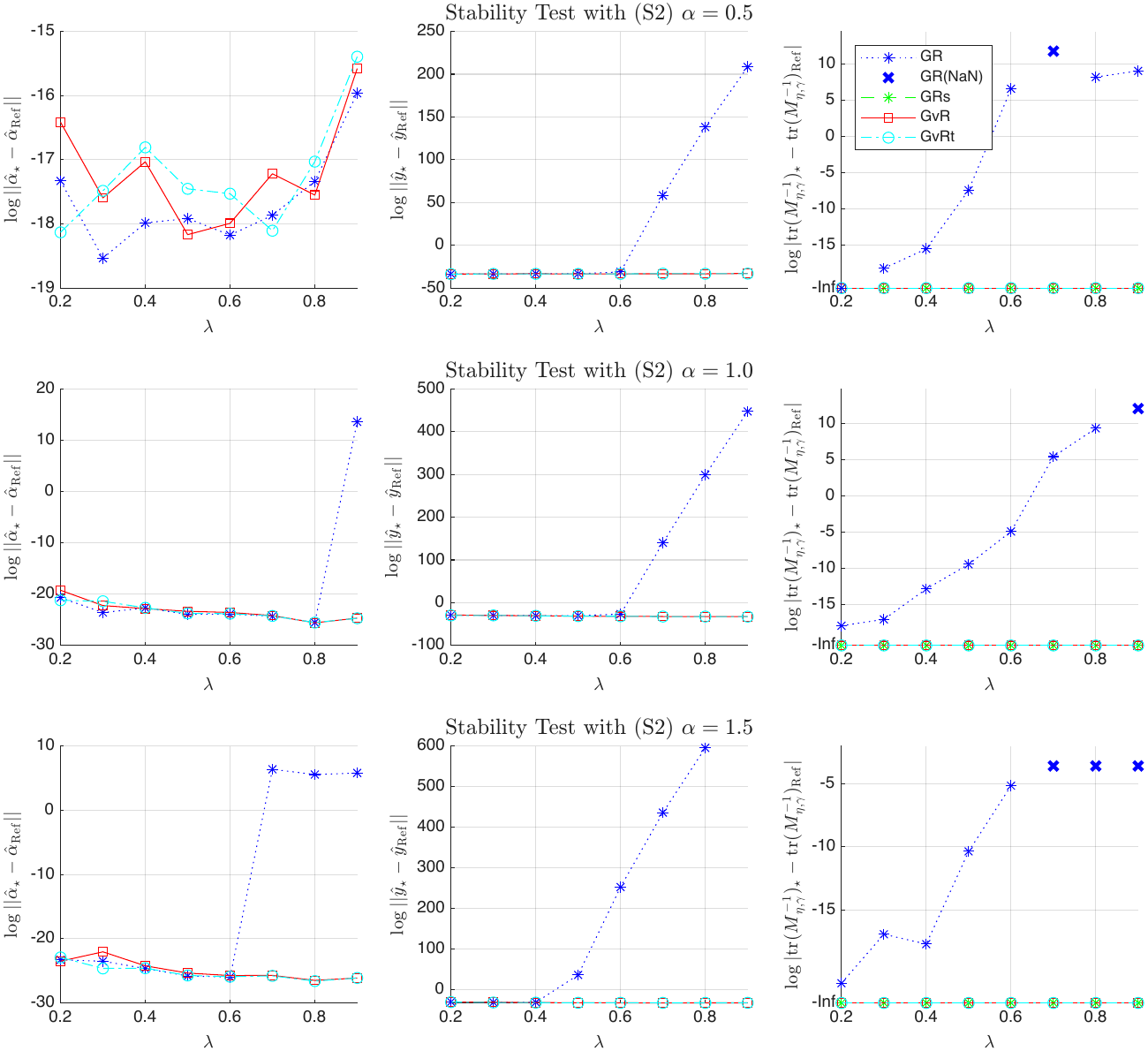}
  \caption{The logarithms of the averaged difference norms with respect to $\lambda$ using methods $\star\in\{\GR,\GRs,\GvR,\GvRt\}$ while fixing $(c,\rho,\gamma)=(1,0.6,10^{-4})$ and varying $\alpha=0.5,1.0,1.5$. 
  In the first two columns, $\GR$ and $\GRs$ are the same.  
  The experiments are repeated 80 times.}
  \label{fig:stab_changeAlpha}
\end{figure}

\bibliographystyle{siamplain}
\bibliography{references}

\begin{thebibliography}{10}

\bibitem{andersen2020smoothing}
{\sc M.~S. Andersen and T.~Chen}, {\em Smoothing splines and rank structured matrices: Revisiting the spline kernel}, SIAM J. Matrix Anal. Appl., 41 (2020), pp.~389--412.

\bibitem{aronszajn1950theory}
{\sc N.~Aronszajn}, {\em Theory of reproducing kernels}, Transactions of the American mathematical society, 68 (1950), pp.~337--404.

\bibitem{BP2020}
{\sc M.~Bisiacco and G.~Pillonetto}, {\em On the mathematical foundations of stable {RKHS}s}, Automatica, 118 (2020), p.~109038.

\bibitem{chandrasekaran2000fast}
{\sc S.~Chandrasekaran and M.~Gu}, {\em Fast and stable eigendecomposition of symmetric banded plus semi-separable matrices}, Linear Algebra Appl., 313 (2000), pp.~107--114.

\bibitem{CCDA23}
{\sc L.~Chen, T.~Chen, U.~Detha, and M.~S. Andersen}, {\em Towards scalable kernel-based regularized system identification}, in 2023 62nd IEEE Conference on Decision and Control (CDC), IEEE, 2023, pp.~1498--1504.

\bibitem{chen2018kernel}
{\sc T.~Chen}, {\em On kernel design for regularized lti system identification}, Automatica, 90 (2018), pp.~109--122, \url{https://doi.org/https://doi.org/10.1016/j.automatica.2017.12.039}.

\bibitem{CA21}
{\sc T.~Chen and M.~S. Andersen}, {\em On semiseparable kernels and efficient implementation for regularized system identification and function estimation}, Automatica, 132 (2021), p.~109682, \url{https://doi.org/https://doi.org/10.1016/j.automatica.2021.109682}.

\bibitem{CL13}
{\sc T.~Chen and L.~Ljung}, {\em Implementation of algorithms for tuning parameters in regularized least squares problems in system identification}, Automatica, 49 (2013), pp.~2213 -- 2220.

\bibitem{COL2012}
{\sc T.~Chen, H.~Ohlsson, and L.~Ljung}, {\em On the estimation of transfer functions, regularizations and {G}aussian processes-revisited}, Automatica, 48 (2012), pp.~1525--1535.

\bibitem{CP18}
{\sc T.~Chen and G.~Pillonetto}, {\em On the stability of reproducing kernel hilbert spaces of discrete-time impulse responses}, Automatica,  (2018).

\bibitem{eidelman2013separable}
{\sc Y.~Eidelman, I.~Gohberg, and I.~Haimovici}, {\em Separable Type Representations of Matrices and Fast Algorithms: Volume 1 Basics. Completion Problems. Multiplication and Inversion Algorithms}, Oper. Theory Adv. Appl., Springer Basel, 2013.

\bibitem{FC2024}
{\sc X.~Fang and T.~Chen}, {\em On kernel design for regularized non-causal system identification}, Automatica, 159 (2024), p.~111335.

\bibitem{foreman2017fast}
{\sc D.~Foreman-Mackey, E.~Agol, S.~Ambikasaran, and R.~Angus}, {\em Fast and scalable gaussian process modeling with applications to astronomical time series}, The Astronomical Journal, 154 (2017), p.~220, \url{https://doi.org/10.3847/1538-3881/aa9332}.

\bibitem{gantmacher2002oscillation}
{\sc F.~R. Gantmacher and M.~G. Krein}, {\em Oscillation matrices and kernels and small vibrations of mechanical systems: revised edition}, American Mathematical Society Rhode Island, 2002.

\bibitem{Golub1979}
{\sc M.~H. Gene H.~Golub and G.~Wahba}, {\em Generalized cross-validation as a method for choosing a good ridge parameter}, Technometrics, 21 (1979), pp.~215--223.

\bibitem{GoVa13}
{\sc G.~H. Golub and C.~F. Van~Loan}, {\em Matrix Computations}, The Johns Hopkins University Press, Baltimore, 4th~ed., 2013.

\bibitem{greenberg1959matrix}
{\sc B.~G. Greenberg and A.~E. Sarhan}, {\em Matrix inversion, its interest and application in analysis of data}, J. Amer. Statist. Assoc., 54 (1959), pp.~755--766.

\bibitem{greengard1991numerical}
{\sc L.~Greengard and V.~Rokhlin}, {\em On the numerical solution of two-point boundary value problems}, Comm. Pure Appl. Math., 44 (1991), pp.~419--452.

\bibitem{halmos2017introduction}
{\sc P.~R. Halmos}, {\em Introduction to Hilbert space and the theory of spectral multiplicity}, Courier Dover Publications, 2017.

\bibitem{higham2002accuracy}
{\sc N.~J. Higham}, {\em Accuracy and Stability of Numerical Algorithms}, Soc. Ind. Appl. Math., second~ed., 2002, \url{https://doi.org/10.1137/1.9780898718027}.

\bibitem{keiner2009fast}
{\sc J.~Keiner and B.~J. Waterhouse}, {\em Fast principal components analysis method for finance problems with unequal time steps}, in Monte Carlo and Quasi-Monte Carlo Methods 2008, Springer, 2009, pp.~455--465.

\bibitem{LCM20}
{\sc L.~Ljung, T.~Chen, and B.~Mu}, {\em A shift in paradigm for system identification}, Internat. J. Control, 93 (2020), pp.~173--180.

\bibitem{Massei2020toolbox}
{\sc S.~Massei, L.~Robol, and D.~Kressner}, {\em hm-toolbox: Matlab software for {HODLR} and {HSS} matrices}, SIAM J. Sci. Comput., 42 (2020), pp.~C43--C68.

\bibitem{mu2018asym}
{\sc B.~Mu, T.~Chen, and L.~Ljung}, {\em On asymptotic properties of hyperparameter estimators for kernel-based regularization methods}, Automatica, 94 (2018), pp.~381--395, \url{https://doi.org/https://doi.org/10.1016/j.automatica.2018.04.035}.

\bibitem{PCCDL22}
{\sc G.~Pillonetto, T.~Chen, A.~Chiuso, G.~De~Nicolao, and L.~Ljung}, {\em Regularized system identification: Learning dynamic models from data}, Springer Nature, 2022.

\bibitem{pillonetto2010new}
{\sc G.~Pillonetto and G.~{De Nicolao}}, {\em A new kernel-based approach for linear system identification}, Automatica, 46 (2010), pp.~81--93, \url{https://doi.org/https://doi.org/10.1016/j.automatica.2009.10.031}.

\bibitem{pillonetto2014kernel}
{\sc G.~Pillonetto, F.~Dinuzzo, T.~Chen, G.~De~Nicolao, and L.~Ljung}, {\em Kernel methods in system identification, machine learning and function estimation: A survey}, Automatica, 50 (2014), pp.~657--682.

\bibitem{PL2023}
{\sc G.~Pillonetto and L.~Ljung}, {\em Full bayesian identification of linear dynamic systems using stable kernels}, Proceedings of the National Academy of Sciences, 120 (2023), p.~e2218197120.

\bibitem{SXAC23}
{\sc Z.~Shen, Y.~Xu, M.~S. Andersen, and T.~Chen}, {\em An efficient implementation for kernel-based regularized system identification with periodic input signals}, in 2023 62nd IEEE Conference on Decision and Control (CDC), IEEE, 2023, pp.~1480--1485.

\bibitem{van2005orthogonal}
{\sc M.~Van~Barel, R.~Vandebril, and N.~Mastronardi}, {\em An orthogonal similarity reduction of a matrix into semiseparable form}, SIAM J. Matrix Anal. Appl., 27 (2005), pp.~176--197.

\bibitem{van2004two}
{\sc E.~Van~Camp, N.~Mastronardi, and M.~Van~Barel}, {\em Two fast algorithms for solving diagonal-plus-semiseparable linear systems}, J. Comput. Appl. Math., 164 (2004), pp.~731--747.

\bibitem{Vandebril2008a}
{\sc R.~Vandebril, M.~V. Barel, and N.~Mastronardi}, {\em Matrix Computations and Semiseparable Matrices: Linear Systems}, Johns Hopkins University Press, 2008.

\bibitem{vandebril2005note}
{\sc R.~Vandebril, M.~Van~Barel, and N.~Mastronardi}, {\em A note on the representation and definition of semiseparable matrices}, Numer. Linear Algebra Appl., 12 (2005), pp.~839--858.

\bibitem{vandebril2007matrix}
{\sc R.~Vandebril, M.~Van~Barel, and N.~Mastronardi}, {\em Matrix computations and semiseparable matrices: linear systems}, vol.~1, JHU Press, 2007.

\bibitem{wahba1975smoothing}
{\sc G.~Wahba}, {\em Smoothing noisy data with spline functions}, Numer. Math., 24 (1975), pp.~383--393.

\bibitem{wahba1990spline}
{\sc G.~Wahba}, {\em Spline models for observational data}, SIAM, 1990.

\bibitem{XFMC2024}
{\sc Y.~Xu, X.~Fang, B.~Mu, and T.~Chen}, {\em An efficient implementation for regularized frequency response function and transient estimation}, IFAC-PapersOnLine, 58 (2024), pp.~127--132.

\bibitem{zhang2024asymptotic}
{\sc M.~Zhang, T.~Chen, and B.~Mu}, {\em Asymptotic properties of generalized maximum likelihood hyper-parameter estimator for regularized system identification}, in 2024 IEEE 63rd Conference on Decision and Control (CDC), IEEE, 2024, pp.~4961--4966.

\bibitem{Zorzi2024}
{\sc M.~Zorzi}, {\em A second-order generalization of tc and dc kernels}, IEEE Trans. Automat. Control, 69 (2024), pp.~3835--3848.

\bibitem{Z24}
{\sc M.~Zorzi}, {\em A second-order generalization of {TC} and {DC} kernels}, IEEE Trans. Automat. Control, 69 (2024), pp.~3835--3848.

\bibitem{ZC18}
{\sc M.~Zorzi and A.~Chiuso}, {\em The harmonic analysis of kernel functions}, Automatica, 94 (2018), pp.~125--137, \url{https://doi.org/https://doi.org/10.1016/j.automatica.2018.04.015}.

\end{thebibliography}
\end{document}